\numberwithin{equation}{section}
\def\g{\gamma}
\def\eps{\varepsilon }
\newcommand\R{\mathbb R}
\def\g{\gamma}
\def\eps{\varepsilon}
\newcommand\br{\begin{remark}}
\newcommand\er{\end{remark}}
\newcommand\bp{\begin{pmatrix}}
\newcommand\ep{\end{pmatrix}}
\newcommand\be{\begin{equation}}
\newcommand\ee{\end{equation}}
\newcommand\ba{\begin{equation}\begin{aligned}}
\newcommand\ea{\end{aligned}\end{equation}}
\newcommand{\bap}{\begin{app}}
\newcommand{\eap}{\end{app}}
\newcommand{\begs}{\begin{exams}}
\newcommand{\eegs}{\end{exams}}
\newcommand{\beg}{\begin{example}}
\newcommand{\eeg}{\end{exaplem}}
\newcommand{\bpr}{\begin{proposition}}
\newcommand{\epr}{\end{proposition}}
\newcommand{\bt}{\begin{theorem}}
\newcommand{\et}{\end{theorem}}
\newcommand{\bc}{\begin{corollary}}
\newcommand{\ec}{\end{corollary}}
\newcommand{\bl}{\begin{lemma}}
\newcommand{\el}{\end{lemma}}
\newcommand{\bd}{\begin{definition}}
\newcommand{\ed}{\end{definition}}
\newcommand{\brs}{\begin{remarks}}
\newcommand{\ers}{\end{remarks}}
\newtheorem{theo}{Theorem}[section]
\newtheorem{exams}[theo]{Examples}
\numberwithin{equation}{section}
\newcommand{\sgn}{\text{\rm sgn}}
\newtheorem{theorem}{Theorem}[section]
\newtheorem{proposition}[theorem]{Proposition}
\newtheorem{corollary}[theorem]{Corollary}
\newtheorem{lemma}[theorem]{Lemma}
\newtheorem{definition}[theorem]{Definition}
\newtheorem{example}[theorem]{Example}
\newtheorem{remark}[theorem]{Remark}
\newcommand{\reff}[1]{\ref{#1}}
\newcommand{\RM}{\mathbb{R}}
\title{
Metastability of solitary roll wave solutions of the St. Venant
equations with viscosity}
\author{\sc \small
Blake Barker\thanks{Indiana University, Bloomington, IN 47405;
bhbarker@indiana.edu: Research of B.B. was partially supported
under NSF grants no. DMS-0300487 and DMS-0801745.}
~~~
Mathew A. Johnson\thanks{Indiana University, Bloomington, IN 47405;
matjohn@indiana.edu: Research of M.J. was partially supported by an NSF Postdoctoral Fellowship under NSF grant DMS-0902192.}
~~~
L.Miguel Rodrigues\thanks{Universit\'e de Lyon, Universit\'e Lyon 1,
Institut Camille Jordan, UMR CNRS 5208, 43 bd du 11 novembre 1918,
F - 69622 Villeurbanne Cedex, France; rodrigues@math.univ-lyon1.fr: Stay of M.R. in Bloomington was supported by 
Frency ANR project no. ANR-09-JCJC-0103-01.}
~~~
Kevin Zumbrun\thanks{Indiana University, Bloomington, IN 47405;
kzumbrun@indiana.edu:
Research of K.Z. was partially supported
under NSF grants no. DMS-0300487 and DMS-0801745.}
}
\begin{document}

\maketitle


\begin{center}
{\bf Keywords}: solitary waves; St. Venant equations; convective instability.
\end{center}

\begin{center}
{\bf 2000 MR Subject Classification}: 35B35.
\end{center}


\begin{abstract}
We study by a combination of numerical and analytical
Evans function techniques the stability of solitary wave solutions
of the St. Venant equations for viscous shallow-water flow down an
incline, and related models.
Our main result is to exhibit examples of
metastable solitary waves for the St. Venant equations,
with stable point spectrum indicating coherence of the wave
profile but unstable essential spectrum indicating
oscillatory convective instabilities shed in its wake.
We propose a mechanism based on ``dynamic spectrum'' of the
wave profile, by which a wave train of solitary pulses can
stabilize each other by de-amplification of convective instabilities
as they pass through successive waves.
We present numerical time evolution studies supporting these conclusions,
which bear also on the possibility of stable periodic solutions close
to the homoclinic.
For the closely related viscous Jin-Xin model, by contrast,
for which the essential spectrum is stable,
we show using the stability index of Gardner--Zumbrun
that solitary wave pulses are always exponentially unstable, possessing
point spectra with positive real part.
\end{abstract}

\newpage
\tableofcontents

\bigbreak

\section{Introduction }\label{intro}

Roll waves are a well-known phenomenon
occurring in shallow water flow down an inclined ramp,
generated by competition between gravitational force and friction
along the bottom.  Such patterns have been used to model phenomena in
several areas of the engineering literature, including landslides, river and spillway flow, and the topography of
sand dunes and sea beds and their stability properties have been much studied
numerically, experimentally, and by formal asymptotics; see \cite{BM} and references therein.
Mathematically, these can be modeled as traveling-wave solutions
of the St. Venant equations for shallow water flow;
see \cite{D,N1,N2} for discussions of existence in
the inviscid and viscous case.
They may take the form either of periodic wave-trains
or, in the long-wavelength limit of such wavetrains,
of {\it solitary pulse-type waves}.

Stability of periodic roll waves has been considered recently
in \cite{N1,N2,JZN}.
Here, we consider stability of the limiting solitary wave solutions.
This appears to be of interest not only for its relation to stability of
nearby periodic waves (see \cite{GZ,OZ1} and discussion in \cite{JZN}),
but also in its own right.
For, persistent solitary waves are a characteristic feature of shallow
water flow, but have been more typically modeled by dispersive equations
such as Boussinesq or KdV.

Concerning solitary waves of general second-order hyperbolic-parabolic conservation or balance laws such
as those examined in this paper,
all results up to now \cite{AMPZ1,GZ,Z2} have indicated the such solutions exhibit unstable
point spectrum.  Thus, it is not at all clear that an example with stable point spectrum should exist.
Remarkably, however, for the equations considered in \cite{N2,JZN} we
obtain examples of solitary wave solutions that are {\it metastable}
in the sense that they have stable point spectrum, but unstable essential
spectrum of a type corresponding to convective instability.
That is, the perturbed solitary wave propagates relatively
undisturbed down the ramp, while shedding oscillatory instabilities in its wake,
so long as the solution remains bounded by an arbitrary constant.
The shed instabilities appear to grow exponentially forming the typical time-exponential oscillatory
Gaussian wave packets associated with essential instabilities; see the
stationary phase analysis in \cite{OZ1}.  Presumably these would ultimately
blow up according to a Ricatti equation in the standard way; however, for sufficiently small
perturbation, this would be postponed to arbitrarily long time.  For the time length considered
in our numerics, we can not distinguish between the linear and nonlinear growth.

We confirm this behavior numerically by
time evolution and Evans function analysis.
We derive also a general stability index similarly as in \cite{GZ,Z2,Go,Z4}
counting the parity of the number of unstable eigenvalues, hence
giving rigorous geometric necessary conditions for stability
in terms of the dynamics of the associated traveling-wave ODE;
see Section \ref{s:stabindex}.
Using the stability index, we show that homoclinics of the closely related
viscous Jin--Xin model are {\it always unstable}; see Section
\ref{s:JXequations}.
We point out also an interesting and apparently so far unremarked connection
between the stability index, a Melnikov integral for the homoclinic orbit
with respect to variation in the wave speed,
and geometry of bifurcating limit cycles (periodics), which
leads to a simple and easily evaluable rule of thumb connecting stability of
homoclinic orbits as traveling wave solutions of the associated
PDE to stability of an enclosed equilibrium as a solution of the
traveling-wave ODE; see Remark \ref{odestab}.

The examples found in this paper are notable as the first examples
of a solitary-wave solution of a second-order hyperbolic--parabolic
conservation or balance law for which the point spectrum is stable.
This raises the very interesting question whether an example could
exist with both stable point and essential spectrum, in which case
linearized and nonlinear orbital stability (in standard, not metastable sense)
would follow in standard fashion by the techniques of
\cite{MaZ1,MaZ4,Z2,JZ,JZN,LRTZ,TZ1}.
In Section \ref{s:num}
we present examples for various modifications
of the turbulent friction parameters (different from the ones
in \cite{N2,JZN}) that have stable point spectrum and ``almost-stable''
essential spectrum, or stable essential spectrum and ``almost-stable''
point spectrum, suggesting that one could perhaps find 
a spectrally stable example somewhere between.  However, we have up to now not
been able to find one for the class of models considered here.
Whether this is just by chance, or whether the conditions of stable
point spectrum and stable essential spectrum are somehow mutually
exclusive\footnote{For a similar dichotomy in a related, periodic context,
see \cite{OZ1}.}
is an extremely interesting open question, especially
given the fundamental interest of solitary waves in both theory
and applications.

Finally, we note that similar metastable phenomena have been observed
by Pego, Schneider, and Uecker \cite{PSU} for the related
fourth-order diffusive Kuramoto-Sivashinsky model
$$
u_t+\partial_x^4u+\partial_x^2u+\frac{\partial_x u^2}{2}=0,
$$
an alternative model for thin film flow down a ramp.
They describe asymptotic behavior of solutions of this model as dominated
by trains of solitary pulses, going on to state:
``Such dynamics
of surface waves are typical of observations in the inclined film problem
\cite{CD}, both
experimentally and in numerical simulations of the free-boundary Navier--Stokes
problem describing this system.''

That is, both our results and the results of \cite{PSU} seem to illustrate
a larger and somewhat surprising phenomenon deriving from
physical inclined thin-film flow
of asymptotic behavior dominated by trains of pulse solutions
which are themselves unstable.
We discuss this interesting issue, and the relation to stability of
periodic waves, in Section \ref{s:dyn},
suggesting a heuristic mechanism by which convectively unstable solitary
waves, when placed in a closely spaced array, can stabilize each other
by de-amplification of convected signals as they cross successive
solitary wave profiles.
We quantify this by the concept of ``dynamic spectrum'' of a solitary wave,
defined as the spectrum of an associated wave train obtained by periodic
extension, appropriately defined, of a solitary wave pulse.
In some sense, this notion captures the essential spectrum of the \textit{non-constant} portion of the profile.
This is in contrast to usual notion of essential spectrum which is governed
by the (often unstable) constant limiting states.
For the waves studied here, the dynamic spectrum is stable, suggesting strongly that long-wave periodic trains are stable as well.
This conjecture has since been verified in \cite{BJNRZ1,BJNRZ2}.

\medskip
{\bf Acknowledgement}: Thanks to Bj\"orn Sandstede for
pointing out the reference \cite{PSU}, and to Bernard Deconink for
his generous help in guiding us in the use of the SpectrUW package
developed by him and collaborators.
K.Z. thanks Bj\"orn Sandstede and Thierry Gallay for interesting conversations
regarding stabilization of unstable arrays, and thanks the Universities of Paris 7 and 13 for their warm
hospitality during a visit in which this work was partly carried out.
The numerical Evans function computations performed
in this paper were carried out
using the STABLAB package developed by Jeffrey Humpherys with
help of the first and last authors.

\section{The St. Venant equations with viscosity}\label{s:SVequations}

\subsection{Equations and setup}
The 1-d viscous St. Venant equations approximating shallow water flow on an
inclined ramp are
\ba \label{eqn:1Econslaw}
h_t + (hu)_x&= 0,\\
(hu)_t+ (h^2/2F+ hu^2)_x&= h- u|u|^{r-1}/h^s +\nu (hu_x)_x ,
\ea
where $1\le r\le 2$, $0\le s\le 2$, and
where $h$ represents height of the fluid, $u$ the
velocity average with respect to height,
$F$ is the Froude number, which here is the
square of the ratio between speed of the fluid and speed of gravity waves,
$\nu={\rm Re}^{-1}$ is a positive nondimensional viscosity
equal to the inverse of the Reynolds number,
the term $u|u|^{r-1}/h^s$ models turbulent friction along the bottom,
and the coordinate $x$ measures longitudinal distance along the ramp.
Furthermore, the choice of the viscosity term $\nu(hu_x)_x$ is motivated by the formal derivations
from the Navier Stokes equations with free surfaces.
Typical choices for $r$, $s$ are $r=1$ or $2$ and $s=0$, $1$, or $2$;
see \cite{BM,N1,N2} and references therein.
The choice considered in \cite{N1,N2,JZN} is $r=2$, $s=0$.

Following \cite{JZN}, we restrict to positive velocities $u>0$ and consider \eqref{eqn:1Econslaw} in
Lagrangian coordinates, in which case \eqref{eqn:1Econslaw} appears as
\ba \label{eqn:1conslaw}
\tau_t - u_x&= 0,\\
u_t+ ((2F)^{-1}\tau^{-2})_x&=
1- \tau^{s+1} u^r +\nu (\tau^{-2}u_x)_x ,
\ea
where $\tau:=h^{-1}$ and $x$ now denotes a Lagrangian marker
rather than physical location.  Note that working in Lagrangian coordinates was crucial
in developing the large-amplitude damping estimates necessary for the nonlinear stability
analysis of periodic roll waves in \cite{JZN}.

\br\label{pos}
\textup{
Since we study waves propagating down a ramp, there is no real
restriction in taking $u>0$.  However, we must remember to discard
any anomalous solutions which may arise for which negative values
of $u$ appear.
}
\er

We now consider the existence of solitary traveling wave solutions of \eqref{eqn:1conslaw}.
Denoting $U:=(\tau, u)$, consider a traveling-wave solution $U=\bar{U}(x -ct)$,
of \eqref{eqn:1conslaw}, which is seen to satisfy the traveling-wave ODE
\be \label{e:second_order}
\begin{aligned}
-c\bar\tau' - \bar u'&= 0,\\
-c \bar u'+ ((2F)^{-1}\bar\tau^{-2})'&=
1- \bar\tau^{s+1} \bar u^r +\nu (\bar\tau^{-2}\bar u')' .
\end{aligned}
\ee
Integrating the first equation of \eqref{e:second_order}
and solving for $\bar u= u(\bar\tau):= q-c\bar\tau$,
where $q$ is the resulting
constant of integration, we obtain a second-order scalar profile equation
in $\bar\tau$ alone:
\be \label{e:profile}
c^2 \bar\tau'+ ((2F)^{-1}\bar\tau^{-2})'=
1- \bar\tau^{s+1} (q-c\bar\tau)^r -c\nu (\bar\tau^{-2}\bar\tau ')' .
\ee

Note that nontrivial traveling-wave solutions of speed $c=0$
do not exist in Lagrangian coordinates,
as this would imply $\bar u\equiv q$, and
\eqref{e:profile} would reduce to a scalar first-order equation
\be \label{e:zerocprofile}
  \bar\tau'= F\bar\tau^3(\bar\tau^{s+1} q^r-1) ,
\ee
which has no nontrivial solutions with $\bar\tau>0$.  For non-zero values of $c$, however,
it is seen numerically that \eqref{e:profile} admits homoclinic orbits corresponding to
solitary wave solutions of \eqref{eqn:1conslaw}.  See also \cite{N1,N2}.  Furthermore, generically these are seem to emerge from
a hopf bifurcation from the equilibrium state, which generates a family of periodic orbits which terminate into
the bounding homoclinic.  

\subsection{The subcharacteristic condition and Hopf bifurcation}\label{s:hopf}

To begin, we consider the stability of the equilibrium solutions \eqref{eqn:1conslaw}.
First note that \eqref{eqn:1conslaw} is of $2\times 2$ viscous relaxation type
and can be written in the form
\be\label{relax}
U_t+H(U)_x-\nu (B(U)U_x)_x=\bp0\\g(U)\ep,
\ee
where $g(U)=1-\tau^{s+1} u^r$ and
\[
H(U)=\bp -u\\ \frac{1}{2F\tau^2}\ep,\quad B(u)=\left(
                                                 \begin{array}{cc}
                                                   0 & 0 \\
                                                   0 & \tau^{-2} \\
                                                 \end{array}
                                               \right).
\]
At equilibrium values $u=\tau^{-(s+1)/r}>0$,
the inviscid version of \eqref{relax}, obtained by setting $\nu=0$
in \eqref{relax}, has hyperbolic characteristics equal
to the eigenvalues $\pm \frac{\tau^{-3/2}}{\sqrt{F}}$ of $dH$, and
equilibrium characteristic $\big(\frac{s+1}{r}\big) \tau^{-(r+s+1)/r}$
equal to $df_*(\tau)$, where $f_*(\tau):=  g(\tau, u_*(\tau))$
and $u_*(\tau):= \tau^{-(s+1)/r}$ is defined by
$g(\tau, u_*(\tau))=0$. The subcharacteristic condition, i.e., the condition
that the equilibrium characteristic speed lie between the hyperbolic
characteristic speeds, is therefore
\be\label{subeq}
df_*(\tau)
=\left(\frac{s+1}{r}\right) \frac{u_*(\tau)}{\tau}
=\left(\frac{s+1}{r}\right) \tau^{-(r+s+1)/r}
\leq\frac{\tau^{-3/2}}{\sqrt{F}}=c_s(\tau).
\ee
For the common case $r=2$, $s=0$ considered in \cite{N2,JZN}, this reduces to $F\leq4$.  For $2\times 2$ relaxation systems such as the above, the subcharacteristic condition is exactly the condition that constant solutions be linearly stable, as may be readily verified by computing the dispersion relation using the Fourier transform.

\br\label{alternate}
Let $q$ and $c$ be fixed. Then the equilibria of the traveling-wave ODE \eqref{e:profile} are given by zeros of $g_*(\tau)=g(\tau, q-c\tau )$ (corresponding to $u_*(\tau)=q-c\tau$). Notice the relation
$dg_*(\tau)= g_\tau(\tau)-c g_u(\tau)= g_u(\tau)(df_*(\tau)-c)$.
Since $g_u<0$, by the relaxation structure of \eqref{relax}, the key quantity $df_*-c$
has a sign opposite to the one of $dg_*$.
Therefore, provided there is no multiple root of $g_*$, the sign of $df_*(\tau_0)-c$
alternates among the equilibria $\tau_0$ of the traveling-wave ODE \eqref{e:profile}.
\er

For the full system \eqref{relax} with viscosity $\nu> 0$, we now show some Fourier computations relating the subcharacteristic condition with both stability and Hopf bifurcation.
Let speed $c$ be fixed.  Then rewriting  \eqref{relax} 
in a moving frame $(x-ct,t)$ and linearizing about a constant solution
$(\tau, u)\equiv (\tau_0, u_0)$, with $\tau_0>0$ and $u_0=u_*(\tau_0)>0$, we obtain
the linear system $U_t + A_0 U_x=B_0 U + C_0U_{xx}$, where\footnote{Recall that $\tau_0^{s+1}u_0^r=1$.}
\be\label{consys}
A_0=\begin{pmatrix} -c & -1\\ -c_s^2 & -c\end{pmatrix}, \qquad
B_0=\begin{pmatrix} 0 & 0\\ -(s+1)/\tau_0 & -r/u_0\end{pmatrix}, \qquad
C_0=\begin{pmatrix} 0 & 0\\ 0 & \nu\tau_0^{-2}\end{pmatrix},
\ee
and $c_s=c_s(\tau_0)=\frac{\tau_0^{-3/2}}{\sqrt{F}}$ denotes the positive hyperbolic characteristic of the inviscid problem (see above). The corresponding dispersion relation between eigenvalue $\lambda$ and frequency $k$ is readily seen to be given by the polynomial equation
\ba \label{disrel}
0&=\det(\lambda I + A_0 ik-B_0+k^2C_0)\\
&=
\lambda^2 +
\left[\frac{r}{u_0}-2ick+\frac{\nu k^2}{\tau_0^2}\right]\lambda+
ik\left[\frac{s+1}{\tau_0}-\frac{cr}{u_0}+ik (c^2-c_s^2)-\frac{c\nu k^2}{\tau_0^2}\right].
\ea
Inspecting stability, we Taylor expand the dispersion relation \eqref{disrel} about an eigenvalue $\lambda$ with $\Re (\lambda)=0$.
Note that if $\lambda$ is such an eigenvalue corresponding to a non-zero frequency $k$, we may assume $\lambda=0$
by replacing $c$ with $c-\Im(\lambda)/k$. Let us then focus on $k=0$ or $\lambda=0$ and $k\neq0$.

Eignevalues corresponding to $k=0$ are $0$ and $-r/u_0$. Solving the dispersion equation \eqref{disrel} about $(\lambda,k)=(0,0)$ with $\lambda(k)$ and differentiating, one finds
$$
\lambda'(0)=-i\left[\left(\frac{s+1}{r}\right)\frac{u_0}{\tau_0}-c\right]
$$
and
$$
\frac12\lambda''(0)
=\frac{u_0}{r}\left[\left(i\lambda'(0)+c\right)^2-c_s^2\right]\\
=\frac{u_0}{r}\left[\left(\left(\frac{s+1}{r}\right)\frac{u_0}{\tau_0}\right)^2-c_s^2\right].
$$
so that failure of the subcharacteristic condition implies spectral instability. Furthermore, it follows that any solitary wave solution of \eqref{eqn:1conslaw} for which the limiting value $\lim_{x\to\pm\infty}\tau(x)$ violates the subcharacteristic condition \eqref{subeq} will have unstable essential spectrum.

We now look for eigenvalues $\lambda=0$ for $k\neq0$ by solving
\be \label{e:linprof}
0=\frac{s+1}{\tau_0}-\frac{cr}{u_0}+ik (c^2-c_s^2)-\frac{c\nu k^2}{\tau_0^2}.
\ee
Such a $k$ exists if and only if $c=c_s$ and $\left(\frac{s+1}{r}\right)\frac{u_0}{\tau_0}>c_s$,
and then the solutions are $\pm k_H$ with
$$
k_H=\sqrt{\frac{\tau_0^2r}{c_s\nu u_0}\left[\left(\frac{s+1}{r}\right)\frac{u_0}{\tau_0}-c_s\right]}.
$$
Now, solving the dispersion equation \eqref{disrel} about $(\lambda,k)=(0,\pm k_H)$ with $\lambda(k)$ and differentiating, one finds
\be \label{Hopftransverse}
\lambda'(\pm k_H)=\frac{2c_s\nu}{\tau_0^2}\frac{i k_H^2}{r/u_0+\nu k_H^2/\tau_0^2\mp2ic_sk_H}\notin i\R
\ee
yielding again instability. See Figure \ref{ConstantSpec} for a typical depiction of the triple degeneracy of the essential spectrum at $\lambda=0$ for an unstable constant state when $c=c_s$.

\begin{figure}[htbp]
\begin{center}
\includegraphics[scale=.45]{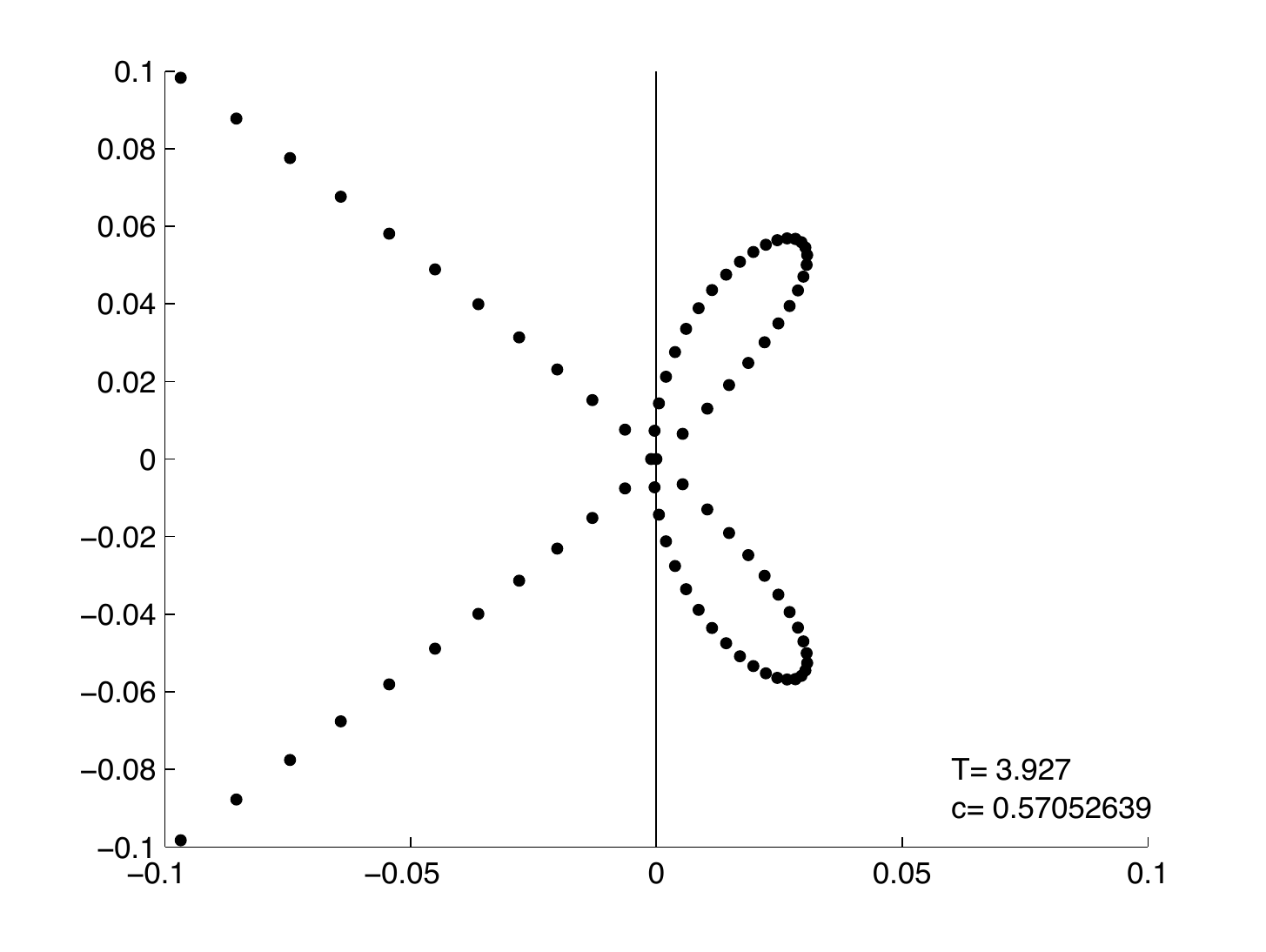}
  \caption{The essential spectrum for the unstable equilibrium solution $(\tau_0,u_0)$ of \eqref{e:profile} at a Hopf bifurcation $c=c_s$ is shown, where
  here we take $u_0=0.96$, $q=u_0+c/u_0^2$, $\nu=0.1$, $r=2$, $s=4$, $F=6$, and $c=c_s=0.57052639$.  Notice the triple degeneracy
  of the $\lambda=0$ eigenvalue in this case.}
   \label{ConstantSpec}
\end{center}
\end{figure}

Actually equation \eqref{e:linprof} is the linearization of the profile equation \eqref{e:profile} (with $q=u_0+c\tau_0$) about $\tau_0$, expressed in Fourier variables, so that previous pragraph translates to the {\it Hopf bifurcation conditions} for the generalized St. Venant equations \eqref{eqn:1conslaw}
\be\label{e:hopfcond}
c=c_s(\tau_0)= \frac{\tau_0^{-3/2}}{\sqrt{F}}
\quad \hbox{\rm and }\;
df_*(\tau_0)>c_s(\tau_0).
\ee
Moreover $k_H$ is the limiting frequency associated to the Hopf bifurcation and relation \eqref{Hopftransverse} will yield instability for close-by periodic travelling-waves generated through the Hopf bifurcation.  See \cite{JZN} for an alternate derivation of these bifurcation conditions.

\br\label{unstableregime}
\textup{
Experiments of \cite{N2,BJNRZ1,BJNRZ2} in case $r=2$, $s=0$ indicate that, when $F>4$, there is a smooth family of periodic waves parametrized by period, which increase in amplitude as period is
increased, finally approaching a limiting homoclinic orbit as period goes to infinity; see \cite{N2,BJNRZ1,BJNRZ2}. Likewise, in the study of a related artificial viscosity model with $r=2$, $s=0$
\cite{HC}, periodic and homoclinic solutions {\it were found in the regime of unstable constant solutions, $F>4$}. More precisely, in \cite{HC} this very configuration and generation of the periodics and homoclinics were found analytically near where the nonlinear center and saddle point coalesce. The authors were then able to analytically follow these configurations to obtain a global description of phase space.
}
\er

\br\label{repellor}
\textup{
Let us further examine the linearized profile equation. For this equation, if $c(df_*(\tau_0)-c)<0$
then the equilibrium $\tau_0$ is a saddle point ; whereas if $c(df_*(\tau_0)-c)>0$
then if $c(c^2-c_s(\tau_0)^2)<0$ the equilibrium $\tau_0$ is a repellor, either of spiral or real type, and if $c(c^2-c_s(\tau_0)^2)>0$ the equilibrium $\tau_0$ is an attractor, either of spiral or real type.  In particular, by the alternating property, the repellor case describes behavior at a single equilibrium $\tau_1$ trapped within a homoclinic (asymptotic to $\tau_0$) if and only if there stands $c(c^2-c_s(\tau_1)^2)<0$. For the example wave considered in Section \ref{s:num} we have $r=2$, $s=0$, $c=.78$, $F=6$, $q=1.78$, $\tau_0=1$ and $c_s(\tau_0)=1/\sqrt{2F}<.289<.78$ at the saddle equilibrium, but, at the enclosed equilibrium $\tau_1<1/2$, $c_s(\tau_1)=\tau_1^{-3/2}/\sqrt{2F}> .816>.78$.
}
\er

\section{Analytical stability framework}\label{s:framework}

\subsection{Linearized eigenvalue equations and convective instability}\label{s:friction}

We now begin our study of the stability of a given homoclinic solution $U(x,t)=\bar U(x-ct)$
satisfying $\lim_{x\to \pm \infty}=U_0$ to localized perturbations in $L^2(\RM;\RM^2)$.  To this
end, notice that
changing to co-moving coordinates, linearizing about $\bar U=(\bar \tau,\bar u)$, and taking
the Laplace transform in time we obtain the linearized eigenvalue equations
\ba \label{e:homlinII}
\lambda \tau-c\tau' - u'&= 0,\\
\lambda u-cu'
-(\bar \tau^{_-3}(F^{-1}- 2\nu \bar u_x)\tau)' &=
-(s+1)\bar \tau^s\bar u^r \tau
- r\bar \tau^{s+1}\bar u^{r-1} u
+\nu (\bar \tau^{-2}u')' .
\ea
As we are considering localized perturbations, we consider the $L^2$ spectrum of the eigenvalue
equations \eqref{e:homlinII}.  Further, we say the underlying solitary wave $\bar U$ is spectrally stable
provided that the $L^2$ spectrum of \eqref{e:homlinII} lies in the stable half
plane $\{\Re\lambda\leq 0\}$ and is spectrally unstable otherwise.

In order to analyze the spectral stability of the underlying solution $\bar U$, we consider the point
and essential spectrum of the linearized equations \eqref{e:homlinII} separately.
Here, we define the point spectrum in the generalized sense of \cite{PW2}
(including also resonant poles) as zeros of an associated Evans function (description just below),
and stability of the point spectrum as nonexistence of such zeros with nonnegative real part, other than a single
translational eigenvalue always present at frequency $\lambda=0$.
Equivalently \cite{Sat}, this notion of generalized point spectrum
can be described in terms of usual eigenvalues
of the linearized operator with respect to an appropriate weighted
norm; see \cite{TZ2} for further discussion.
Furthermore, we define the essential spectrum in the usual complementary sense: that is,
as the relative complement of the point spectrum (in the generalized sense described above) in the $L^2$ spectrum
of the linearized equations \eqref{e:homlinII}.

Concerning the structure of the essential spectrum, 
we recall a  classical theorem of Henry \cite{He,GZ} which states that the essential
spectrum of the linearized operator \eqref{e:homlinII} about the wave both includes and is bounded to the right
in the complex plane
by the right envelope of the union of the essential spectra of the linearized operators about the constant
solutions 
corresponding to the left and right end states $\lim_{x\to\pm\infty}U(x)$, which in our case
agree with the common value $U_0$.  This in turn may
be determined by a Fourier transform computation as in Section \ref{s:hopf}.
Consulting our earlier computations, we see that in the common case
$r=2$, $s=0$, constant solutions are {\it unstable} in the region of
existence of solitary waves, and hence solitary waves in this case always have
{\it unstable essential spectrum} (see Remark \ref{unstableregime}), and
hence are linearly unstable with respect to standard (e.g., $L^p$ or $H^s$) norms.

As discussed in \cite{PW2,Sat}, however, such instabilities are often
of ``convective'' nature, meaning that growing perturbations are simultaneously
swept away from, or ``radiated'' from the solitary wave profile, which
itself remains, at least to linear order, intact.
This corresponds to the situation that generalized
 {point spectrum}, governing
near-field behavior, is stable, while essential spectrum, governing
far-field behavior is unstable.
The phenomenon of convective instability can
%
%
 be captured
at a linearized level by the introduction of an appropriate weighted norm
\cite{Sat,PW1}, with respect to which the essential spectrum is shifted
into the negative half-plane and waves are seen (by the Hille--Yosida
Theorem) to be linearly time-exponentially stable.
Alternatively, see the pointwise description 
obtained by stationary phase estimates in \cite{OZ1}
of the far-field behavior as of an oscillatory
time-exponentially growing Gaussian wavepacket convected with respect
to the wave.  (See also \cite{AMPZ2}.)

Next, we continue our discussion by developing the necessary tools to analyze the point spectrum
of \eqref{e:homlinII}.  Notice that, as stated in the introduction, spectral stability of {\it both} point and essential spectrum,
should it occur, can be shown to imply linearized and nonlinear stability
in the standard time-asymptotic sense
by the techniques of  \cite{MaZ1,MaZ3,MaZ4,LRTZ,TZ1,JZN}.
However, as also noted in the introduction, we have so far not found a case in
which these conditions coexist.

\subsection{Construction of the Evans function}\label{s:evans_framework}

In order to describe the point spectrum (defined above in the generalized sense), we
outline here the construction of the Evans function corresponding to the
linearized eigenvalue equations \eqref{e:homlinII}.  First, notice that setting $W:=(\tau, u, \bar \tau^{-2} u')^T$, and recalling that
$\bar u=(q-c\bar \tau)$, we may write \eqref{e:homlinII} as a first-order system of the form $W'=A(x,\lambda)W$,
which is suitable for an Evans function analysis, where
\be\label{homAII}
A:=\bp
\lambda/c& 0 & -\bar \tau^2/c\\
0 & 0 &  \bar \tau^2\\
\frac{(s+1)\bar \tau^s (q-c\bar \tau)^r -\bar \alpha_x-\bar \alpha \lambda/c}{\nu} &
\frac{\lambda + r\bar \tau^{s+1}(q-c\bar \tau)^{r-1}}{\nu} & \frac{-c\bar \tau^2+
\bar \alpha \bar \tau^2/c}{\nu}
\ep,
\ee
\be\label{alphaII}
\bar \alpha:= \bar \tau^{-3}(F^{-1}+ 2c\nu \bar \tau_x),
\qquad
\bar \alpha_x = -3 \bar \tau^{-4}\bar \tau_x (F^{-1}+2 c\nu \bar \tau_x)
+2\bar \tau^{-3}c \nu \bar \tau_{xx}.
\ee
As with the essential spectrum, the point spectrum can essentially be described by analyzing the behavior
of the system \eqref{homAII} evaluated at the constant end state $U_0$ of the underling solitary wave.
To begin, we make the following definition which extends the notion of hyperbolicity of the
limiting first order system \eqref{homAII}.

\begin{definition}
Following \cite{AGJ,GZ},
for a generalized eigenvalue equation $W'=A(x,\lambda)W$, with
$$
\lim_{x\to \pm \infty}A(x, \lambda)=A_0(\lambda),
$$
we define the {\rm region of consistent splitting} to be the connected component
of real plus infinity of the set of $\lambda$ such that
$A_0$ has no center subspace, that is, the
stable and unstable subspaces $\mathcal{S}(A_0)$ and $\mathcal{U}(A_0)$
of $A_\pm$ are of constant dimensions summing to the dimension of the
entire space and maintain a spectral gap with respect to zero.
\end{definition}

We say that an eigenvalue problem satisfies consistent splitting if
the region of consistent splitting includes the entire punctured unstable half-plane
$\{\lambda\,|\,\Re \lambda\geq 0\}\setminus\{0\}$ of interest in the study of stability.\footnote{Recall, \cite{GZ,MaZ3,JZN},
that the special point $\lambda=0$ may be adjoined by analytic extension.}  The notion of
consistent splitting contains all the necessary information in order to define an Evans function.
Furthermore, the notion of consistent splitting has an immediate consequence concerning the
stability of the limiting end states.  

\begin{lemma}[\cite{AGJ,GZ,MaZ3}]
Consistent splitting is equivalent to spectral stability of
the constant solution $U\equiv U_0$, defined as nonexistence
of spectra of the constant-coefficient linearized operator
about $U_0$ on the punctured
unstable half-plane $\{\,\lambda\,|\,\Re \lambda\geq 0\}\setminus\{0\}$.
\end{lemma}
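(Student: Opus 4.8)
The plan is to reduce the asserted equivalence to the single identity
\[
\{\lambda\in\C:\ A_0(\lambda)\ \text{has no center subspace}\}\;=\;\C\setminus\sigma(L_0),
\]
where $A_0(\lambda)=\lim_{x\to\pm\infty}A(x,\lambda)$ is the limiting coefficient matrix of \eqref{homAII} evaluated at the end state (so $\bar\tau\equiv\tau_0$ and $\bar\alpha_x\equiv 0$), $L_0$ is the constant-coefficient operator obtained by linearizing \eqref{e:homlinII} about $U\equiv U_0$, and $\sigma(L_0)$ is its $L^2(\RM;\RM^2)$ spectrum. Granting this identity, both implications of the lemma follow from a short topological argument, so all the analytic content is in the identity, which is where the Fourier computation of Section~\ref{s:hopf} enters.

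For the topological step I would first observe that for $\lambda$ real and sufficiently large the dispersion relation \eqref{disrel} with $(\tau_0,u_0)$ the common end state has no real root $k$ --- the leading $\lambda^2$ term dominates its real part for every $k\in\RM$ --- so $A_0(\lambda)$ is hyperbolic on a half-line $[\Lambda,+\infty)$; since hyperbolicity is an open condition this half-line lies in one connected component of the hyperbolic set, and by definition the region of consistent splitting is exactly that component. Note also that absence of a center subspace forces $\dim\mathcal{S}(A_0(\lambda))+\dim\mathcal{U}(A_0(\lambda))$ to equal the full dimension, and that these dimensions are locally constant, hence constant on any connected component; so the clause ``constant dimensions summing to the full dimension'' in the definition is automatic and needs no separate check. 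Now if $U_0$ is spectrally stable then by the identity the punctured unstable half-plane $\{\Re\lambda\ge 0\}\setminus\{0\}$ lies in the hyperbolic set; being connected and containing $[\Lambda,+\infty)$, it is contained in the component of real plus infinity, i.e.\ in the region of consistent splitting, and consistent splitting holds. Conversely, if consistent splitting holds then the punctured unstable half-plane lies in the region of consistent splitting, hence in the hyperbolic set $\C\setminus\sigma(L_0)$, so $\sigma(L_0)$ misses the punctured unstable half-plane, which is precisely spectral stability of $U_0$.

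It remains to prove the identity. Since $L_0$ has constant coefficients on $\RM$ and the underlying system is of hyperbolic--parabolic relaxation type with nondegenerate diffusion ($\nu>0$ and $\tau_0>0$ keep the coefficient $\nu\tau_0^{-2}$ positive), its $L^2$ spectrum is computed by the Fourier transform exactly as in Section~\ref{s:hopf}: $\lambda\in\sigma(L_0)$ if and only if $\lambda$ solves the dispersion relation \eqref{disrel} for some real frequency $k$. On the other hand, freezing \eqref{homAII} at $U_0$ and seeking solutions $W=e^{\mu x}V$ of $W'=A_0(\lambda)W$, one gets a nontrivial such solution if and only if $\det(\mu I-A_0(\lambda))=0$; a direct computation using the defining relation $W=(\tau,u,\bar\tau^{-2}u')^{T}$ shows that this characteristic polynomial coincides, up to a nonzero constant factor, with the dispersion polynomial of \eqref{disrel} under the identification $\mu=ik$. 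Hence $A_0(\lambda)$ has a purely imaginary eigenvalue $ik$, $k\in\RM$, exactly when $\lambda$ lies on the dispersion curves, i.e.\ $A_0(\lambda)$ has a center subspace if and only if $\lambda\in\sigma(L_0)$, which is the identity.

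The main obstacle is not an estimate but the bookkeeping in this last step: one must verify that passing to the first-order form \eqref{homAII} --- which divides by $\nu$ and takes $\bar\tau^{-2}u'$ as an unknown --- neither creates nor loses solutions at the constant state, and that the resulting degree-three characteristic polynomial matches the dispersion polynomial of \eqref{disrel} up to a nonvanishing factor (this uses $c\ne 0$, valid since speed-zero profiles do not exist in Lagrangian coordinates), together with the standard fact that for the constant-coefficient operator $L_0$ the Fourier-symbol computation really does exhaust $\sigma(L_0)$ in the region of interest. All of this is routine; alternatively one may simply invoke the identity from \cite{AGJ,GZ,MaZ3} and give only the connectedness reduction above, the present setting differing from those references only in the explicit form of the coefficients.
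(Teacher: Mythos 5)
Your proposal is correct and follows essentially the same route as the paper: the paper's one-line proof rests on exactly the identification you prove, namely that $A_0(\lambda)$ has a pure imaginary eigenvalue $\mu=ik$ precisely when $\lambda$ lies on the linear dispersion curves of the constant-coefficient operator, i.e.\ in its $L^2$ (Fourier) spectrum. Your additional details --- matching the characteristic polynomial of \eqref{homAII} with \eqref{disrel} under $\mu=ik$ and the connectedness argument showing the hyperbolic punctured half-plane lies in the component of real plus infinity --- simply spell out what the paper delegates to the cited references.
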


\begin{proof}
This follows from the standard observation
\cite{GZ,ZH,MaZ3,Z1} that the set of $\lambda$ for
which $A_0(\lambda)$ possesses a pure imaginary eigenvalue
$\mu=ik$ is given by the linear dispersion relations
$\lambda=\lambda_j(k)$ for the associated constant-coefficient
equation $U_t=LU:= BU_{xx} -AU_x +CU$, defined by the roots of
$\det(\lambda I-k^2B-ikA+C)=0$.
\end{proof}

In the common case $r=2$, $s=0$, the Fourier transform computations of Section \ref{s:hopf} show that
constant solutions are unstable in the regime of existence of homoclinic orbits\footnote{At least, for those
generated through a Hopf bifurcation.  Recall, however, that all homoclinic orbits studied here are obtained
in this way.} and thus consistent splitting must fail in this case.  Thus, while the notion of consistent splitting
is sufficient for the construction of an Evans function, it is insufficient for our purposes.
For our applications then, we need a generalization of the notion of consistent splitting which
still incorporates the essential features necessary for the construction of an Evans function.  Such
a generalization was observed in \cite{PW2}, which we now recall.

%


\begin{definition}
We define the
{\rm extended region of consistent splitting} to be the largest superset
of the region of consistent splitting on which the total eigenspaces associated
with the stable (resp. unstable) subspaces of $A_0$ maintain a spectral
gap {\rm with respect to each other}, that is, for which the eigen-extensions
$\hat {\mathcal{S}}$ and $\hat { \mathcal{U}}$
of the stable and unstable subspaces
$\mathcal{S}(A_0)$ and $\mathcal{U}(A_0)$ defined for $\lambda$
near real plus infinity satisfy the condition that
the maximum real part of eigenvalues of $\hat { \mathcal{S}}$ is strictly
smaller than the minimum real part of of eigenvalues of $\hat{ \mathcal{U}}$.
\end{definition}

We say that an eigenvalue problem satisfies extended consistent splitting if the region of extended consistent splitting includes the entire punctured unstable half-plane\linebreak $\{\lambda|\Re \lambda\geq 0\}\setminus\{0\}$. Notice that the Fourier transform computations of Section \ref{s:hopf} show that extended consistent splitting holds
in a neighborhood of the origin\footnote{Away from the origin, extended consistent splitting can be checked numerically in the course
of performing the Evans function computations outlined in Section \ref{s:num}.  In particular, the numerical code
for the Evans computations is written in such a way that failure of extended consistent splitting signals an error.}
in the common case $r=2$, $s=0$ while, as mentioned above, consistent splitting does not.
Next, under the assumption of extended consistent splitting we show that it is possible to relate the large $x$ behavior of the
first order system $W'=A(x,\lambda)W$, with $A$ given as in \eqref{homAII}, to the behavior of limiting system
$W'=A_0(\lambda)W$, where the coefficient matrix $A_0$ is defined as above.

%


\begin{lemma}[\cite{GZ,ZH,Z1}]\label{limitingsystem}
Let $A(x,\lambda)$ be analytic in $\lambda$ as a function into $C^0(x)$,
exponentially convergent to $A_0$ as $x\to \pm \infty$,
and satisfying extended consistent splitting.
Then, there exist globally defined
bases $\{W_1^-, \dots, W_k^-\}$ and
$\{W_{k+1}^+, \dots, W_n^+\}$,
analytic in $\lambda$ on $\{\Re \lambda\geq 0\}\setminus\{0\}$,
spanning the manifold of solutions of $W'=A(x,\lambda )W$
approaching exponentially in angle as $x\to -\infty$ (resp.
$x\to +\infty$)
to $\hat {\mathcal{U}}(A_0)$ and $\hat {\mathcal{S}}(A_0)$,
where $k=\dim \hat {\mathcal{U}}(A_0)$ and $n=\dim U$; more precisely,
\be\label{asymptotics}
W_j^\pm \sim e^{\pm A_0 x}V_j^\pm
\quad \hbox{\rm as }\; x\to \pm \infty
\ee
for analytically chosen bases $V_j^\pm$ of
$\hat {\mathcal{U}}(A_0)$ and $\hat {\mathcal{S}}(A_0)$.
For the eigenvalue equations considered here, these bases
(both $W_j^\pm$ and $V_j^\pm$) extend
analytically to $\Re \lambda \le -\eta<0$.
\end{lemma}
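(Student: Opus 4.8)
The plan is to construct the desired bases by the standard conjugation-plus-ODE-dichotomy argument of Gardner--Zumbrun, but carried out relative to the \emph{eigen-extensions} $\hat{\mathcal S}(A_0)$ and $\hat{\mathcal U}(A_0)$ rather than to genuine stable/unstable subspaces, exactly as the extended-consistent-splitting hypothesis permits. First I would fix $\lambda$ in the punctured unstable half-plane and invoke the conjugation lemma of \cite{MaZ3} (equivalently the gap lemma of \cite{GZ,ZH,Z1}): since $A(x,\lambda)\to A_0(\lambda)$ exponentially as $x\to\pm\infty$, there is a change of variables $W = P_\pm(x,\lambda) Z$ with $P_\pm\to \Id$ exponentially, analytic in $\lambda$ and continuous in $x$, reducing $W'=A(x,\lambda)W$ on $x\gtrless 0$ to the constant-coefficient system $Z'=A_0(\lambda)Z$. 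This immediately yields, on each half-line, solutions of the form $P_\pm(x,\lambda)e^{A_0 x}V$ for $V$ in any chosen $A_0$-invariant subspace; choosing $V$ in the total eigenspace $\hat{\mathcal U}(A_0)$ (resp. $\hat{\mathcal S}(A_0)$) associated to the eigenvalues of largest (resp. smallest) real part gives solutions with the stated asymptotics \eqref{asymptotics}.

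The second step is the growth/decay separation that makes these solutions well defined as \emph{the} slow-growing or slow-decaying manifold. Here the strict spectral gap built into extended consistent splitting does the work: writing $\mu_u^{\min}$ for the minimum real part of eigenvalues of $\hat{\mathcal U}$ and $\mu_s^{\max}$ for the maximum real part of those of $\hat{\mathcal S}$, the hypothesis $\mu_s^{\max}<\mu_u^{\min}$ lets me pick an intermediate rate $\mu_s^{\max}<\eta_0<\mu_u^{\min}$ and characterize the span of the $W_j^-$ as exactly the set of solutions that are $o(e^{\eta_0 x})$ as $x\to+\infty$ after transport, and symmetrically for the $W_j^+$; this characterization is basis-independent and so makes the manifolds canonical. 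Analyticity in $\lambda$ on $\{\Re\lambda\ge 0\}\setminus\{0\}$ follows because $P_\pm$ is analytic there, the spectral projections onto $\hat{\mathcal S},\hat{\mathcal U}$ are analytic wherever the gap persists (Riesz projection / Dunford integral), and one can choose the spanning vectors $V_j^\pm$ analytically on the simply-connected region in question — a small lemma on analytic selection of bases, to which I would appeal or give a one-line contour-integral argument.

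The third step is the extension to $\Re\lambda\le-\eta<0$. This is where the special structure of the eigenvalue equations \eqref{e:homlinII}--\eqref{homAII} enters, rather than just abstract dichotomy theory: for this particular system the limiting matrix $A_0(\lambda)$ has, by the dispersion-relation computation of Section \ref{s:hopf}, a genuine spectral gap (no purely imaginary eigenvalues) not only on the punctured right half-plane but on a strip $\{\Re\lambda > -\eta\}$ for some $\eta>0$ — indeed the essential spectrum of the constant state, while touching the imaginary axis, curves back into $\Re\lambda<0$, so the relevant part of it is contained in $\{\Re\lambda\le 0\}$ away from a neighborhood of $0$; one checks directly from \eqref{disrel} that the pure-imaginary-eigenvalue locus $\lambda=\lambda_j(k)$ stays bounded away from any vertical line $\Re\lambda=-\eta$ for $\eta$ small, so ordinary (not gap-lemma) exponential dichotomies on each half-line apply and the bases continue analytically. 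I would simply note that on $\{\Re\lambda\le-\eta\}$ consistent splitting in the \emph{usual} sense holds for this system, so $\hat{\mathcal S}=\mathcal S$, $\hat{\mathcal U}=\mathcal U$, and the construction is classical.

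The main obstacle is the analytic-in-$\lambda$ dependence across the whole region $\{\Re\lambda\ge0\}\setminus\{0\}$ when only the \emph{extended} (not ordinary) splitting holds: one must make sure the eigen-extensions $\hat{\mathcal S},\hat{\mathcal U}$ themselves vary analytically, i.e. that no eigenvalue of $A_0$ crosses from the "$\hat{\mathcal S}$-group" to the "$\hat{\mathcal U}$-group," which is precisely the content of the spectral-gap condition in the definition of the extended region; away from the origin this must in general be \emph{verified}, and the excerpt's footnote already flags that numerically the Evans code monitors exactly this. So in the write-up I would state the analyticity as holding on the extended region of consistent splitting, which by the Section \ref{s:hopf} computations includes a neighborhood of the origin, and which for the waves under study is checked to include the rest of $\{\Re\lambda\ge0\}\setminus\{0\}$ during the numerical computations of Section \ref{s:num}.
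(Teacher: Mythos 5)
Your first two steps coincide with the paper's own argument: the conjugation lemma (the paper cites M\'etivier--Zumbrun \cite{MeZ}, described in \cite{Z1}) reduces the variable-coefficient system on each half-line to $Z'=A_0Z$, and the analytic choice of the $V_j^\pm$ is exactly Kato's lemma \cite{K,Z6} on globally analytic bases for the ranges of analytic projections, the projections being analytic wherever the group gap of extended consistent splitting persists. Up to the citation of the gap lemma in place of the conjugation lemma, this is the paper's proof.

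Your third step, however, rests on a false premise. You claim that for this system $A_0(\lambda)$ has no purely imaginary spatial eigenvalues on the punctured right half-plane and on a strip $\{\Re\lambda>-\eta\}$, so that ordinary consistent splitting holds there, $\hat{\mathcal S}=\mathcal S$, $\hat{\mathcal U}=\mathcal U$, and classical exponential dichotomies finish the extension. This fails exactly where the extension is needed: at $\lambda=0$ the dispersion relation \eqref{disrel} has the root $(\lambda,k)=(0,0)$, so $A_0(0)$ has a one-dimensional center subspace (this is stated explicitly in the proof of Lemma \ref{l:split}), and in the principal case $r=2$, $s=0$ the constant endstate is unstable in the regime where homoclinics exist, so the curves $\lambda_j(k)$ invade the open right half-plane and consistent splitting fails on part of $\{\Re\lambda\ge0\}\setminus\{0\}$ as well --- this is precisely why the paper introduces \emph{extended} consistent splitting in the first place, a point your own first paragraph acknowledges but your third contradicts. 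Even in the stable-essential-spectrum example $(r,s)=(1,1)$ the spectrum touches the imaginary axis at the origin, so no strip $\{\Re\lambda>-\eta\}$ is free of purely imaginary eigenvalues of $A_0$. The correct (and paper's) route for the last assertion of the lemma is not an ordinary dichotomy but a matrix perturbation expansion at $\lambda=0$: the degenerating (center) eigenvalue of $A_0(0)$ is simple and separated from the complementary group, so the eigenprojections onto the extended stable and unstable groups, and with them the bases $V_j^\pm$ and the conjugated solutions $W_j^\pm$, continue analytically through $\lambda=0$ and to $\Re\lambda\ge-\eta$ for some $\eta>0$. Without this perturbation argument your proposal does not establish the analytic extension past the origin.
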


\begin{proof}
The first, general result follows by the conjugation lemma of
\cite{MeZ} (see description, \cite{Z1}) asserting that exponentially
convergent variable-coefficient ODE on a half-line
 may be converted by an exponentially
trivial coordinate transformation to constant-coefficient ODE
$Z'=A_0 Z$, together with a lemma of Kato \cite{K,Z6} asserting
existence of globally analytic bases $\{V_j^\pm\}$
for the range of an analytic projection,
the eigenprojections being analytic due to spectral gap.
Extension to $\lambda=0$ (and thus $\Re \lambda \le -\eta<0$ for
some $\eta$) then follows by a matrix perturbation expansion at
$\lambda=0$ verifying analytic extension of the projectors.
See \cite{GZ,MaZ3,Z1} for the now-standard details of this argument.
\end{proof}

By Lemma \ref{limitingsystem} then, solutions of the full linearized system $W'=A(x,\lambda)W$
behave asymptotically like the solutions of the limiting system $W'=A_0(\lambda)W$.
Thus, if $\lambda$ is in the point spectrum (in the usual $L^2$ sense) of \eqref{e:homlinII} the
corresponding vector solution of full linearized system should decay as $x\to\pm\infty$
corresponding to a connection between the stable and unstable subspaces of $A_0$.
For the linearized St. Venant eigenvalue equations considered here,
computations like those of Section \ref{s:hopf} show that on the region of consistent splitting the limiting coefficient matrix
$A_0(\lambda)$ 
has a one-dimensional stable subspace\footnote{This dimension can be found near the origin or near real plus infinity
and is, by definition, constant in the region of consistent splitting.}  and hence
Lemma \ref{limitingsystem} applies with $k=2$ and $n=3$; as with extended consistent splitting, this is something that can be
numerically verified away from $\lambda=0,\infty$.  It follows that $\lambda$ belongs to the
point spectrum of \eqref{e:homlinII} provided the spaces $\textrm{span}\{W_1^-,W_2^-\}$ and $\textrm{span}\{W_3^+\}$
intersect non-trivially, which leads us to our definition of the Evans function.

\begin{definition}\label{evansdef}
For the linearized St. Venant eigenvalue equations, we
define the Evans function as
\be\label{eq:evansdef}
D(\lambda):=
\frac{\det (W_1^-, W_2^-,W_3^+)|_{x=0}}
{\det(V_1^-,V_2^-,V_3^+)},
\ee
where $V_j^\pm$ are the limiting directions of $W^\pm_j$.  Furthermore, we say $\lambda$
belongs to the point spectrum of \eqref{e:homlinII} provided $D(\lambda)=0$.
\end{definition}

\br
\textup{
Note that definition \eqref{eq:evansdef} is independent of
the choice of basis elements $V_j^\pm$, $W_j^\pm$, so
long as \eqref{asymptotics} holds.
Thus, it is not necessary in practice to choose the $V_j^\pm$ analytically,
or even continuously.
This
way of normalizing
the Evans function for homoclinics by considering
the limiting vectors $V_{j}^\pm$ seems to not only be new to the literature,
but also quite convenient from a computational standpoint; see calculations below.
}
\er

When consistent splitting holds, the basis elements $W_j^\pm$
span the manifolds of solutions decaying as $x\to +\infty$
(resp. $x\to -\infty$), and so vanishing of $D$ is associated
with existence of an exponentially decaying eigenfunction and
roots $\lambda$ correspond to standard eigenvalues.
When extended consistent splitting holds but consistent splitting
does not, then some of the basis elements may be nondecaying or even
growing at infinity, and so roots correspond rather to ``resonant poles''.
See \cite{PW2} for further discussion.  Thus, as mentioned before, our definition of point
spectrum is in a generalized sense including all roots (including resonant poles) of the Evans function as defined in Definition \ref{evansdef}.
However, notice that if one has good essential spectrum and bad point (Evans) spectrum, then the unstable Evans spectra
lies in the region of consistent splitting (excepting the degenerate case $\lambda=0$) and hence corresponds to a spectral instability to localized perturbations.  In particular,
in this case is is possible to conclude nonlinear instability of the solitary wave by arguments like that of \cite{Z8,Z9}.
On the other hand, if one has both bad essential and bad point (Evans) spectrum, then the instability depends
on which component of the spectra has larger real part; if the unstable point spectrum is farther out, then
again this corresponds to an unstable localized eigenvalue, while if the essential spectrum is farther out, then we are
essentially in the convective instability case briefly discussed in the introduction.  The latter convective instability
will be discussed further in Section \ref{s:dyn}.

\subsection{The stability index}\label{s:stabindex}
Next, we use the Evans function to seek conditions which imply the existence of unstable point (Evans) spectrum.
First, however, we need the following lemma which guarantees that our construction of the Evans function
in the previous section is valid along the positive real axis.

\bl\label{l:split}
Let $\tau_0$ be an equilibrium solution of the profile ODE \eqref{e:profile}.
Then, the open positive real axis is contained in the set of
consistent splitting for the linearized St. Venant equations
\eqref{e:homlinII} if and only if $cdf_*(\tau_0) \le c^2$ or $c^2-c_s^2\le 0$.
In any case, there exists a neighborhood of the origin
which lies in the region of extended consistent splitting.
\el

\br
Notice by Remark \ref{repellor}, the condition $c(df_*(\tau_0)-c)<0$ is equivalent to the equilibrium $\tau_0$ being a saddle point of the
profile ODE \eqref{e:profile}.  It follows that, given any homoclinic solution of \eqref{e:profile},
the set $(0,\infty)$ is contained in the set of consistent splitting and hence any positive root of the
Evans function corresponds to a spectral instability of the underlying wave to localized perturbations.
\er

\begin{proof}
Recall that the rightmost boundary of the essential spectrum is precisely the boundary of the
region of consistent splitting.  Thus, to establish the first claim it is sufficient to prove that the essential spectrum does
not intersect the positive real axis if and only if
$cdf_*(\tau_0) \le c^2$ or $c^2-c_s^2\le 0$.

This in turn follows by the Fourier analysis of Section \ref{s:hopf}.
From \eqref{disrel} we see that that the essential spectrum $\lambda=\lambda(k)$ intersects the
real axis if and only if
\[
-2ck\lambda+k\left(\frac{s+1}{\tau_0}-\frac{cr}{u_0}-\frac{c\nu k^2}{\tau_0^2}\right)=0.
\]
When $k\neq 0$ such roots must satisfy
\begin{equation}\label{e:realLambda}
\lambda=\frac{1}{2c}\left(\frac{s+1}{\tau_0}-\frac{cr}{u_0}-\frac{c\nu k^2}{\tau_0^2}\right)
\end{equation}
and hence a sufficient condition that $\lambda \le 0$ is that
\[
\frac{s+1}{c\tau_0} \le \frac{r}{u_0}.
\]
Rearranging using the fact that $u_0=\tau_0^{-(s+1)/r}$, we find that this last condition is equivalent to $\frac{1}{c}df_*(\tau_0)\le 1$, which
is equivalent to $c(df_*(\tau_0)-c)\le 0$.
Therefore, under this sign condition, we conclude that the only real roots $\lambda$ of \eqref{disrel} that can occur when $k\neq 0$
must be negative.  But, when $k=0$ we have the explicit roots $\lambda=0$ and $\lambda=\frac{-r}{u_0}$, which are also non-negative and hence
yields the desired contradiction.

Likewise, for $k\neq 0$, substituting \eqref{e:realLambda}
into the real part of the dispersion relation \ref{disrel} yields the relation
\[
\frac{\lambda}{2}\left(\frac{s+1}{c\tau_0}+\frac{r}{u_0}+\frac{\nu k^2}{\tau_0^2}\right)=k^2(c^2-c_s^2),
\]
which is a contradiction if $\lambda>0$ and $c(df_*(\tau_0)-c)>0$ and $c^2-c_s^2\le 0$.  Thus, we also see that the open positive real axis
lies in the region of consistent splitting whenever $c^2-c_s^2\le 0$.

If, on the other hand, $c(df_*(\tau_0)-c)> 0$ and $c^2-c_s^2> 0$,
then, necessarily $|df_*|>|c_s|$, hence the subcharacteristic condition
is violated and there exist essential spectra with strictly real part.
Fixing $\tau_0$, hence $df_*$ and $c_s$, now vary the wave speed $c$.
For $c=c_s$, we have as already noted the Hopf configuration
shown in Figure \ref{ConstantSpec} of a triple degeneracy with three
distinct roots $k=0,\pm k_*$ at $\lambda=0$, and (by the sufficient condition
already established), the positive real axis is contained in the region
of consistent splitting.
Recalling that changes in the wave speed $c$ by an amount $\Delta c$
corresponds to a translation in the roots of the dispersion
relation along the imaginary axis by an amount proportional to $i(\Delta c)k$,
we find that for $c^2\gtrless c_s^2$, the triple singularity breaks up
into three distinct crossings of the imaginary axis, one at $\lambda=0$
and two at $\pm ik_*(c-c_s)$. For $c^2<c_s^2$, we have already shown
that the real axis remains in the region of consistent splitting.
See Figure \ref{venant_evan_homoclinic}(e),which shows an example
of the essential spectrum in a case when $c>df_*(\tau_0)>c_s>0$.  
When $c(df_*(\tau_0)-c)>0$ and 
$c^2>c_s^2$,  therefore, the roots move in the opposite direction
along the real axis, and the essential spectrum must intersect
the positive real axis as in Figure \ref{ConstantSpec-bad}.
(Alternatively, we may see this by expansion of the dispersion relation
in the vicinity of $\lambda=0$ and $c=c_s$.)
This verifies the first claim.

Finally, notice that expanding the dispersion relation \eqref{disrel} with the substitution $\mu=ik$, it follows that the
matrix $A(x;0)\equiv A_0(0)$ from the Evans system about the equilibrium solution $(\tau_0,u_0)$ with $\lambda=0$
possess a one-dimensional center subspace.  As the spectral gap between the stable and unstable subspaces of $A_0(0)$
persist under small perturbation, it follows that the corresponding matrices $A_0(\lambda)$ satisfy extended consistent splitting
for $|\lambda|$ sufficiently small, which verifies the final claim.
\end{proof}


\begin{figure}[htbp]
\begin{center}
\includegraphics[scale=.45]{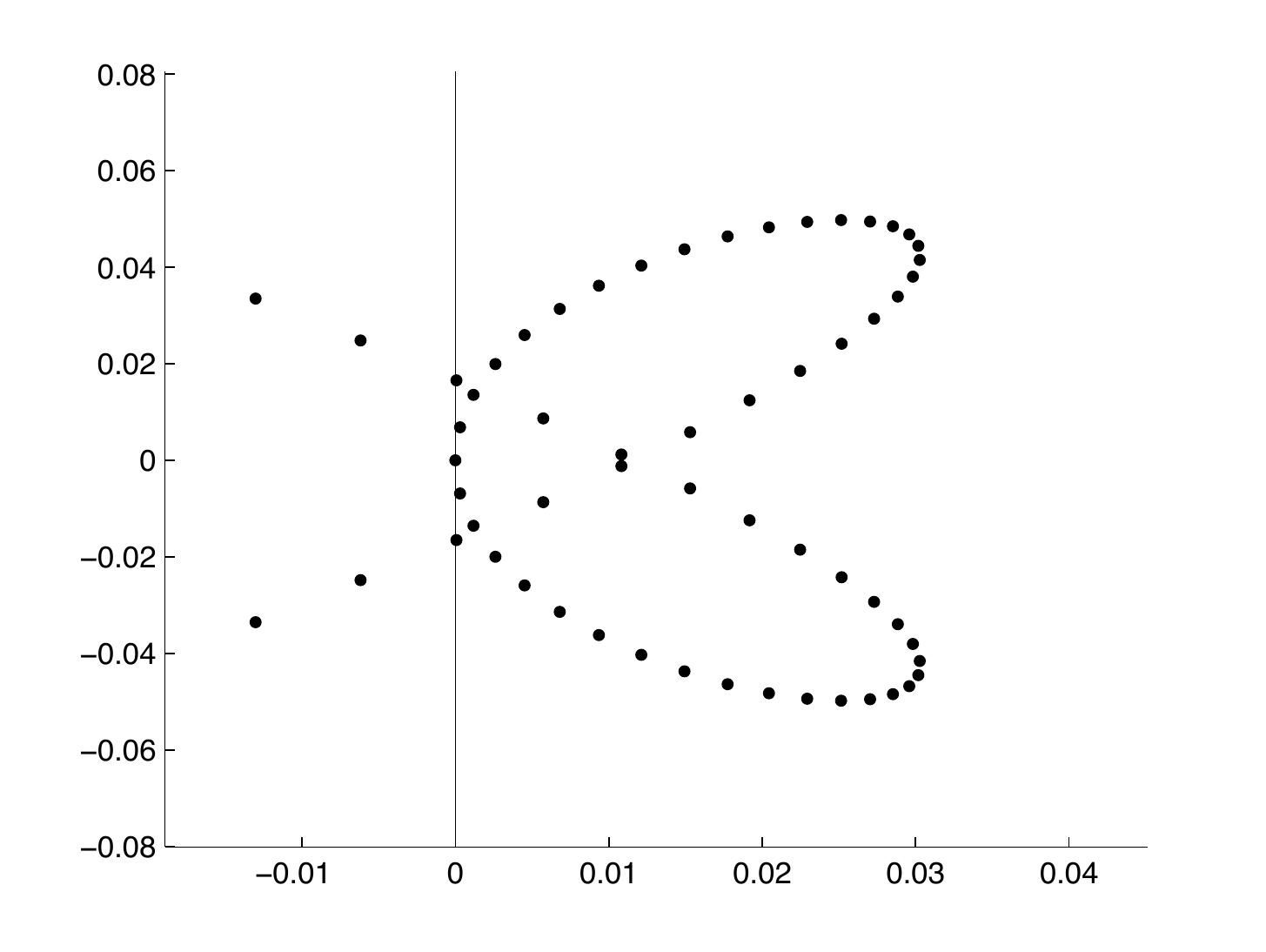}
  \caption{This figure depicts the essential spectrum for the constant state $\tau_0\approx 1.08507$ for the common case $(r,s)=(2,0)$
  when $c=0.53$, $q\approx 1.53509$, $\nu=0.1$, and $F=6$.  In particular, it is easily seen that $c(df_*(\tau_0)-c)>0$ and $c^2>c_s^2\approx(0.361)^2$ in this case.
  Notice that the essential spectrum intersects the positive real axis}
   \label{ConstantSpec-bad}
\end{center}
\end{figure}

Following \cite{GZ} then, we can analyze the Evans function on a set of the form $(-\eps,\infty)$ for some $\eps>0$.
In particular, by comparing high and low (real) frequency behavior of the Evans function, may compute a mod-two stability index giving
partial stability information in terms of geometric properties of the
flow of the traveling-wave ODE: specifically,
the sign of a Melnikov derivative with respect to wave speed.  To this end, we begin by analyzing the
high (real) frequency asymptotics of the Evans function.

\bl
$\sgn D(\lambda)\to 1$ as $\lambda \to +\infty$ for $\lambda$ real.
\el

\begin{proof}
This may be checked by homotopy
to an easy case, followed by direct computation.
Or, we may observe that asymptotic behavior is determined by
the principal part
$$
\bp \frac{\lambda}{c}& 0 & 0\\
0 & 0 &  \bar \tau^2\\
* & \frac{\lambda}{\nu} & 0\\
\ep
$$
of $A(x,\lambda)$, hence by the Tracking Lemma
of \cite{GZ,ZH,MaZ3},
$\det (W_1^-,W_2^-,W_3^+)_{x=0}$ is asymptotic to a positive multiple
of $\det (V_1^-,V_2^-,V_3^+)$.
See the computations of Section \ref{s:track} for details.
\end{proof}

In order to derive an instability index, we compare the above high frequency behavior with the low-frequency
behavior near $\lambda=0$.  First, notice that differentiating \eqref{e:second_order} with respect to $x$ implies
that $U_x$ satisfies the linearized eigenvalue equations \eqref{e:homlinII} with $\lambda=0$ and hence $D(0)=0$.
It follows negativity of $D'(0)$ is sufficient for the existence of a real, positive, unstable
element of the point (Evans) spectrum of \eqref{e:homlinII}.  Notice, however, that $D'(0)<0$ always implies either the existence
of an unstable localized eigenvalue (if this point lies in the region of consistent splitting), or else
the presence of essential spectrum on the positive real axis.  Therefore, given that $D'(0)<0$ a conclusion
can always be made concerning the stability of the underlying solitary wave: a time-exponential instability
to localized perturbations or else a convective-time-oscillatory instability (as described in the introduction and in Section \ref{s:dyn} below).
It turns out that the sign of this
derivative is intimately related to the underlying geometry of the ODE flow induced by the profile ODE.
Indeed, for values of $(c,q)$ such that there is a smooth saddle equilibrium
at $(\tau_0(c,q),0)$ in the traveling wave ODE, let
$\bar U^+(c,q;x)$ parameterize the stable manifold at
$(\tau_0,0)$ and $\bar U^-(c,q;x)$ the unstable manifold.
Define the {\it Melnikov separation function }
\be\label{mel}
d( c,q)=
\det \bp \bar \tau_x & \bar \tau^+- \bar \tau^-\\
(\bar \tau_x)' & (\bar \tau^+- \bar \tau^-)'\\
\ep|_{x=0},
\ee
and notice that
this represents a signed distance of $\bar U^-$ from $ \bar U^+$ along a normal section
at $\bar U|_{x=0}$ oriented in the direction of $\bar U_x^\perp|_{x=0}$ with respect to the right hand rule; see Figure~\ref{melnikov}.  As
seen in the next lemma, the sign of the quantity $D'(0)$ is determined precisely by whether $d(c,q)$ is an
increasing or decreasing function of the wave speed $c$.


\begin{figure}[htbp]
\begin{center}
\includegraphics[scale=.45]{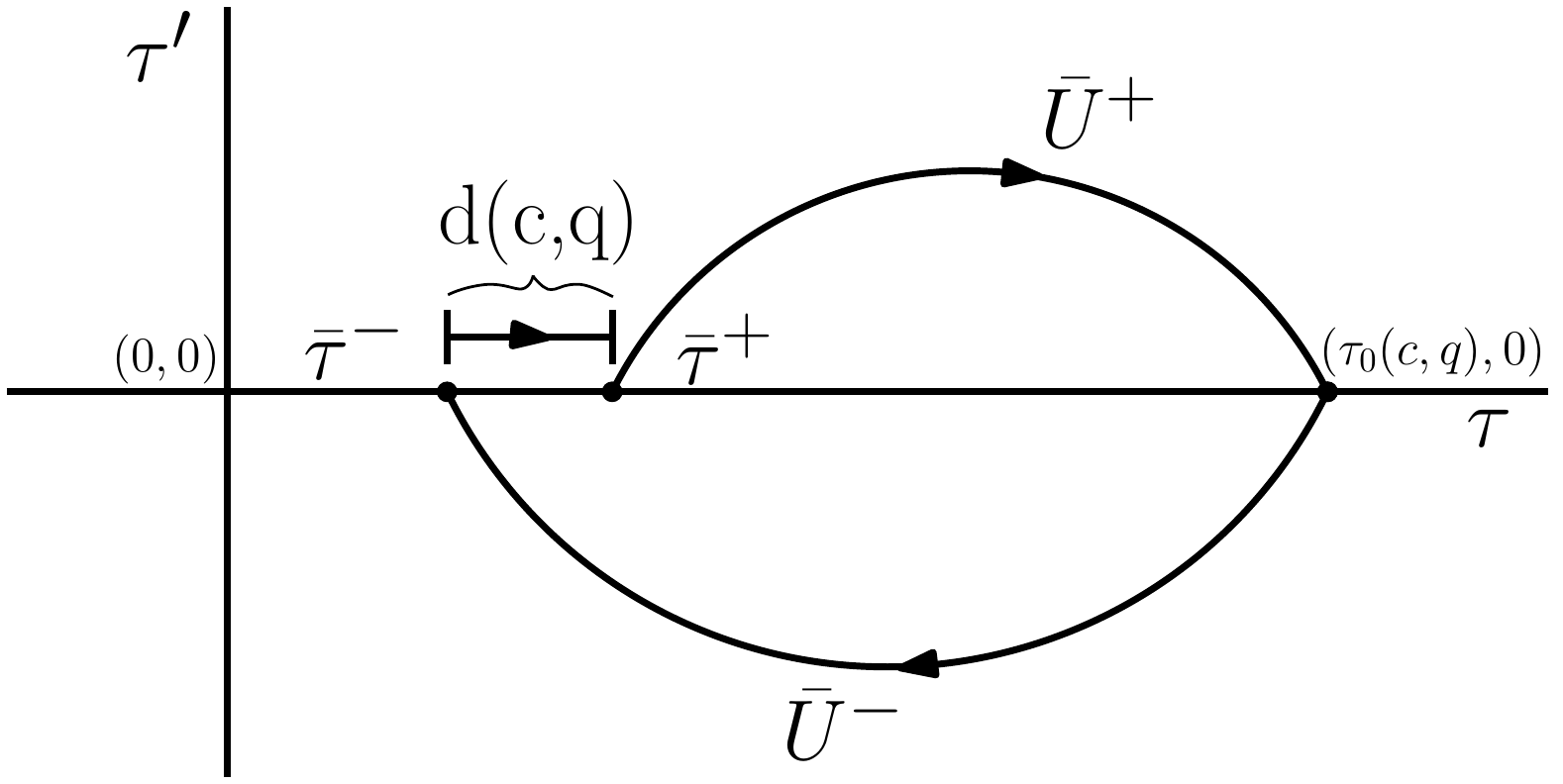}
  \caption{Depiction of the geometric interpretation of the Melnikov separation function $d(c,q)$, in the special case where $\bar U_x(0)=(0,1)^T$. Here, note that, by a direct calculation, $d(c,q)<0$.}
   \label{melnikov}
\end{center}
\end{figure}

\bl\label{l:lowfreq}
If $d(c,q)=0$, then for the linearized St. Venant eigenvalue equations \eqref{e:homlinII} about the
corresponding homoclinic orbit of \eqref{e:profile} we have $\sgn D'(0)=- \sgn \partial_c d(c,q)$.
\el

\begin{proof}
This follows by a computation similar to those of \cite{GZ,Z2,Go}.
First, notice that since $\lambda=0$ is a simple
translational eigenvalue, we have $D(0)=0$ and we may take without
loss of generality  $W_1^-=W_3^+=\bar U'$
at $\lambda=0$.  Furthermore, a matrix perturbation argument at $\lambda=0$ shows the unstable subspace
of the asymptotic matrix $A_0(\lambda)$ can be analytically extended to a neighborhood of the origin,
with limiting values at $\lambda=0$ given by the direct sum of the unstable subspace of $A_0(0)$ and
the vector $\left(1,-g_\tau(u_0,\tau_0)g_u(u_0,\tau_0)^{-1},0\right)^T$, with $g$ as in \eqref{relax}
It follows then that we can choose the vector $W_2^-(\cdot,0)$ to be asymptotically constant with
\[
V_2^-=\lim_{x\to -\infty}W_2^-(x,0)=\left(1,\frac{-(s+1)u_0}{r\tau_0},0\right)^T=\left(1,-df_*(\tau_0),0\right)^T,
\]
where the last equality follows from the relation $\tau_0^{s+1}u_0^r=1$.
Moreover, assuming the homoclinic connection goes clockwise in the $(\tau,\tau')$ variables
and that $\mu_+<0$ and $\mu_->0$ denote the decaying modes of the asymptotic matrix $A_0(0)$ at $\pm\infty$, respectively,
we have
\begin{align*}
V_1^-(0) &= \left(-\tau_0,c\tau_0,c\mu_-\tau_0^{-1}\right)^T,\\
V_3^+(0) &= \left(\tau_0,-c\tau_0,-c\mu_+\tau_0^{-1}\right)^T,
\end{align*}
where we have used the fact that $u=-c\tau$ for the vectors $W_1^-(\cdot,0)$ and $W_3^+(\cdot,0)$.  Therefore,
\begin{equation}\label{Vdet}
\begin{aligned}
\det(V_1^-,V_2^+,V_3^+)|_{\lambda=0}&=
\det\left(
       \begin{array}{ccc}
         -\tau_0 & 1 & \tau_0 \\
         c\tau_0 & -df_*(\tau_0) & -c\tau_0 \\
         c\mu_-\tau_0^{-1} & 0 & -c\mu_+\tau_0^{-1} \\
       \end{array}
    \right)\\
&=c(\mu_--\mu_+)\det\left(
                      \begin{array}{cc}
                        -1 & 1 \\
                        c & -df_*(\tau_0) \\
                      \end{array}
                    \right)\\
&=c(\mu_--\mu_+)\left(df_*(\tau_0)-c\right).
\end{aligned}
\end{equation}
which is non-zero by assumption.

Continuing, since $\det(V_1^-,V_2^-,V_3^+)$ is non-vanishing and $\det (W_1^-, W_2^-,W_3^+)|_{x=0, \lambda=0}=0$
we have
\[
D'(0)=
\frac{\partial_\lambda \det (W_1^-, W_2^-,W_3^+)|_{x=0}}
{\det(V_1^-,V_2^-,V_3^+)}|_{\lambda=0}
\]
and hence, to complete the proof then, we must calculate the numerator.  Using the Leibnitz rule, we immediately find that
\begin{equation}\label{derevans}
\begin{aligned}
\partial_\lambda \det (W_1^-, W_2^-,W_3^+)|_{\lambda=0}&=
\det (\partial_\lambda W_1^-, W_2^-,W_3^+)|_{\lambda=0} +
\dots +
\det (W_1^-, W_2^-,\partial_\lambda W_3^+)|_{\lambda=0} \\
&=
\det (\bar U_x, W_2^-,\partial_\lambda W_3^+-\partial_\lambda W_1^-)|_{\lambda=0},
\end{aligned}
\end{equation}
where the vectors $\partial_\lambda W_j|_{\lambda=0}$ satisfy at
\begin{equation}\label{jordanvec}
\partial_\lambda W_j'|_{\lambda=0}=A(x,0)\partial_\lambda W_j|_{\lambda=0}+\bar U'
\end{equation}
for $j=1,3$.  Furthermore, differentiating the stable and unstable manifold solutions of the traveling-wave
ODE at the endstate $(\tau_0,u_0)$ (chosen in equilibrium) with respect to the wave speed $c$,
we find
\[
\partial_\lambda W_1^-|_{\lambda=0}= -\left(
                                                 \begin{array}{c}
                                                   \partial_c\bar \tau^- \\
                                                   \partial_c\bar u^- \\
                                                   \bar\tau^{-2}\partial_c \bar u^- \\
                                                 \end{array}
                                               \right)
,\quad \partial_\lambda W_3^+|_{\lambda=0}=
-\left(
                                                 \begin{array}{c}
                                                   \partial_c\bar \tau^+ \\
                                                   \partial_c\bar u^+ \\
                                                   \bar\tau^{-2}\partial_c \bar u^+ \\
                                                 \end{array}
                                               \right).
\]
Note this is a reflection of the similar role of $c$ and $-\lambda$
in the structure of the linearized traveling wave equation and in the linearized eigenvalue equation, respectively.

Next, setting $\lambda=0$ in \eqref{e:homlinII} and integrating from $x=-\infty$ to $x=0$
implies
\[
(u+c\tau)|_{x=0}=c-df_*(\tau_0)
\]
for $(\tau,u)=((W_2^-)_1,(W_2^-)_2)$, the first two components of the vector $W_2^-(\cdot,0)$.  Similarly,
recalling \eqref{jordanvec} it follows that
\[
(u+c\tau)|_{x=0}=\int_{\pm\infty}^0\left(\bar{u}_x+c\bar \tau_x\right)dx=\left(\bar u+c\bar\tau\right)|_{x=\pm\infty}^0
\]
for $(\tau,u)=(\bar\tau^\pm,\bar u^{\pm})$.  Therefore, returning to \eqref{derevans}, we find that adding $\frac{1}{c}$
times the second row to the first yields
\begin{equation}
\begin{aligned}\label{Wdet}
\partial_\lambda \det (W_1^-, W_2^-,W_3^+)|_{x=0,\lambda=0}&=
\bp 0 & \frac{1}{c}\left(c-df_*(\tau_0)\right) & 0\\
\bar u_x & * & \partial_c(\bar u^--\bar u^+)\\
\bar \tau^{-2}(\bar u_x)' & * & \bar \tau^{-2}\partial_c(\bar u^--\bar u^+)'\\
\ep|_{x=0}\\
&=
\frac{1}{c}\left(df_*(\tau_0)-c\right)
\det\bp
\bar u_x  & \partial_c(\bar u^--\bar u^+)\\
\bar \tau^{-2}(\bar u_x)'  &
\bar \tau^{-2}\partial_c(\bar u^--\bar u^+)'
\ep|_{x=0}\\
&=
-c\bar \tau^{-2}(0)(df_*(\tau_0)-c)
\det\bp
\bar \tau_x  & \partial_c(\bar \tau^+-\bar \tau^-)\\
(\bar \tau_x)'  &
\partial_c(\bar \tau^+-\bar \tau^-)'
\ep|_{x=0}\\
&=
-c\bar \tau^{-2}(0)(df_*(\tau_0)-c)\partial_c d(c,q).
\end{aligned}
\end{equation}
The proof is now complete by dividing \eqref{Wdet} by \eqref{Vdet} and noting that $(\mu_--\mu_+)>0$.
\end{proof}

\bc\label{stabcor}
$\sgn \partial_c d(c,q)<0$ is necessary for stability of point spectrum of the corresponding homoclinic,
corresponding to an even number of roots with positive real part.
\ec

\br
\textup{
In the stable case considered in \cite{N2}, for which periodics appear
as $c$ is decreased from the homoclinic speed $c_{\rm hom}$,
we see that periodics exist in the situation that $d<0$, which
should be visible from the phase portrait.
That is, the lower orbit coming from the saddle point should pass
{\it outside} the upper branch.
Likewise, we can check the sign for Jin--Xin by looking at the phase
portrait for nearby periodic case.
In fact, we can compute directly that $\partial_s d>0$ for the
Jin--Xin case, as done in Section \ref{s:JXequations} below, yielding instability
for that model.
}
\er


\br\label{odestab}
\textup{
Note that $\partial_c d<0$ implies for $c<c_{\rm hom}$, where $c_{\rm hom}$ denotes the
homoclinic wavespeed, that
the unstable manifold at the saddle equilibrium spirals inward,
either toward an attracting equilibrium in which case periodics
don't exist, or toward an attracting
periodic cycle, in which case the enclosed equilibrium is a repellor.
Likewise, $\partial_c d>0$ implies for $c<c_{\rm hom}$ that
the stable manifold at the saddle equilibrium spirals inward in backward
$x$ either toward a repelling equilibrium in which case periodics
don't exist, or toward a repelling periodic cycle, in which case the
enclosed equilibrium is an attractor in forward $x$.
That is, generically, the stability condition
$\partial_c d<0$ is associated with the property
that the enclosed equilibrium be a {\it repellor},
so long as decrease in speed from $c_{\rm hom}$ is associated with
existence of periodics.
(If, rather, increase in speed is associated
with existence of periodics, then we reach the reverse conclusion that
Evans stability correspods to the property that the enclosed equilibrium
be an attractor.)
By Remark \ref{repellor}, therefore, we expect stability (of point spectra)
of homoclinics in the St. Venant case studied in \cite{N2}.
}
\er

As described in Remark \ref{odestab}, it
is a curious fact that stability in temporal variable $t$ of the homoclinic
as a solution of PDE \eqref{eqn:1conslaw}
is associated with stability in spatial variable $x$
of nearby periodics as solutions of the traveling-wave ODE \eqref{e:profile},
and, thereby,\footnote{Assuming the generic case where there is only one
limit cycle generated from increasing or decreasing the wave speed $c$,
with an associated return map that is strictly attracting or strictly
repelling.}
instability of the equilibrium contained within the periodics.
The latter is also the condition that the homoclinic itself be stable
in forward $x$-evolution as a solution of the traveling-wave ODE.
It is not clear whether this is a coincidence, or plays a deeper role
as a mechanism for stability.
However, once we understand the existence theory (itself nontrivial,
but often observed numerically), we may make conclusions readily about
stability just by examining stability or instability of the enclosed
equilibrium as a solution of the traveling-wave ODE.




\subsection{The viscous Jin-Xin equations}\label{s:JXequations}

While of theoretical interest, unfortunately the stability index $\partial_c d$ derived above appears to be analytically incomputable for the general St. Venant equations
considered here (although, one can use the ``rule of thumb" described above).
Next, we consider as simpler related example the viscous Jin--Xin model
\ba \label{eq:toy}
\tau_t - u_x&= 0,\\
u_t -c_s^2\tau_x&= -f(\tau) -u +u_{xx}
\ea
for which we can directly calculate the value of $\partial_c d$: as we will see, Lemma \ref{stabcor} holds in this simpler setting as well.  Considering traveling waves in the moving coordinate frame $x-ct$ gives traveling-wave ODE
\[
(c^2-c_s^2) \tau'=
-f(\tau) +c\tau- q -c \tau '' ,
\]
where $u=q-c\tau$.  Taking $c=c_s$, this equation is seen to be Hamiltonian and reduces to the nonlinear oscillator
\be \label{e:osc}
c \tau''= -f(\tau) +c\tau- q,
\ee
for which all periodics and the bounding homoclinic arise at once.
Indeed, this equation may be integrated via quadrature: multiplying by $\tau'$ and integrating, we obtain
\be\label{ham}
(c/2)(\tau')^2 = -F(\tau) +c\tau^2/2- q\tau +H,
\ee
$F:=\int f$,
which is of standard Hamiltonian form with ODE energy $H$ and effective potential energy $V(\tau;c,q)=F(\tau)-\frac{c}{2}\tau^2+q\tau$.
In particular, we see that the homoclinic wavespeed $c_{hom}$ is precisely $c_s$ given in \eqref{eq:toy}.

Denoting the roots of the equation $V'(\tau)=0$ as $\tau_j$, we have that orbits through the equilibrium solutions $(\tau_j,0)$
of \eqref{ham} line on the level sets $V(\tau)=H$, and correspond to homoclinic orbits provided that $V''(\tau_j)<0$.  In this case,
the orbits of \eqref{ham} can be implicitly defined via the relation
\be\label{orb}
\tau'= \pm \sqrt{ (2/c) (-F(\tau) +c\tau^2/2- q\tau +H)}
\ee
from which solutions of \eqref{eq:toy} may be obtained solving \eqref{orb} for $\tau$ as a function of $x$
and using that $u=q-c\tau$.
Furthermore, a direct calculation shows that the sign of $V''(\tau_j)=-df(\tau_j)+c$ alternates as $\tau_j$ increases,
and hence we find that the subcharacteristic condition is satisfied if and only if $V''(\tau_j)<0$.  It follows
that all non-trivial solitary wave solutions of \eqref{eq:toy} are stable to low-frequency perturbations, and hence
have stable essential spectrum.

%
%

%
%
%

The analysis of the point spectrum of the corresponding homoclinic orbits
is handled by the usual Evans function framework as described in Section \ref{s:evans_framework}.
To this end, notice that in co-moving coordinates the linearized equations are
\ba \label{eqn:cotoy}
\tau_t-c\tau_x - u_x&= 0,\\
u_t-cu_x  -c_s^2\tau_x&=
-df(\bar \tau) \tau - u +u_{xx} ,
\ea
where $\bar U=(\bar \tau, \bar u)$ denotes background profile.  Taking the Laplace transform in time
immediately leads to the linearized eigenvalue ODE
\ba \label{eqn:teig}
\lambda \tau-c\tau' - u'&= 0,\\
\lambda u-cu'  -c_s^2\tau'&=
-df(\bar \tau) \tau - u +u'' ,
\ea
%
governing the spectral stability of the underlying wave to localized perturbations.
In order to use the abstract Evans function setup of Section \ref{s:evans_framework},
we write the eigenvalue system \eqref{eqn:teig} as a first-order system of the form
$W'=A(x,\lambda)W$, where
\be
W:=\bp \tau \\ u\\u'\ep,
\qquad
A:=\bp
\lambda/c & 0 & -1/c\\
0 & 0 & 1\\
-c_s^2\lambda/c +df(\bar \tau)
& \lambda + 1  & c_s^2/c-c\\
\ep.
\ee
In order to apply Lemma \ref{limitingsystem}, and hence define the corresponding Evans function $D(\lambda)$
as in \eqref{eq:evansdef}, we must verify that we have extended consistent splitting in this case.


To obtain consistent splitting in the homoclinic case,
we must have stability of constant solutions at the endpoint $\tau=0$ from
which the homoclinic originates,
which as noted earlier is equivalent to
the subcharacteristic condition
$$
df_*(\tau_j)<c_s,
$$
where $f_*(\tau):=-u_*(\tau)= f(\tau)$
is determined by the value
$u_*(\tau)= -f(u) $ at equilibrium,
hence
$df_*(\tau)=df(\tau)$.
Thus, the subcharacteristic condition is
$df(\tau)<c_s$, and so as discussed above it is
satisfied at alternate roots.
(Recall that, by our previous analysis, instability
of the constant solution is necessary for Hopf bifurcation
and thus for existence of a nonlinear center and
bounding homoclinic.)
With this setup, we see that Lemma \ref{limitingsystem} applies to the eigenvalue system
corresponding to \eqref{eqn:teig} and hence we may define the Evans function $D(\lambda)$
as in \eqref{eq:evansdef}.  Furthermore, a direct calculation shows that the proof of Corollary \ref{stabcor}
carries over line by line to the viscous Jin--Xin case considered here.  It follows then that
we can determine the stability of the solitary wave by direct considerations of the Melnikov separation
function $d(c,q)$ defined in \eqref{mel}.  It turns out that due to the geometric nature of the function
$d(c,q)$ the sign of the derivative $\partial_c d$ is trivial to compute at the homoclinic.


Indeed, note that for general $c$ the traveling-wave ODE is
\be \label{e:osc2}
c \tau''= -f(\tau) +c\tau- q - (c^2-c_s^2)\tau'.
\ee
It follows that $H'= -(c^2-c_s^2)|\tau'|^2$, where $H$ is the the Hamiltonian defined as in \eqref{ham},
and hence, for $c>c_s$, the ODE flow decreases the Hamiltonian $H$.
Noting that $H$ is minimized at the equilibrium enclosed by the Homoclinic, it follows that we must
have $d>0$ for $0<c-c_s\ll 1$ and hence we conclude that $\partial_c d>0$.  By Corollary \ref{stabcor} then,
we conclude that the solitary waves of \eqref{eq:toy} always have unstable point spectrum.

This example, although much simpler than the full St. Venant equations, illustrates the power of being able to
relate the stability of the solitary wave as a solution of the governing PDE to information about Melnikov separation
function $d(c,q)$, an inherently geometric quantity encoding information about the ODE phase space.
For the St. Venant equations, however, we must resort to numerics to study the point spectrum of the linearized
operator.  This analysis is carred out in the following two sections.  As we will see, however, although we are not
able to analytically compute the stability index $\sgn\partial_c d(c,q)$ in this general case, our
numerical experiments are consistent with the rule of thumb relating
the PDE  stability to the stability of the enclosed equilibrium solution of the ODE.

\section{High-frequency asymptotics}\label{s:track}

Our next goal is to carry out a numerical Evans study of the point spectrum
for the linearized equations \eqref{e:homlinII} for homoclinic orbits
corresponding to various choices of the turbulent friction parameters $(r,s)$.
Indeed, later we will see that a winding number argument allows us to numerically compute the number of
roots of the Evans function contained in a given compact region of the complex plane.
The goal of this section is thus to eliminate the possibility of arbitrarily large
unstable eigenvalues, so as to reduce our computations to
a compact domain in $\lambda$ \cite{Br,BrZ,HLZ,HLyZ1,HLyZ2}.
This can be done by a number of techniques, including energy
estimates and high-frequency asymptotics.
As energy estimates degrade with small viscosity, we opt to
use the latter techniques, which is well adapted to the situation
of multiple scales such as are involved in our examples \cite{PZ,Z5}.

\subsection{Approximate block-diagonalization}\label{approxdiagsubsec}

To begin, we show that the coefficient matrix $A(x,\lambda)$ in the Evans system \eqref{homAII}
can be written, for $|\lambda|\gg 1$, as a block diagonal matrix plus an
asymptotically small term.  We begin by expanding $W'(\cdot)=A(\cdot,\lambda)W(\cdot)$ as
$A=B+R$, where
\ba\label{BR}
B&:=\bp
\frac{\lambda}{c}& 0 & 0\\
0 & 0 &  \bar \tau^2\\
\frac{ -\bar \alpha \lambda}{c\nu} &
\frac{\lambda }{\nu} & 0
\ep,
\\
R&:=\bp
0& 0 & -\frac{\bar \tau^2}{c}\\
0 & 0 &  0\\
\frac{(s+1)\bar \tau^s (q-c\bar \tau)^r -\bar \alpha_x }{\nu} &
\frac{ r\bar \tau^{s+1}(q-c\bar \tau)^{r-1}}{\nu} &
\frac{-c\bar \tau^2+\bar \alpha \bar \tau^2/c}{\nu}
\ep
=:
\bp
0& 0 & k\\
0 & 0 &  0\\
l& m & n
\ep,
\ea
where $\bar \alpha$ and $\bar \alpha_x$ are defined in \eqref{alphaII}
and where $\tau''$ 
is then computed using equation \eqref{e:profile}. Since $R=O(1)$ as $|\lambda|\to \infty$, we expect that the behavior of the Evans system \eqref{homAII} is governed by the principle part $B(x,\lambda)$ for $\lambda$ sufficiently large.  Notice,
however, that it is not straightforward to characterize this due to the multiple spectral scales associated with the growth
of the eigenvalues of $B$.  Indeed, a straightforward computation shows that the eigenvalues of the principle matrix
$B$ are given by $\frac{\lambda}{c}$ and $\pm\bar \tau \sqrt{\frac{\lambda}{\nu}}$, and hence the spectrum
of $B$ has two principle growth rates: order $\lambda$ and order $\lambda^{1/2}$.  In order to keep track of both of these
scales, one must make a number of careful transformations, which we now describe in detail.



First, we introduce the transformations
\be\label{T}
T:=\bp 1 & 0 &0\\
0&1&0\\
\theta&0 & 1\ep,
\qquad
T^{-1}=\bp 1 & 0 &0\\
0&1&0\\
-\theta & 0&0\ep,
\ee
where $\theta:=-\frac{\bar \alpha}{\nu}$.
Then we readily see that
\be
T^{-1}BT=
\bp\frac{\lambda}{c}& 0 & 0\\
\theta \bar\tau^2 & 0 &  \bar \tau^2\\
0 &\frac{\lambda }{\nu} & 0\ep,\qquad
T^{-1}RT=
\bp k\theta& 0 & k\\
0& 0 & 0\\
l+n\theta-k\theta^2 & m& n-k\theta\ep
\ee
and, defining
\be\label{tilde B}
B_1:= T^{-1}BT -\bp 0 & 0 & 0\\ \bar \tau^2\theta&0& 0\\
0&0&0\ep=
\bp
\lambda/c& 0 & 0\\
0 & 0 &  \bar \tau^2\\
0&\frac{\lambda }{\nu} & 0
\ep,
\qquad W:=TW_1,
\ee
we have $W_1'=(B_1+R_1)W_1$, where
\ba\label{R1}
R_1&:= T^{-1}RT - T^{-1}T'
+\bp 0 & 0 & 0\\ \bar \tau^2\theta&0& 0\\0&0&0\ep\\
&=
\bp k\theta& 0 & k\\
\bar\tau^2\theta& 0 & 0\\
l+n\theta-k\theta^2 -\theta'& m& n-k\theta\ep
=:
\bp
k_1 & 0 &  l_1\\
m_1& 0 &0\\
n_1&o_1&p_1
\ep.
\ea

Now, define
\ba\label{T_1}
T_1&:=\bp
1&0 & 0\\
0&1 & 0\\
0&0 & \mu\sqrt{\lambda}\\
\ep,
\qquad
T_1^{-1}&=\bp
1&0 & 0\\
0&1 & 0\\
0&0 & (\mu\sqrt{\lambda})^{-1}\\
\ep,
\ea
where $\mu=\frac{1}{ \bar \tau \sqrt{\nu}}$ and notice that
\be
T_1^{-1}B_1T_1=
\bp\frac{\lambda}{c} & 0 &  0\\
0& 0 &  \sqrt{\lambda}\mu\bar\tau^2\\
0& \sqrt{\lambda}\mu\bar\tau^2 &  0\ep,\qquad
T_1^{-1}R_1T_1=
\bp k_1&0&l_1\mu\sqrt{\lambda}\\
m_1&0&0\\
\frac{n_1}{\mu\sqrt{\lambda}}&\frac{o_1}{\mu\sqrt{\lambda}}&p_1
\ep.
\ee
Setting $W_1=T_1W_2$, we thus have
$W_2'=(B_2+R_2)W_2$, where
\ba\label{B2}
B_2:=
\bp
\frac{\lambda}{c} & 0 &  l_1\mu\sqrt{\lambda}\\
0& 0 &  \sqrt{\lambda}\mu\bar\tau^2\\
0& \sqrt{\lambda}\mu\bar\tau^2 &  0\\
\ep
\ea
and
\ba\label{R2}
R_2:= T_1^{-1}R_1T_1 - T_1^{-1}T_1'-\bp0&0&l_1\mu\sqrt{\lambda}\\0&0&0\\0&0&0\ep
&=
\bp
k_1 & 0 &  0\\
m_1& o_1 &0\\
\frac{n_1}{\sqrt{\lambda}\mu}&\frac{o_1}{\sqrt{\lambda}\mu}&p_2
\ep,
\ea
with $p_2:=p_1- \frac{\mu'}{\mu}=
p_1+ \frac{\bar \tau'}{\bar \tau}$.

Continuing, define
\ba\label{T_2}
T_2&:=\bp
1&0 & 0\\
0&1 & -1\\
0&1 & 1\\
\ep,
\qquad
T_1^{-1}&=\bp
1&0 & 0\\
0&\frac{1}{2} &\frac{1}{2}\\
0&-\frac{1}{2} & \frac{1}{2}\\
\ep
\ea
and set $B_3:= T_2^{-1}B_2T_2$ and $W_2:=T_2W_3$.  Then by a direct calculation we have
\be\label{finaltrack}
W_3'=(B_3+R_3)W_3,
\ee
where
\ba\label{B3}
B_3= \bp \frac{\lambda}{c} & l_1\mu\sqrt{\lambda}& l_1\mu\sqrt{\lambda}\\
0&  \sqrt{\lambda}\mu \bar\tau^2 &0\\
0& 0&  -\sqrt{\lambda}\mu \bar\tau^2
\ep
=\bp\frac{\lambda}{c}&-\frac{\bar\tau}{c}\sqrt{\frac{\lambda}{\nu}}&-\frac{\bar\tau}{c}\sqrt{\frac{\lambda}{\nu}}\\
0&  \bar\tau\sqrt{\frac{\lambda}{\nu}} &0\\
0&  0 & -\bar\tau\sqrt{\frac{\lambda}{\nu}} \\
\ep
\ea
and
\ba\label{R3}
R_3&:= T_2^{-1}R_2T_2 - T_2^{-1}T_2'\\
&=
\bp
k_1 & 0 &  0\\
\frac{m_1+n_1/\sqrt{\lambda}\mu}{2}& \frac{p_2+o_1/\sqrt{\lambda}\mu}{2}
& \frac{p_2-o_1/\sqrt{\lambda}\mu}{2}\\
\frac{-m_1+n_1/\sqrt{\lambda}\mu}{2}& \frac{p_2+o_1/\sqrt{\lambda}\mu}{2}&
\frac{p_2-o_1/\sqrt{\lambda}\mu}{2}\\
\ep
=:\bp k_3&0&0\\
l_3&m_3&n_3\\
o_3&m_3&n_3\ep.
\ea

Finally, we introduce the transformations
\ba
T_3&:=\bp1&\frac{\tilde{\theta}}{\sqrt{\lambda}}&\frac{\tilde{\theta}}{\sqrt{\lambda}}\\
0&1&0\\
0&0&1
\ep,\quad
T_3^{-1}&=\bp1&-\frac{\tilde{\theta}}{\sqrt{\lambda}}&-\frac{\tilde{\theta}}{\sqrt{\lambda}}\\
0&1&0\\
0&0&1
\ep,
\ea
where $\tilde{\theta}=\frac{\bar\tau}{\sqrt{\nu}}$ and note that
\ba
&T_3^{-1}B_3 T_3=\bp\frac{\lambda}{c}&-\frac{\bar\tau^2}{\nu}&\frac{\bar\tau^2}{\nu}\\
0&  \bar\tau\sqrt{\frac{\lambda}{\nu}} &0\\
0&  0 & -\bar\tau\sqrt{\frac{\lambda}{\nu}} \\
\ep,\\
&T_3^{-1}R_3 T_3=\\
&\bp k_3-(l_3+o_3)\frac{\tilde\theta}{\sqrt{\lambda}}
&(k_3-2m_3-(l_3+o_3)\frac{\tilde\theta}{\sqrt{\lambda}})\frac{\tilde\theta}{\sqrt{\lambda}}
&(k_3-2n_3-(l_3+o_3)\frac{\tilde\theta}{\sqrt{\lambda}})\frac{\tilde\theta}{\sqrt{\lambda}}\\
l_3&m_3+l_3\frac{\tilde\theta}{\sqrt{\lambda}}&n_3+l_3\frac{\tilde\theta}{\sqrt{\lambda}}\\
o_3&m_3+o_3\frac{\tilde\theta}{\sqrt{\lambda}}&n_3+o_3\frac{\tilde\theta}{\sqrt{\lambda}}\ep.
\ea
Defining then
\be
B_4\ =\ \bp\frac{\lambda}{c}&0&0\\
0&  \bar\tau\sqrt{\frac{\lambda}{\nu}} &0\\
0&  0 & -\bar\tau\sqrt{\frac{\lambda}{\nu}} \\
\ep,
\ee
and
\be
R_4\ =\ T_3^{-1}R_3 T_3+\bp0&-\frac{\bar\tau^2}{\nu}-\frac{\tilde\theta'}{\sqrt{\lambda}}
&\frac{\bar\tau^2}{\nu}-\frac{\tilde\theta'}{\sqrt{\lambda}}\\
0&0&0\\
0&0&0\ep,
\ee
it follows that the original Evans system \eqref{homAII} can be written in the approximately block-diagonal
system
\begin{equation}\label{approxdiag1}
W'=\left(B_4(x,\lambda)+R_4(x,\lambda)\right)W
\end{equation}
where $R_4=o(\lambda^\eps)$ as $\lambda\to \infty$ for all $\eps>0$.  With this result in hand, we can now
obtain bounds on the size of the unstable eigenvalues $\lambda$ corresponding to the linearized
St. Venant equations \eqref{e:homlinII}.  Before stating these bounds, however, we need the following
abstract tracking lemma concerning the eigenspaces of approximately block-diagonal systems.


\subsection{Quantitative tracking lemma}\label{quant}

We now recall the following quantitative estimate \cite{HLyZ1,HLyZ2}.
Consider an asymptotically constant approximately block-diagonal system
$W'=(M+\Theta)(x)W$,
\begin{equation}\label{tracksys}
 W=\begin{pmatrix}W_-\\ W_+\end{pmatrix}, \quad
M= \begin{pmatrix}M_-& 0\\ 0 & M_+ \end{pmatrix},
\quad
\Theta= \begin{pmatrix}\Theta_{--}& \Theta_{-+}\\
\Theta_{+-} & \Theta_{++} \end{pmatrix},
\end{equation}
with $\Re M_-\le c_-I$ and $\Re M_+\ge c_+I$ satisfying
\begin{equation}\label{numrange}
c_+-c_- \ge \delta(x)>0,
\end{equation}
where $\Re M:=(1/2)(M+M^*)$ denotes the symmetric part of a matrix $M$.

Denote by
\begin{equation}\label{zeta+-}
\begin{aligned}
\zeta_\pm(x)&:=
\frac{ \delta -|\Theta_{--}|-|\Theta_{++}|}{2|\Theta_{+-}|}
\pm
\sqrt{
\Big(\frac{ \delta -|\Theta_{--}|-|\Theta_{++}|}{2|\Theta_{+-}|}\Big)^2
-\frac{|\Theta_{-+}|}{|\Theta_{+-}|} }
\end{aligned}
\end{equation}
the roots of
\begin{equation}\label{quad}
P(\delta,x):=
\Big(-\delta +|\Theta_{--}|+|\Theta_{++}|\Big)\zeta
+|\Theta_{-+}|+ |\Theta_{+-}|\zeta^2=0,
\end{equation}
where, here and below, $|\cdot|$ denotes the $\ell^2$ matrix
operator norm.

\begin{lemma}[\cite{HLyZ1,HLyZ2}]\label{tracklem}
Suppose that
\begin{equation}\label{rootcond}
\delta> |\Theta_{--}|+|\Theta_{++}| + 2\sqrt{|\Theta_{-+}||\Theta_{+-}|}.
\end{equation}
Then, (i) $0<\zeta_-<\zeta_+$,
(ii) the invariant subspaces of the limiting coefficient
matrices $(M+\Theta)(\pm \infty)$ are contained in distinct cones
$$
\Omega_-=\{|W_-|/|W_+|\le \zeta_-\},
\quad
\Omega_+=\{|W_-|/|W_+|\ge \zeta_+\},
$$
and
(iii) denoting by $S^+$ the total eigenspace of $(M+\Theta)(+\infty)$
contained in $\Omega_+(+\infty)$
and $U^-$ the total eigenspace of $(M+\Theta)(-\infty)$
contained in $\Omega_-(-\infty)$,
the manifolds of solutions of \eqref{tracksys}
asymptotic to $S^+$ at $x=+\infty$ $U^-$ at $x=-\infty$
are separated for all $x$, lying in $\Omega_+$ and $\Omega_-$ respectively.
In particular, there exist no solutions of \eqref{tracksys} asymptotic
to $S^+$ at $+\infty$ and to $U^-$ at $- \infty$.
\end{lemma}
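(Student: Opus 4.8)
The plan is to prove Lemma~\ref{tracklem} by a standard conical tracking (or ``differential inequality on cone slope'') argument, adapted to block-diagonal perturbed systems. The idea is that under the spectral-gap hypothesis \eqref{rootcond}, the quadratic $P(\delta,x)$ in \eqref{quad} has two positive roots $0<\zeta_-<\zeta_+$, and the cones $\Omega_-$ and $\Omega_+$ are each \emph{positively invariant} (resp.\ negatively invariant) under the flow of \eqref{tracksys}: if a solution starts with slope $|W_-|/|W_+|\le\zeta_-$ it can never leave $\Omega_-$ as $x$ increases, and symmetrically for $\Omega_+$ in decreasing $x$. Since $\zeta_-<\zeta_+$, the cones are disjoint away from the origin, so a solution that lies in $\Omega_-$ for all large negative $x$ and in $\Omega_+$ for all large positive $x$ is impossible unless it is trivial — which gives the final assertion.

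The steps, in order: \textbf{(i)} Observe \eqref{rootcond} is exactly the discriminant condition forcing $P(\delta,\cdot)$ to have two real roots of the same (positive) sign; since the linear coefficient $-\delta+|\Theta_{--}|+|\Theta_{++}|<0$ and the constant term $|\Theta_{-+}|>0$, Descartes/Vieta give $0<\zeta_-<\zeta_+$, proving (i). \textbf{(ii)} Locate the invariant subspaces of $(M+\Theta)(\pm\infty)$: write the characteristic/Riccati relation for an eigenvector $(W_-,W_+)^T$, i.e. the slope $\zeta=|W_-|/|W_+|$ of an invariant subspace must make the ``balance'' $\Re$-part inequality consistent; using $\Re M_-\le c_-I$, $\Re M_+\ge c_+I$ and $c_+-c_-\ge\delta$, bound the slope by the roots of $P$, placing the stable-type subspace in $\Omega_-$ and the unstable-type in $\Omega_+$. \textbf{(iii)} The core estimate: for a solution $W(x)$ with $W_+\neq 0$, set $\rho(x):=|W_-(x)|/|W_+(x)|$ and compute (via $|W_\pm|'\le \Re\langle \widehat{W_\pm},W_\pm'\rangle$-type bounds using unit vectors) a differential inequality of the form
\begin{equation*}
\rho' \ \le\ |\Theta_{-+}| + \big(|\Theta_{--}|+|\Theta_{++}| - \delta\big)\rho + |\Theta_{+-}|\rho^2 \ =\ P(\delta,x)\big|_{\zeta=\rho},
\end{equation*}
on the set where this upper bound is what controls growth; the right-hand side is $\le 0$ for $\rho\in[\zeta_-,\zeta_+]$ and negative just above $\zeta_-$, so $\{\rho\le\zeta_-\}$ is forward-invariant and $\{\rho\ge\zeta_+\}$ is backward-invariant (the reciprocal $1/\rho$ satisfies the mirror-image inequality). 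This is steps (ii)--(iii) of the lemma. \textbf{(iv)} Conclude: a connecting solution would have $\rho(-\infty)\le\zeta_-$ hence $\rho(x)\le\zeta_-$ for all $x$, but also $\rho(+\infty)\ge\zeta_+>\zeta_-$, a contradiction; so no such solution exists.

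The main obstacle is step~\textbf{(iii)}, making the slope differential inequality rigorous: one must handle the non-smoothness of $x\mapsto|W_\pm(x)|$ where a component vanishes (work with Dini derivatives, or equivalently with $|W_\pm|^2$ and divide), carefully track that the coupling terms $\Theta_{-+}W_+$ and $\Theta_{+-}W_-$ enter with the operator-norm bounds claimed, and verify that the ``diagonal'' contributions are controlled by $c_\pm$ via the numerical-range bounds $\Re\langle W_-, M_-W_-\rangle\le c_-|W_-|^2$ and $\Re\langle W_+,M_+W_+\rangle\ge c_+|W_+|^2$. Once the scalar Riccati-type inequality $\rho'\le P(\delta,x)|_{\zeta=\rho}$ (and its reciprocal analogue) is in hand, positive invariance of the cones and the final separation statement follow by an elementary comparison/barrier argument with the constant (in the sense of sign-definite) functions $\zeta_\mp$. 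Since this is precisely the estimate proved in \cite{HLyZ1,HLyZ2}, I would cite those references for the technical details and present here only the structure above.
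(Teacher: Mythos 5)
Your proposal is correct and follows essentially the same route as the paper's own argument: differential inequalities on $|W_\pm|$ yielding the Riccati inequality $\zeta'\le P(\zeta,x)$ for the slope $\zeta=|W_-|/|W_+|$, forward invariance of $\{\zeta\le\zeta_-\}$ and backward invariance of $\{\zeta\ge\zeta_+\}$, placement of the limiting invariant subspaces in the cones, and the resulting separation of the two solution manifolds, with the technical details deferred to \cite{HLyZ1,HLyZ2} exactly as the paper does.
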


\br\label{noevans}
\textup{
In the case that $c_-<0$ at $+\infty$ and $c_+>0$ at $x=-\infty $,
$S^+$ and $U^-$ correspond to
the stable subspace at $+\infty$ and the unstable subspace at
$-\infty$ of the limiting constant-coefficient matrices, and
we may conclude nonexistence of decaying solutions of \eqref{tracksys},
or nonvanishing of the associated Evans function.
More generally, even if (as here), the property of decay is lost as
we traverse the boundary of consistent splitting, so long as there remains
a positive spectral gap $\delta$, and \eqref{rootcond} remains satisfied,
we may still conclude nonvanishing of the Evans function defined as above
by continuous extension of subspaces $U^-$ and $S^+$.
}
\er

\begin{proof}(\cite{HLyZ1,HLyZ2})
From \eqref{tracksys}, we obtain readily
\begin{equation}
\begin{aligned}
|W_-|'&\le  c_-|W_-|+
 |\Theta_{--}||W_-|+ |\Theta_{-+}||W_+|,\\
|W_+|'&\ge
c_+|W_+|
-|\Theta_{+-}||W_-| - |\Theta_{++}||W_+|,\\
\end{aligned}
\end{equation}
from which, defining $\zeta:= |W_-|/|W_+|$,
we obtain by a straightforward computation the Riccati equation
$\zeta'\le P(\zeta,x)$.

Consulting \eqref{quad}, we see that $\zeta'<0$ on the interval
$\zeta_-<\zeta<\zeta_+$,
whence $\Omega_-:=\{\zeta\le \zeta_-\}$ is an invariant region
under the forward flow of \eqref{tracksys}; moreover, this region
is exponentially attracting for $\zeta < \zeta_+$.
A symmetric argument yields that $\Omega_+:=\{\zeta \ge \zeta_+\}$ is
invariant under the backward flow of \eqref{tracksys}, and exponentially
attracting for $\zeta >\zeta_-$.
Specializing these observations to the constant-coefficient limiting
systems at $x=-\infty$ and $x=+\infty$, we find that the invariant
subspaces of the limiting coefficient matrices must lie in
$\Omega_-$ or $\Omega_+$.  (This is immediate in the diagonalizable
case; the general case follows by a limiting argument.)

By  forward (resp. backward) invariance of $\Omega_-$ (resp. $\Omega_+$),
under the full, variable-coefficient flow, we thus find that the manifold
of solutions initiated along $U^+$ at $x=-\infty$
lies for all $x$ in $\Omega_+$ while the manifold of
solutions initiated in $S^+$ at $x=+\infty$ lies for all $x$ in $\Omega_-$.
Since $\Omega_-$ and $\Omega_+$ are distinct,
we may conclude that under condition \eqref{rootcond}
there are no solutions asymptotic to both $U^-$ and $S^+$.
\end{proof}

\subsection{Bounds on unstable eigenvalues}
With the above tracking lemma in hand, we are able to state our main result
concerning the confinement of the unstable eigenvalues of \eqref{e:homlinII}.
After a change of coordinates switching $-$ and $+$, using notations of Section \ref{approxdiagsubsec},
equation \eqref{approxdiag1} determines a system of form
\eqref{tracksys} with
\ba \label{spectheta}
M_{+}&= \bp \frac{\lambda}{c} & 0 \\
0&  \bar\tau\sqrt{\frac{\lambda}{\nu}} \\ \ep,
\quad
M_{-} = \bp -\bar\tau\sqrt{\frac{\lambda}{\nu}}  \ep\\
\Theta_{++}&=
\Theta^0_{++}+\lambda^{-1/2}\Theta^1_{++}
+\lambda^{-1}\Theta^2_{++}+\lambda^{-3/2}\Theta^3_{++}\\
&:=
\bp
k_1 & -\frac{\bar\tau^2}{\nu} \\
\frac{m_1}{2}&\frac{p_2}{2}\\
\ep
+
\lambda^{-1/2}
\bp 0 & -\tilde\theta'+\tilde\theta(k_1-p_2)\\
\frac{n_1}{2\mu}&\frac{o_1}{2\mu}-\tilde\theta\frac{m_1}{2}\ep
\\
&\quad+\lambda^{-1}
\bp -\tilde\theta\frac{n_1}{\mu}&-\tilde\theta\frac{o_1}{\mu}\\
0&\tilde\theta\frac{n_1}{2\mu}\ep
+\lambda^{-3/2}
\bp 0&-\tilde\theta^2\frac{n_1}{\mu}\\
0&0\ep,\\
\\
\Theta_{+-}&=
\Theta^0_{+-} +\lambda^{-1/2}\Theta^1_{+-}
+\lambda^{-1}\Theta^2_{+-}+\lambda^{-3/2}\Theta^3_{+-}\\
&:=
\bp \frac{\bar\tau^2}{\nu}\\ \frac{p_2}{2}\\ \ep
+\lambda^{-1/2}
\bp -\tilde\theta'+\tilde\theta(k_1-p_2)\\ -\frac{o_1}{2\mu}+\tilde\theta\frac{m_1}{2}\\ \ep
+\lambda^{-1}
\bp\tilde\theta\frac{o_1}{\mu} \\ \tilde\theta\frac{n_1}{2\mu}\ep
+\lambda^{-3/2}
\bp-\tilde\theta^2\frac{n_1}{\mu} \\ 0\ep,\\
\Theta_{-+}&=
\Theta^0_{-+} + \lambda^{-1/2}\Theta^1_{-+}
+\lambda^{-1}\Theta^2_{-+}\\
&:=
\bp -\frac{m_1}{2}&\frac{p_2}{2} \ep
+
\lambda^{-1/2}
\bp \frac{n_1}{2\mu}& \frac{o_1}{2\mu}-\tilde\theta\frac{m_1}{2} \ep
+
\lambda^{-1}
\bp 0& \tilde\theta\frac{n_1}{2\mu}\ep
,\\
\Theta_{--}&=
\Theta^0_{--} + \lambda^{-1/2}\Theta^1_{--}
+ \lambda^{-1}\Theta^2_{--} \\
&:= \bp \frac{p_2}{2}\\ \ep
+\lambda^{-1/2} \bp -\frac{o_1}{2\mu}-\tilde\theta\frac{m_1}{2}\ep
+\lambda^{-1}\bp\tilde\theta\frac{n_1}{2\mu}\ep,
\ea
for which evidently
$\delta \ge \bar\tau\Re \sqrt{\frac{\lambda}{\nu}}
\ge |\lambda|^{1/2}\frac{\bar \tau}{\sqrt{2\nu}}$
when $\Re \lambda \ge 0$.  Then a direct application of Lemma \ref{tracklem} yields the
following high-frequency bound.

\bc\label{HFcorollary}
There are no unstable roots $\lambda$, with $\Re \lambda \ge 0$,
of $D$ with $|\lambda|>\max_xR^4$, where $R$ is the only positive root of $X^8-a_0X^6-a_{1/2}X^5-a_1X^4-a_{3/2}X^3-a_2X^2-a_{5/4}X-a_3$ with
\ba\label{ab}
a_0&=\frac{\sqrt{2\nu}\left(|\Theta^0_{--}|+|\Theta^0_{++}| + 2\sqrt{|\Theta^0_{-+}||\Theta^0_{+-}|}\right)}{\bar \tau},\\
a_{1/2}&:=\frac{\sqrt{2\nu}2\sqrt{|\Theta^0_{-+}||\Theta^1_{+-}|+|\Theta^1_{-+}||\Theta^0_{+-}|}}{\bar\tau},\\
a_1&:=\frac{\sqrt{2\nu}\left(|\Theta^1_{--}|+|\Theta^1_{++}| + 2\sqrt{|\Theta^1_{-+}||\Theta^1_{+-}|+|\Theta^0_{-+}||\Theta^2_{+-}|+|\Theta^2_{-+}||\Theta^0_{+-}|}\right)}{ \bar \tau},\\
a_{3/2}&:=\frac{\sqrt{2\nu}2\sqrt{|\Theta^0_{-+}||\Theta^3_{+-}|+|\Theta^1_{-+}||\Theta^2_{+-}|+|\Theta^2_{-+}||\Theta^1_{+-}|}}{\bar\tau},\\
a_2&:=\frac{\sqrt{2\nu}\left(|\Theta^2_{--}|+|\Theta^2_{++}| + 2\sqrt{|\Theta^2_{-+}||\Theta^2_{+-}|+|\Theta^1_{-+}||\Theta^3_{+-}|}\right)}{\bar \tau},\\
a_{5/4}&:=\frac{\sqrt{2\nu}2\sqrt{|\Theta^2_{-+}||\Theta^3_{+-}|}}{\bar\tau},\\
a_3&:=\frac{\sqrt{2\nu}|\Theta^3_{++}|}{\bar \tau}.
\ea
In particular there are no unstable roots $\lambda$ with modulus larger than $\tilde R^4$ where $\tilde R$ is the only
positive root of $X^8-(\max a_0)X^6-(\max a_{1/2})X^5-(\max a_1)X^4-(\max a_{3/2})X^3-(\max a_2)X^2-(\max a_{5/4})X-\max a_3$.
\ec

To aid in the utilization of the bounds in Corollary \ref{HFcorollary}, note that $R^4$ is smaller than
\begin{itemize}
\item the square of the only positive root of
\ba
X^4-\left(a_0X+\frac{a_{1/2}}{2}\right)X^3-\left(\frac{a_{1/2}}{2}+a_1+\frac{a_{3/2}}{2}\right)X^2-\left(\frac{a_{3/2}}{2}+a_2+\frac{a_{5/4}}{2}\right)X-\left(\frac{a_{5/4}}{2}+a_3\right)\\
=:X^4-\tilde a_0 X^3-\tilde a_1 X^2- \tilde a_2 X-\tilde a_3;
\ea
\item the only positive root of
\ba
X^2-(\tilde a_0^2+2\tilde a_1+\tilde a_2)X-(\tilde a_2+2\tilde a_3).
\ea
\end{itemize}
Both bounds can be explicitly given.  Here, however, we determine $R$ by numerical solution of the full eighth-order
polynomial equation given in Corollary \eqref{HFcorollary}.

\begin{proof}
By Lemma \ref{tracklem}, the triangle inequality and subadditivity of the square-root function,
there are no unstable eigenvalues $\lambda$ with
\ba\label{Rcomp}
|\lambda|^{1/2}&>
\frac{\sqrt{2\nu}\left(|\Theta^0_{--}|+|\Theta^0_{++}| + 2\sqrt{|\Theta^0_{-+}||\Theta^0_{+-}|}\right)}{\bar \tau}\\
&\quad
+|\lambda|^{-1/4}
\frac{\sqrt{2\nu}2\sqrt{|\Theta^0_{-+}||\Theta^1_{+-}|+|\Theta^1_{-+}||\Theta^0_{+-}|}}{\bar\tau}\\
&\quad
+|\lambda|^{-1/2}
\frac{\sqrt{2\nu}\left(|\Theta^1_{--}|+|\Theta^1_{++}| + 2\sqrt{|\Theta^1_{-+}||\Theta^1_{+-}|+|\Theta^0_{-+}||\Theta^2_{+-}|+|\Theta^2_{-+}||\Theta^0_{+-}|}\right)}{ \bar \tau}\\
&\quad
+|\lambda|^{-3/4}
\frac{\sqrt{2\nu}2\sqrt{|\Theta^0_{-+}||\Theta^3_{+-}|+|\Theta^1_{-+}||\Theta^2_{+-}|+|\Theta^2_{-+}||\Theta^1_{+-}|}}{\bar\tau}\\
&\quad
+|\lambda|^{-1}
\frac{\sqrt{2\nu}\left(|\Theta^2_{--}|+|\Theta^2_{++}| + 2\sqrt{|\Theta^2_{-+}||\Theta^2_{+-}|+|\Theta^1_{-+}||\Theta^3_{+-}|}\right)}{\bar \tau}\\
&\quad
+|\lambda|^{-5/4}
\frac{\sqrt{2\nu}2\sqrt{|\Theta^2_{-+}||\Theta^3_{+-}|}}{\bar\tau}\\
&\quad
+|\lambda|^{-3/2}
\frac{\sqrt{2\nu}|\Theta^3_{++}|}{\bar \tau},
\ea
also written $|\lambda|^{1/2}>a_0+a_{1/2}|\lambda|^{-1/4}+a_1|\lambda|^{-1/2}+a_{3/2}|\lambda|^{-3/4}+a_2|\lambda|^{-1}+a_{5/4}|\lambda|^{-5/4}+a_3|\lambda|^{-3/2}$.
The condition is satisfied as soon as $|\lambda|^{1/4}>\max_x R$.
\end{proof}

For those interested in reproducing or using the high frequency bounds obtained above, we record in the Appendix
the formulas for the various terms listed above in terms of the original underlying homoclinic profile.

\br
By similar estimates (see the abstract Tracking Lemma of \cite{MaZ3,PZ,Z5}),
one may obtain as in \cite{HLyZ1} the asymptotic description
\begin{equation}\label{conv}
D(\lambda)\sim c_1 e^{c_2\sqrt{\lambda}}
\end{equation}
for some $c_1,c_2\in\RM$ as $|\lambda|\to\infty$, with convergence at rate $\mathcal{O}(|\lambda|^{-1/2})$.
\er

\section{Numerical investigations}\label{s:num}

In this section, we carry out a numerical study of the stability of the solitary wave solutions of the generalized St. Venant equations.
In particular, we perform numerical Evans function calculations to detect the presence of unstable point spectrum.  In the two
examples considered, we find that one has stable essential spectrum with unstable point spectrum, while the other has stable point
spectrum with unstable essential spectrum.  Furthermore, for the latter case we perform a time evolution study to illustrate
the convective nature of the instability arising from the essential spectrum.  For illustrative purposes, however, we
choose to perform a numerical Evans study of the solitary wave solutions of the viscous Jin--Xin equations.  Recall
that by our considerations in Section \ref{s:JXequations}, we expect the linearization to have unstable point spectrum, in particular,
a positive real eigenvalue, corresponding to exponential instability of the solitary wave.

\subsection{Jin--Xin Example}

To begin, consider first the set of equations
\ba \label{eq:toy2}
\tau_t - u_x&= 0,\\
u_t -\tau_x&= \tau^{-1/2} -u +u_{xx},
\ea
corresponding to \eqref{eq:toy} with nonlinearity $f(\tau)=-\tau^{-1/2}$, mimicking the St. Venant structure, and $c_s=1$.
For this example, we consider the solitary wave profile $\bar U=(\bar \tau,\bar u)$ satisfying the ODE
\[
\frac{1}{2}\left(\bar\tau'\right)^2=\frac{1}{3}\bar\tau^3+\frac{1}{2}\bar\tau^2-2\bar\tau-\frac{1}{2}
\]
corresponding to the profile ODE \eqref{ham} with $(q,H)=\left(2,-\frac{1}{2}\right)$.  This profile is numerically generated using MATLAB's boundary-value solver bvp5c \cite{KL} and is depicted in phase space Figure \ref{JinXin_Evans_2}(a) and is plotted against the spatial variable $x$ in Figure \ref{JinXin_Evans_2}(b).   To study
the spectral stability of this profile, we must next consider the linearized eigenvalue equations \eqref{eqn:teig}
with $c_s=1$ and $df(\bar\tau)=-2\bar \tau$.  Since we expect the existence of a real unstable eigenvalue by the
calculations in Section \ref{s:JXequations},
we numerically compute the Evans function along the positive real line and show the existence of a positive root.  This is accomplished
here by using the polar-coordinate method of \cite{HuZ} as well as other standard procedures; see also \cite{BrZ}.
In particular, the Evans function was computed in MATLAB using RK45 to solve the associated ODE's.
The adaptive error control provided by RK45 gives an estimate on truncation error which, by the results of \cite{Z6}, then translate
to convergence error estimates. Furthermore, we find analytically varying initializing bases $\{R_j^{\pm}\}$ for the Evans function computation
via the method of Kato; see \cite{GZ,HuZ,BrZ,BHZ}.  Following this procedure then, we plot in Figure \ref{JinXin_Evans_2} (c)
the output as evaluated on the interval $[0,1]$ and clearly see that $D(\lambda)$ vanishes (geometrically) twice within this interval: once at
$\lambda=0$ corresponding to the simple translational eigenvalue and once at a positive $\lambda_0>0$ yielding the
expected instability.
\begin{figure}[htbp]
\begin{center}
$
\begin{array}{lr}
  (a)\includegraphics[scale=0.25]{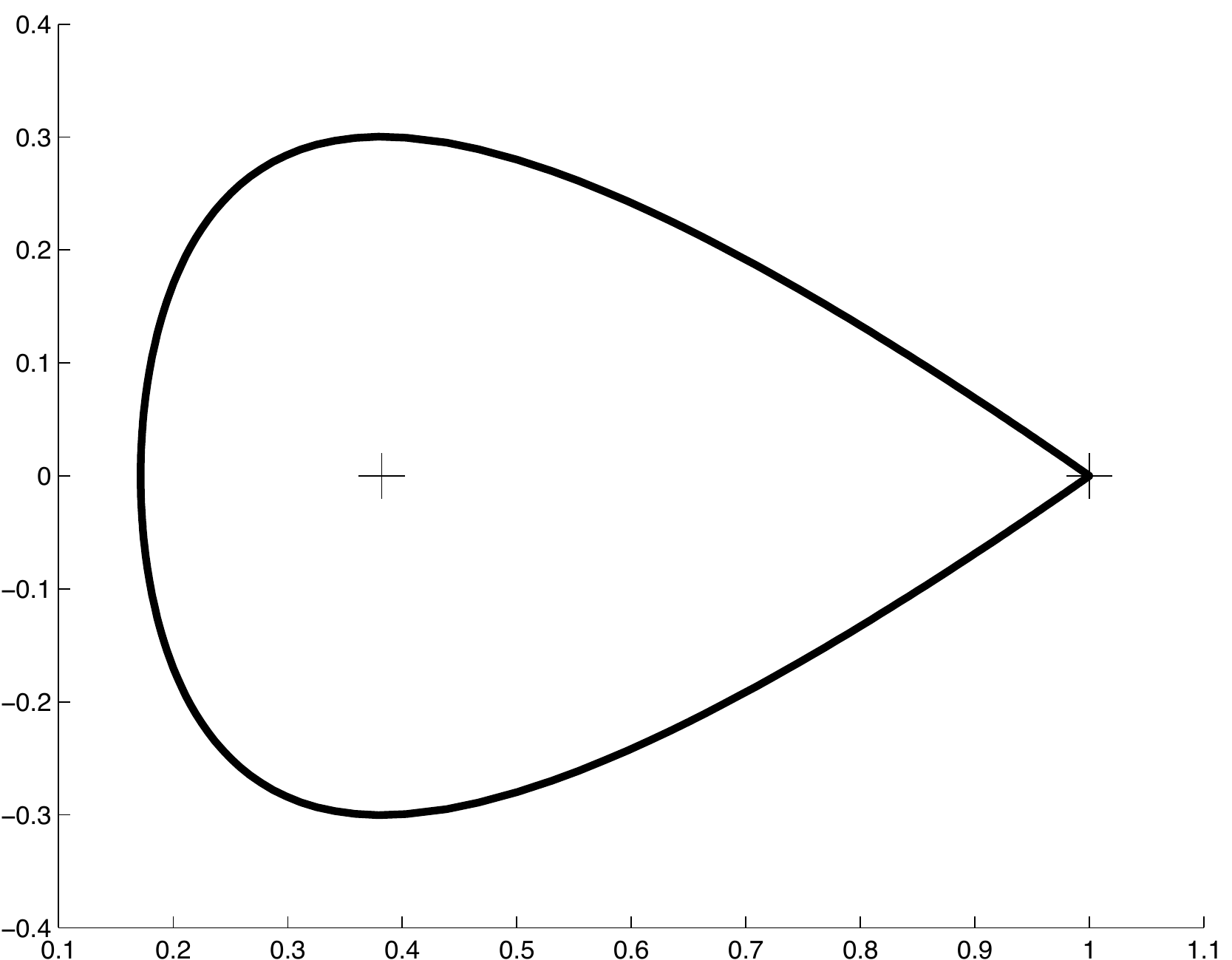}   \quad  (b)  \includegraphics[scale=.25]{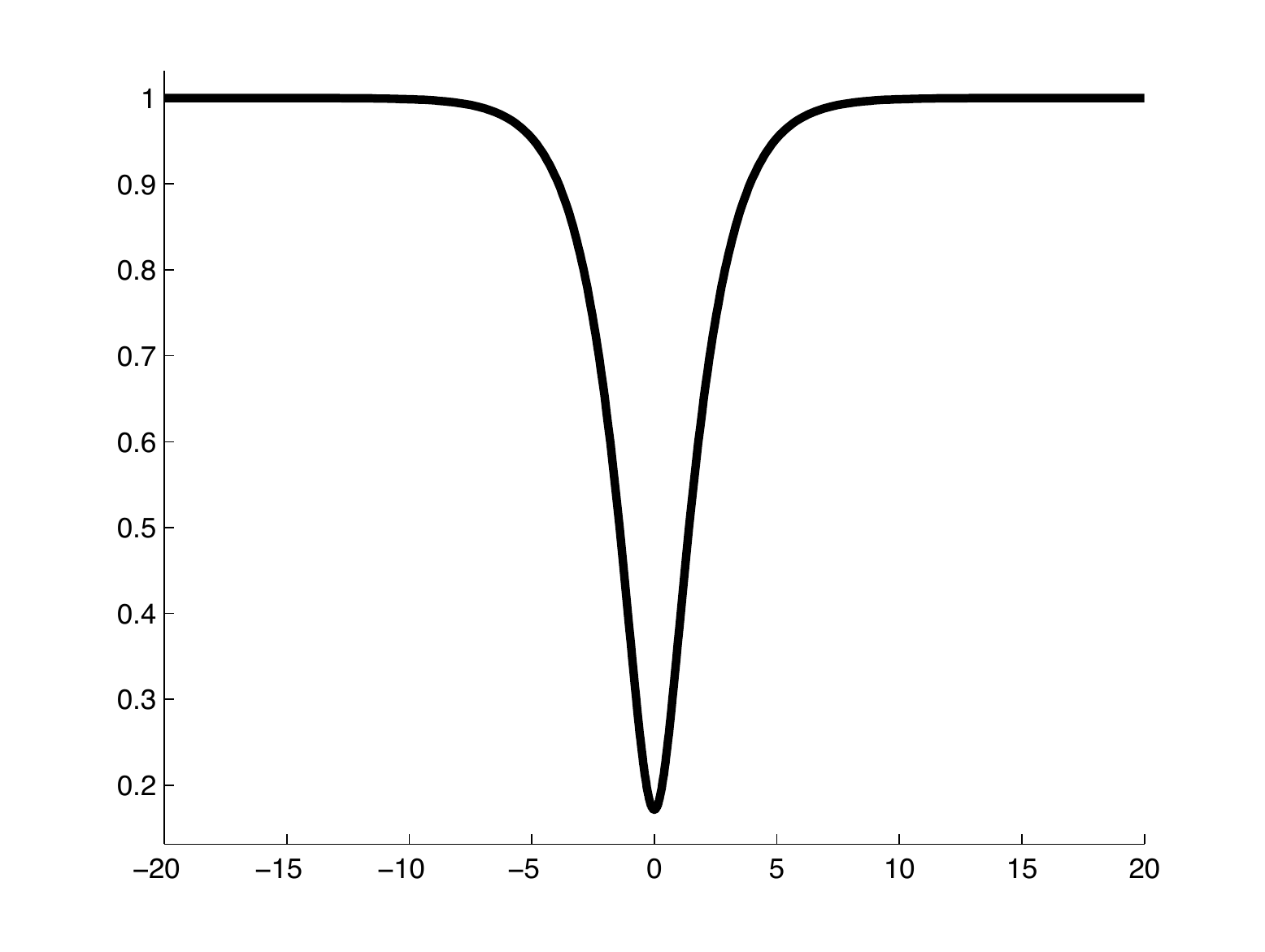} \quad (c) \includegraphics[scale=0.25]{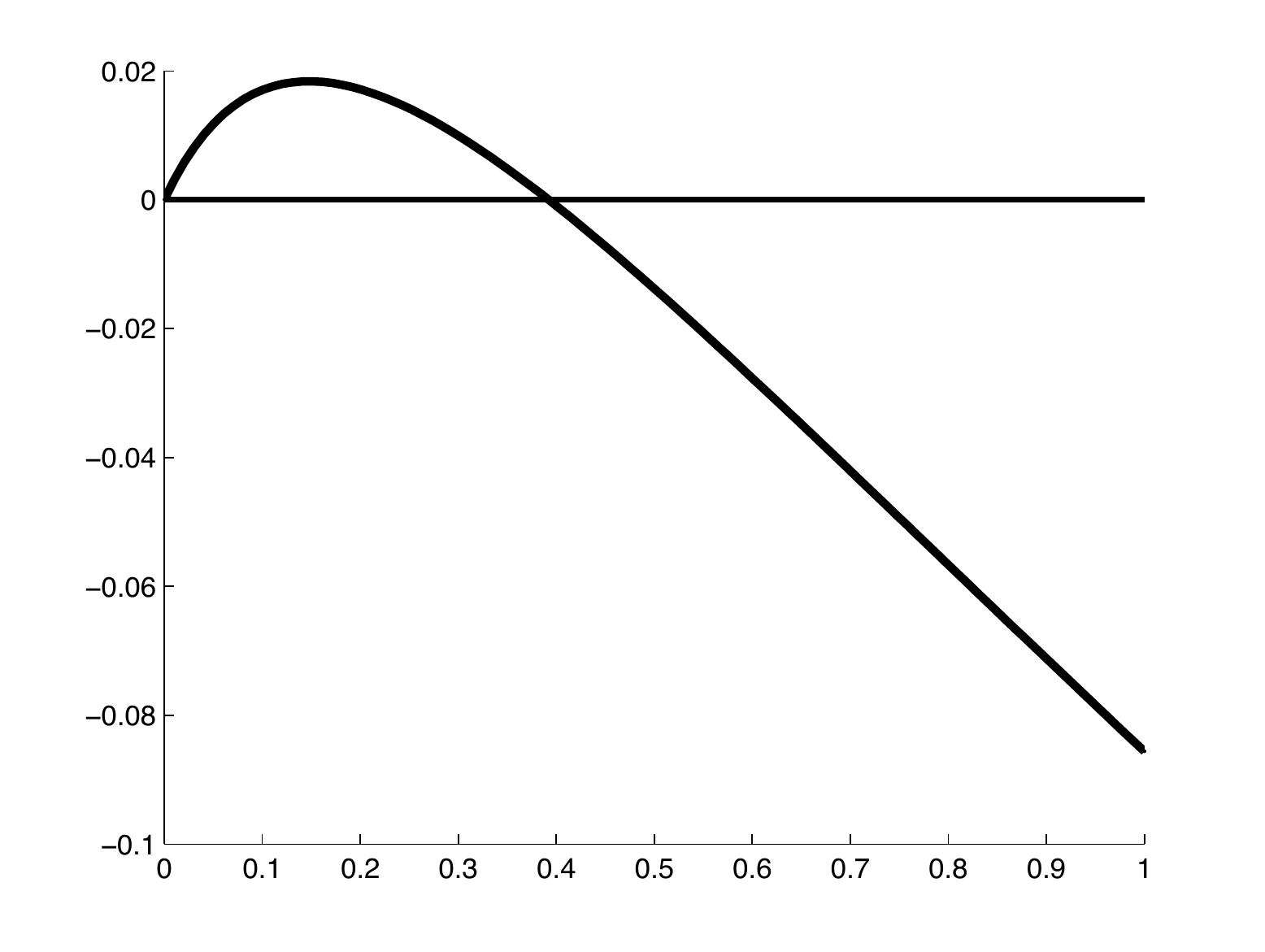}
 \end{array}
 $
\end{center}
  \caption{Evans function output for the Jin-Xin equations with $f(\tau)=-\tau^{-1/2}$ and $c_s=1$, $q=2$, and $H=-1/2$. We display the homoclinic profile
  in phase space in Figure (b) and against the spatial variable in Figure (b).   In Figure (c),
  we observe the Evans function output evaluated on the real line, illustrating the existence of an unstable
  positive eigenvalue.   
}
  \label{JinXin_Evans_2}
\end{figure}

An alternate way of numerically seeing this instability through the use of an Evans function is through the use of a
winding number calculation.  Indeed, the analyticity of the Evans function on the spectral
parameter $\lambda$ implies the number of solutions of the equation $D(\lambda)=0$ within the bounded component
of a contour $\Gamma$, along which $D|_{\Gamma}$ is non-vanishing, can be calculated via the winding number
\begin{equation}\label{winding}
\frac{1}{2\pi i}\int_{\Gamma}\frac{D'(\lambda)}{D(\lambda)}d\lambda.
\end{equation}
Notice that unlike our previous Evans function calculation, which detected only real roots of $D(\lambda)=0$,
this method allows one to exclude the possibility of unstable eigenvalues within a given compact domain.  This advantage
will be exploited below in conjunction with the eigenvalue bounds of Corollary \ref{HFcorollary} to conclude not only
the absence of real unstable eigenvalues but of any unstable eigenvalues.  To illustrate the numerical procedure, we consider
again the Jin--Xin profile studied above.  Notice that finding the analytically varying initializing bases $\{R_j^{\pm}\}$ as
described above preserves analyticity of the Evans function allowing us to use, via the
argument principle, a winding number computation to determine the presence of roots inside a given contour.  For this example,
we choose a semi-circular contour of radius one and inner circle radius $10^{-3}$.  Notice for a winding number calculation
the contour must not go through the origin since $D(0)=0$,
and hence we must deform the standard semi-circular contour slightly near the origin.  Using sufficient number of mesh points then,
we can ensure the relative change in $D(\lambda_j)$ between consecutive mesh points $\{\lambda_j\}$ is less than $0.2$, thus
assuring an accurate winding number count by Rouch\'e's Theorem.  For this specific example, a plot of the Evans function output
restricted to the semi-circular contour described above is shown in Figure \ref{JinXin_Evans_3}.  From this, it is easily seen that the winding number
is one, again verifying the expected instability.
\begin{figure}[htbp]
\begin{center}
$
\begin{array}{lr}
    (a)\includegraphics[scale=0.25]{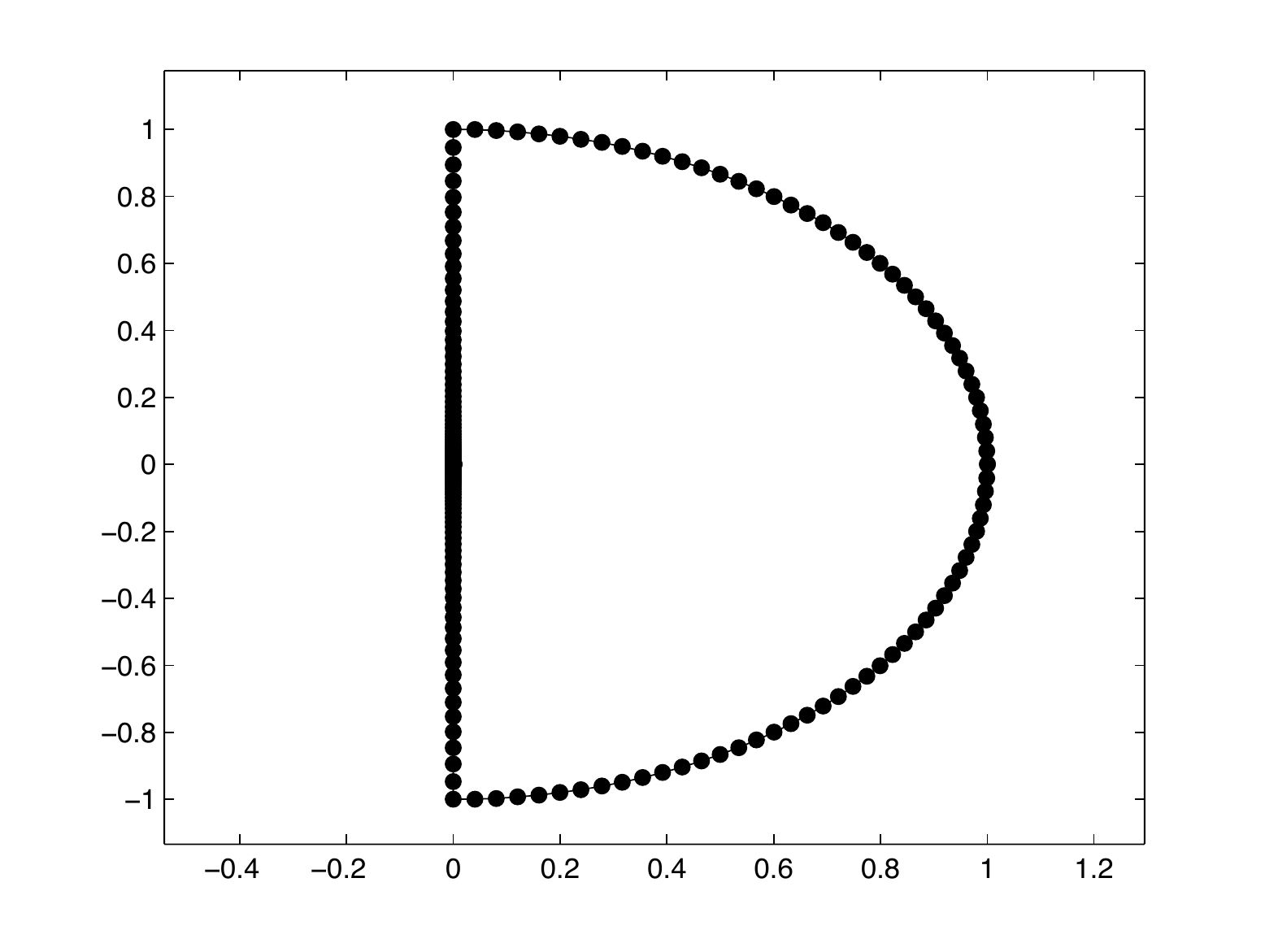}\quad (b)\includegraphics[scale=.25]{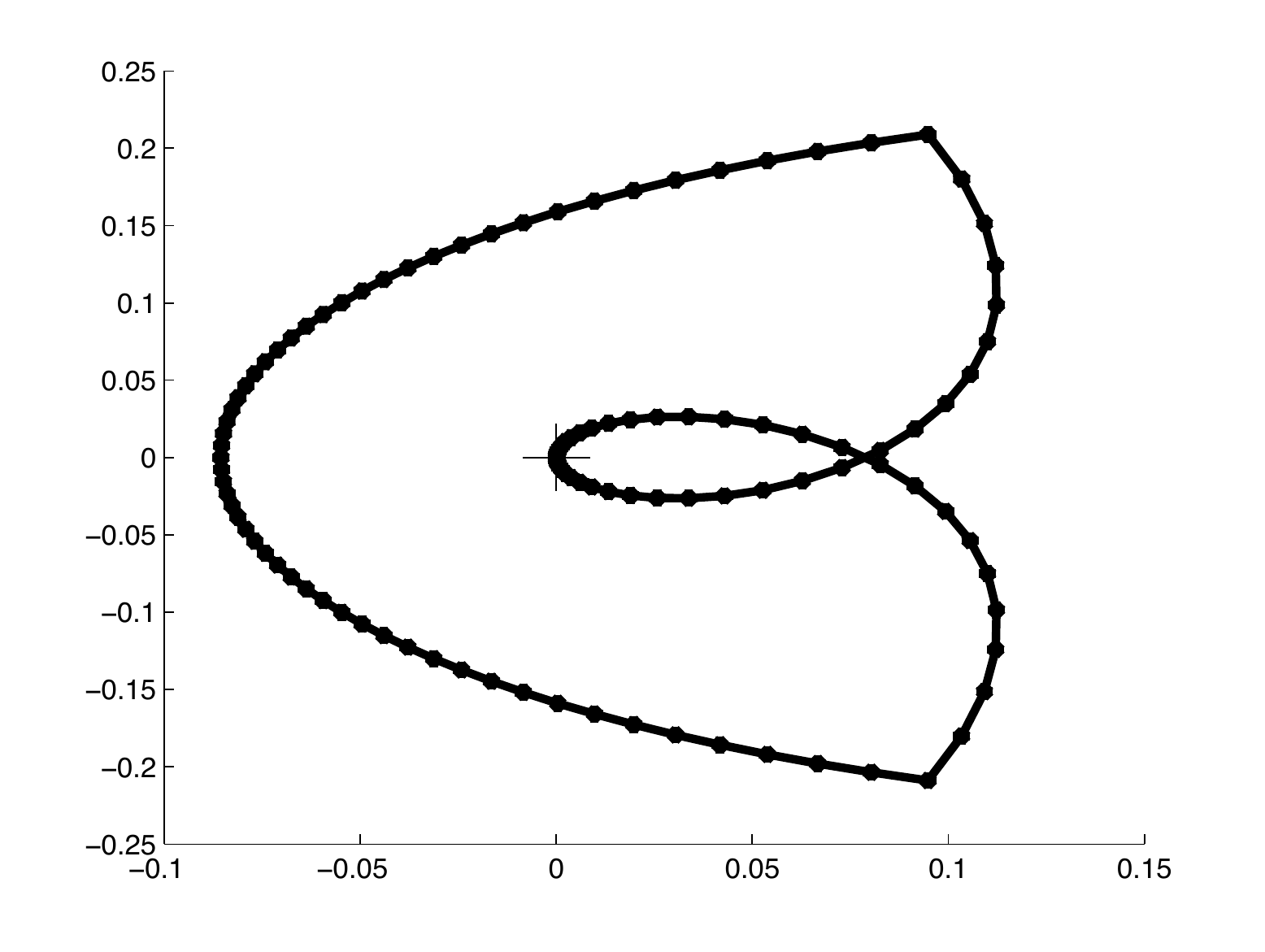} \quad(c) \includegraphics[scale=0.25]{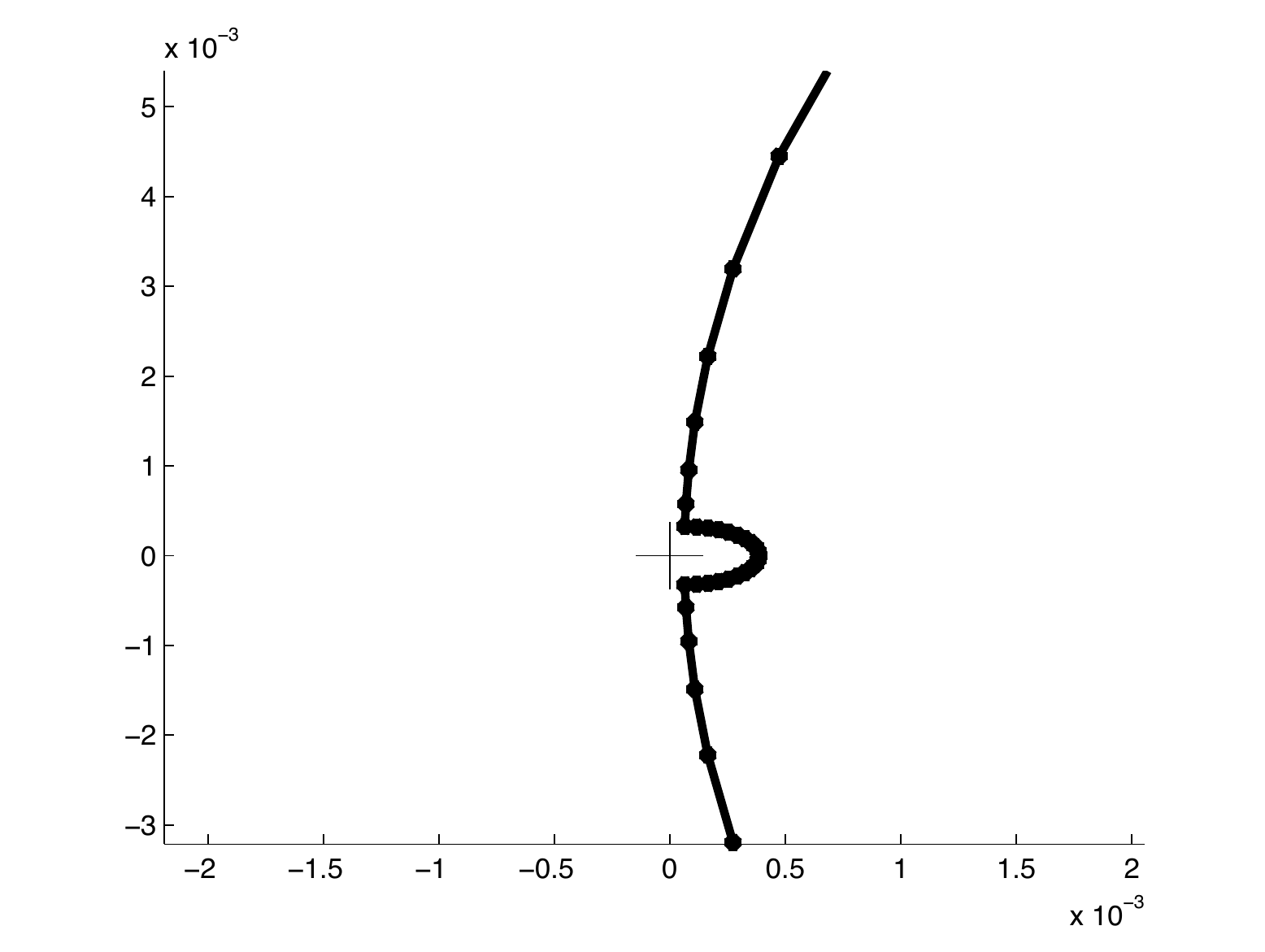}
 \end{array}
 $
\end{center}
  \caption{Evans function output for the Jin-Xin equations with $f(\tau)=-\tau^{-1/2}$ and $c_s=1$, $q=2$, and $H=-1/2$.
  Figure (a) depicts our semi-circular contour used for the winding number calculation.  In Figure (b), we have the Evans function output
  evaluated 
  along the contour and in Figure (c) we zoom in on the origin.  
  The winding number is one for the contour displayed, as expected.
}
  \label{JinXin_Evans_3}
\end{figure}

Next, we perform the analogous numerical study for the St. Venant equations \eqref{eqn:1conslaw} for two distinct values
of the turbulent friction parameters.  First, we consider the case $(r,s)=(1,1)$ demonstrating stable essential spectrum
and unstable point spectrum.  Second, we consider the common case $(r,s)=(2,0)$ demonstrate unstable essential spectrum
and stable point spectrum.  As mentioned in the introduction, this instability is convective in nature and we demonstrate
this with a time evolution study.

\subsection{Case $(r,s)=(1,1)$}

First, consider the equation
\ba \label{e:case1}
\tau_t - u_x&= 0,\\
u_t+ ((0.444)^{-1}\tau^{-2})_x&=
1- \tau^{2} u +20 (\tau^{-2}u_x)_x ,
\ea
corresponding to the St. Venant system \eqref{eqn:1conslaw} with $(r,s)=(1,1)$, $F=0.222$, and $\nu=20$.
In this example, we consider the traveling wave profile associated with the corresponding
traveling wave ODE \eqref{e:profile} with wave speed $c\approx3.1869$ and integration constant $q=1+c$.
Following the protocol previously described, in Figure \ref{venant_evan_homoclinic2}(a) we depict the corresponding
orbit in phase space along with the Evans function output along the real line.  From Figure \ref{venant_evan_homoclinic2}(b),
it is clear there exists an unstable real eigenvalue associated with this profile.  Although not depicted here,
this was further verified by a winding number calculation as in the Jin--Xin case above, in which
one finds a winding number of one for an appropriate semicircular contour.  Moreover, notice
that the limiting constant state $(1,0)$ satisfies the subcharacteristic condition
\[
2\sqrt{F}<\left(\lim_{x\to\pm\infty}\tau(x)\right)^{3/2}
\]
corresponding to spectral stability of the limiting state.  It follows that although this profile admits unstable point
spectrum, its essential spectrum is stable.  We verify this numerically using the SpectrUW package developed
at the University of Washington\cite{DK}, which is designed to find the essential spectrum of linear operators with
periodic coefficients by using Fourier-Bloch decompositions and Galerkin truncation; see \cite{CuD,CDKK,DK} for further information and for details concerning
convergence.  Although this package is designed for periodic problems, the essential spectrum corresponding
to the limiting constant state (treated as a periodic function) of our homoclinic is precisely the essential spectrum of the original homoclinic profile,
and hence we can numerically compute the essential spectrum of a solitary wave using SpectrUW in this way.
In particular, we find that the essential spectrum is stable, as expected; see Figure \ref{venant_evan_homoclinic2}(c).

\begin{figure}[htbp]
\begin{center}
$
\begin{array}{lr}
  (a)  \includegraphics[scale=.23]{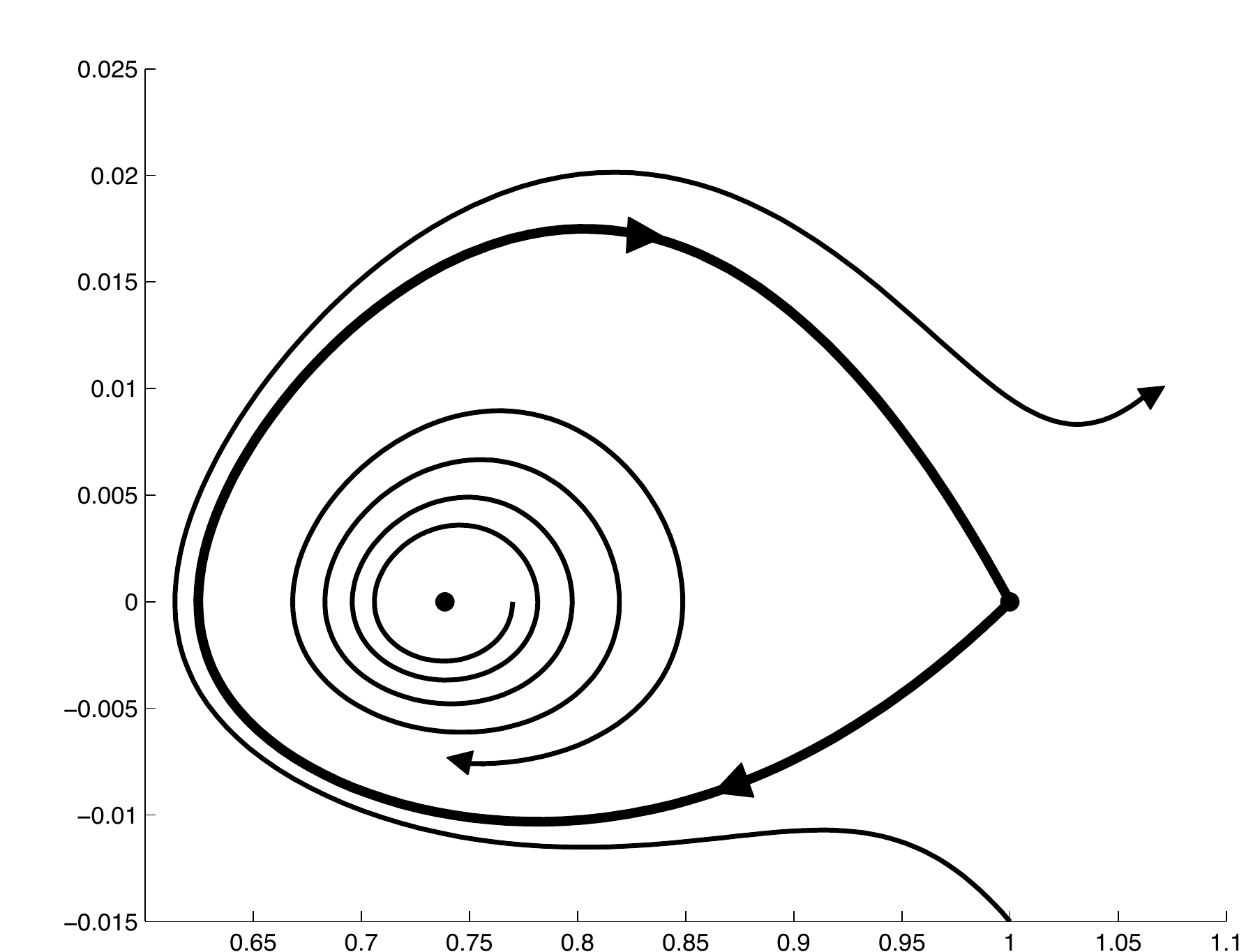}  
  \quad (b) \includegraphics[scale=0.23]{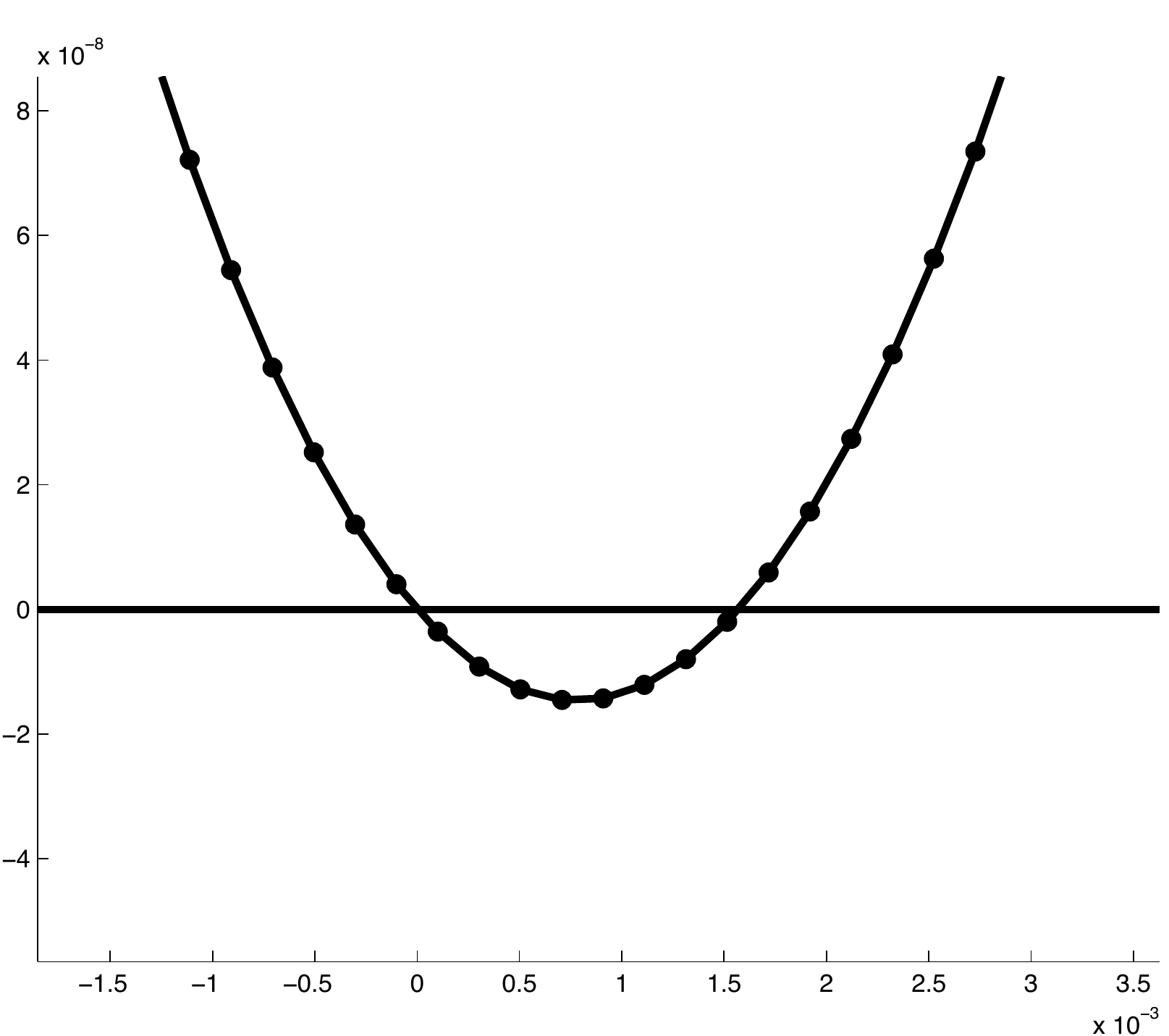}\quad (c)\includegraphics[scale=0.3]{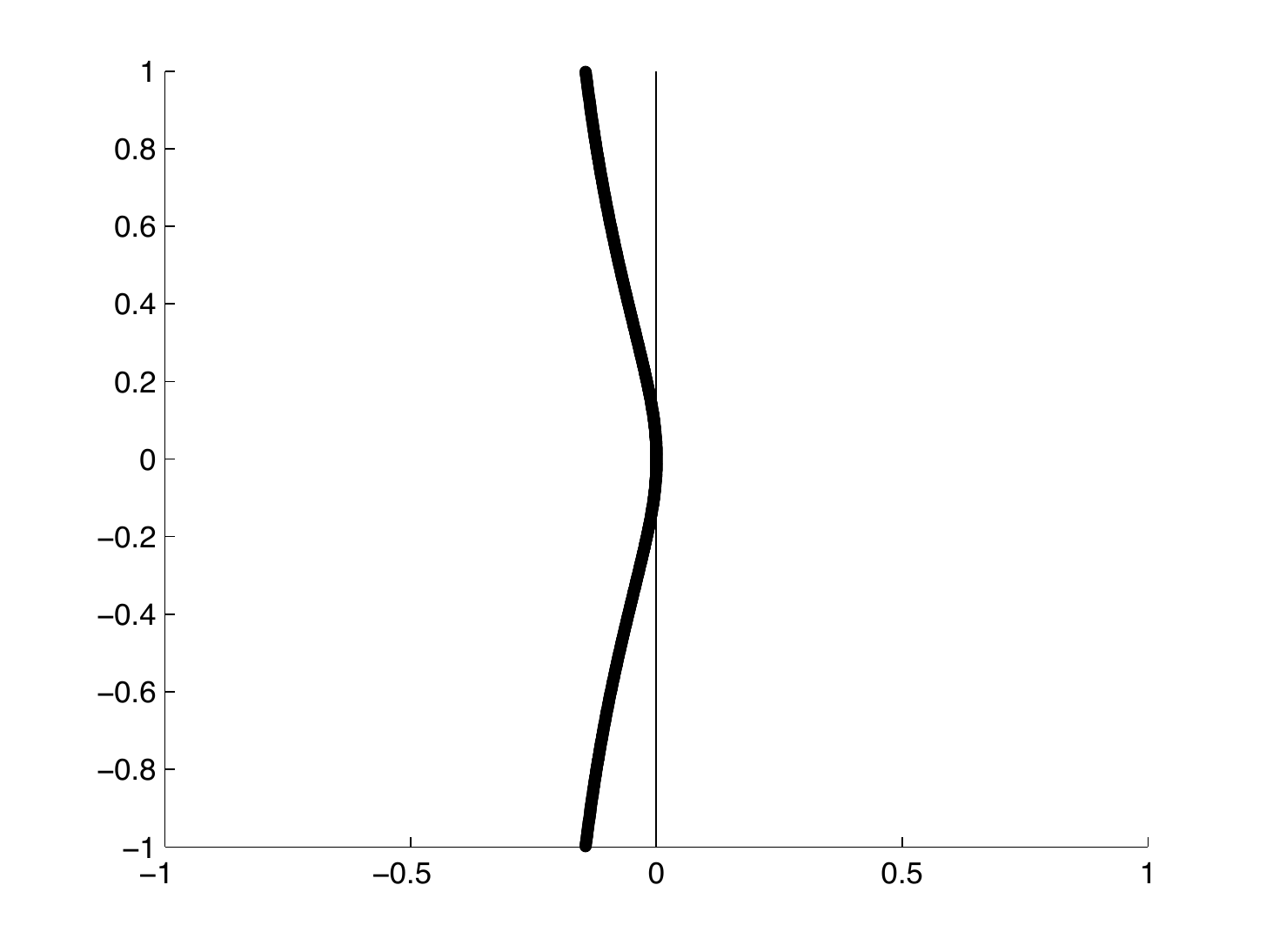}
 \end{array}
 $
\end{center}
  \caption{Demonstration of unstable point spectrum in generalized St. Venant system. Here $r=s=1$, $\nu=20$, $F=0.222$, $c\approx 3.1869$, and $q=1+c$.
  We have the phase portrait plotting $\tau$ verse $\tau'$ in Figure (a), 
  and the Evans function evaluated along the real line in Figures (b).  Finally, Figure (c) depicts the numerical evaluation of the essential
  spectrum of the homoclinic using the SpectrUW package.}
   \label{venant_evan_homoclinic2}
\end{figure}

We now wish to connect this numerical experiment to the analytical Evans function calculations of Section \ref{s:stabindex}, in particular
Corollary \ref{stabcor} and Remark \ref{odestab}.  To this end, we note that numerically it is found that periodics exist
for the system \eqref{e:case1} as $c$ is increased through the homoclinic wave speed, which here we have chosen as $c_{\rm hom}\approx 3.1869$.
Furthermore, as seen by Figure \ref{venant_evan_homoclinic2}(a) the equilibrium solution enclosed by the homoclinic orbit is a repeller.
Together these considerations suggest that $\partial_c d(c,q)>0$ by our discussion in Remark \ref{odestab}.
Note this is verified by noting from Figure \ref{venant_evan_homoclinic2}(b) that $D'(0)<0$ and using Lemma \ref{l:lowfreq}.
Thus, the instability of this profile follows trivially from direct analysis of the corresponding phase portrait and Corollary \ref{stabcor}.
Notice, however, that this example shows the dependence on the ``repeller/attractor" rule of thumb on how the periodic
orbits are generated from the homoclinic.

\subsection{Case $(r,s)=(2,0)$}\label{Case2}

Next, we consider the equation
\ba \label{e:case2}
\tau_t - u_x&= 0,\\
u_t+ ((18)^{-1}\tau^{-2})_x&=
1- \tau u^2 +0.1 (\tau^{-2}u_x)_x ,
\ea
corresponding to the St. Venant system with $(r,s)=(2,0)$, $F=9$, and $\nu=0.1$.  Here, we consider the solitary wave profile
associated with the corresponding profile ODE with wave speed $c\approx 0.7849$ and integration constant $q=1+c$,
which is depicted in phase space in Figure \ref{venant_evan_homoclinic}(a).  We begin by discussing the point spectrum
associated to this solitary wave solution.  Noting that periodics are seen to be numerically generated as the wave speed
is decreased from the homoclinic speed, the fact that the equilibrium enclosed in the homoclinic is a repeller suggests
that $\partial_c d(c,q)<0$ by Remark \ref{odestab}.  In particular, our heuristic suggests that the stability
index derived in Section \ref{s:stabindex} does not yield any information concerning the existence of unstable
point spectrum of the corresponding linearized operator.  This motivation is further supported noting
from the Evans function output along the real line in Figure \ref{venant_evan_homoclinic}(b) that $D'(0)>0$, and hence
$\sgn\partial_cd(c,q)<0$ by Lemma \ref{l:lowfreq} as expected.

In order to further study the point spectrum of the corresponding linearized operator,
we need a high frequency bound on the eigenvalues of \eqref{e:homlinII}.  By
using the numerically computed profile to evaluate the functions $a_j$ in \eqref{ab},
an application of Corollary \ref{HFcorollary} implies that
any unstable roots of $D(\lambda)$, if they exist for the given parameter values,
must satisfy the bound $|\lambda|<308$.  In Figure \ref{venant_evan_homoclinic}(c) and (d) the
output of the Evans function evaluated on a semi-circle of outer radius $308$ and inner radius $10^{-3}$,
from which we compute that the winding number \eqref{winding} is equal to zero\footnote{In this example, the Evans function was evaluated
on the contour with 3457 mesh points, yielding a maximum relative error between mesh points of 0.2.  As stated
before, this is sufficient to yield an accurate winding number by Rouch\'e's Theorem.}.  In particular,
it follows that the linearized eigenvalue problem in consideration has stable point spectrum as expected.  As noted
in the introduction, this example is notable as being the first example of a second-order hyperbolic-parabolic
conservation or balance law which admits a solitary wave with stable point spectrum.

\br
The eigenvalue estimates of Corollary \ref{HFcorollary} should be contrasted
with the results of a convergence study based on \eqref{conv}.
In the example considered here, the results of a high frequency convergence
study are listed below, with ``Relative Error'' denoting the maximum
relative error between the numerically computed value of $D(\lambda)$
and an approximant \eqref{conv} determined by curve fitting on the
semicircle $|\lambda|=R$, $\Re \lambda \ge 0$:
\[
\begin{array}{cc}
\textrm{Relative Error} & \textrm{Radius R}\\
  0.1112 & 8192 \\
  0.1525 & 4096 \\
  0.2062 & 2048 \\
  0.2740 & 1020 \\
  0.4230 & 308 \\
  0.4522 & 250 \\
  0.5484 & 130
\end{array}
\]
The radius $R=308$ derived from our high frequency asymptotics and tracking in Corollary \ref{HFcorollary} thus
correspond to a relative error of approximately $R=0.4230$, and hence the estimates in Corollary \ref{HFcorollary}
are quite efficient even though they produce a radius that is relatively large.  Furthermore, from this convergence study
it seems then that the convergence rate of $\mathcal{O}(|\lambda|^{-1/2})$ from \eqref{conv} breaks down for $R\approx 250$.
This suggests that the bound of $R=308$ obtained from our tracking estimates in Corollary \ref{HFcorollary}
is very close to the boundary of the high frequency asymptotics.
\er

Next, recalling that the subcharacteristic condition reduces to $F<4$ in this case we see that the
solitary wave must have unstable essential spectrum.  This is verified numerically in Figure \ref{venant_evan_homoclinic}(e),
where again we approximated the solitary wave with a periodically extended version of itself with very large
period and used package SpectrUW.  However, by performing a time evolution study of the generated
homoclinic profile we find that the nature of this instability is seen to be convective.  That is, a small perturbation
leaves the original profile relatively unchanged in shape, but grows
as an oscillatory time-exponentially growing Gaussian wave packet
after emerging from the profile; see Figure \ref{time_evol}.  For this numerical study, we used a Crank-Nicholson finite difference
scheme, i.e. we used a forward difference time derivative and a centered, averaged spatial derivative approximation.
This solitary wave is thus seen to be metastable in the sense that the perturbed solitary wave propagates relatively
unchanged down the ramp, while shedding oscillatory instabilities in its wake.  In the next section, we further
examine this phenomenon and its relation to the stability of nearby periodic waves.
\begin{figure}[htbp]
\begin{center}
$
\begin{array}{lr}
  (a)  \includegraphics[scale=.25]{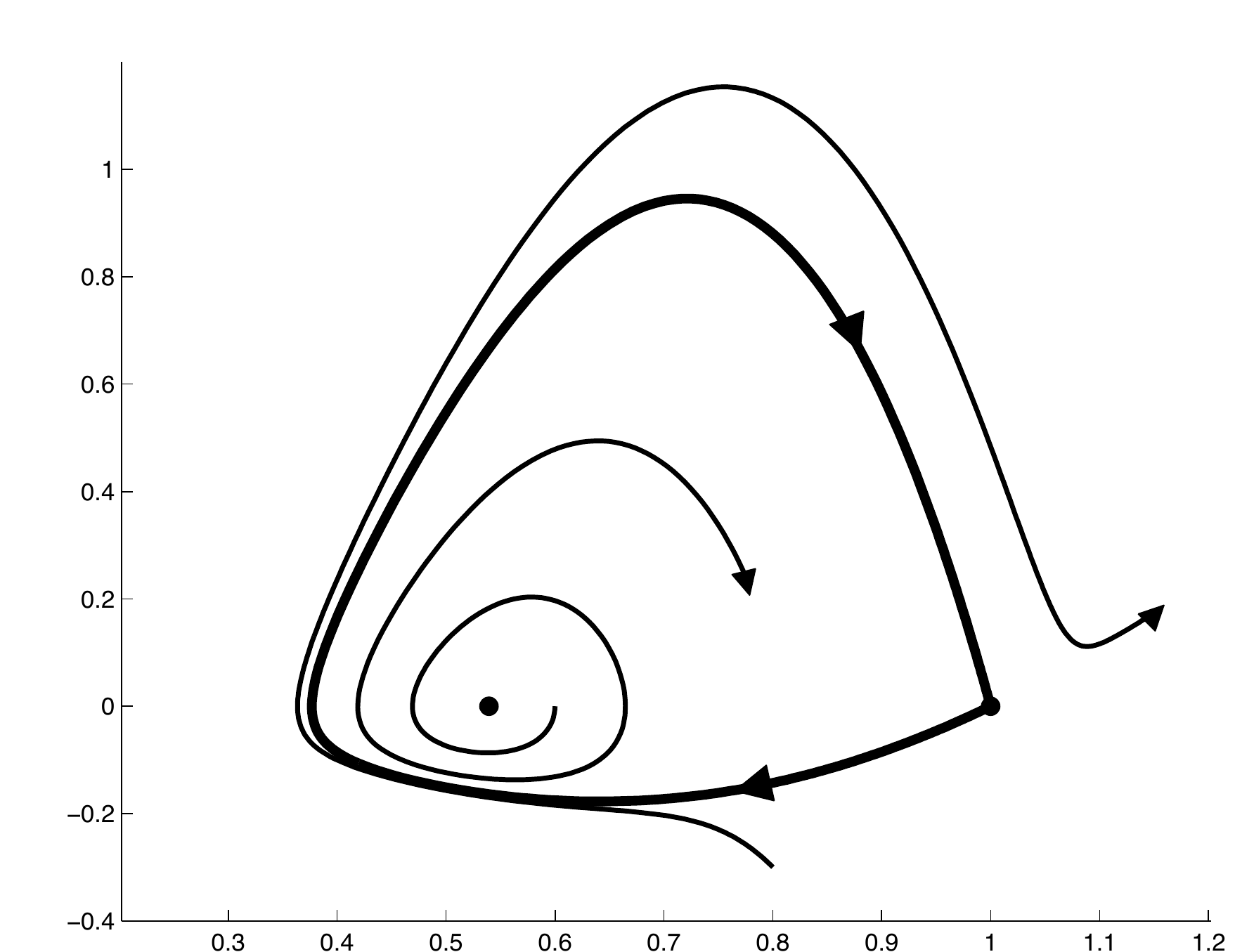} \quad 
  (b) \includegraphics[scale=0.25]{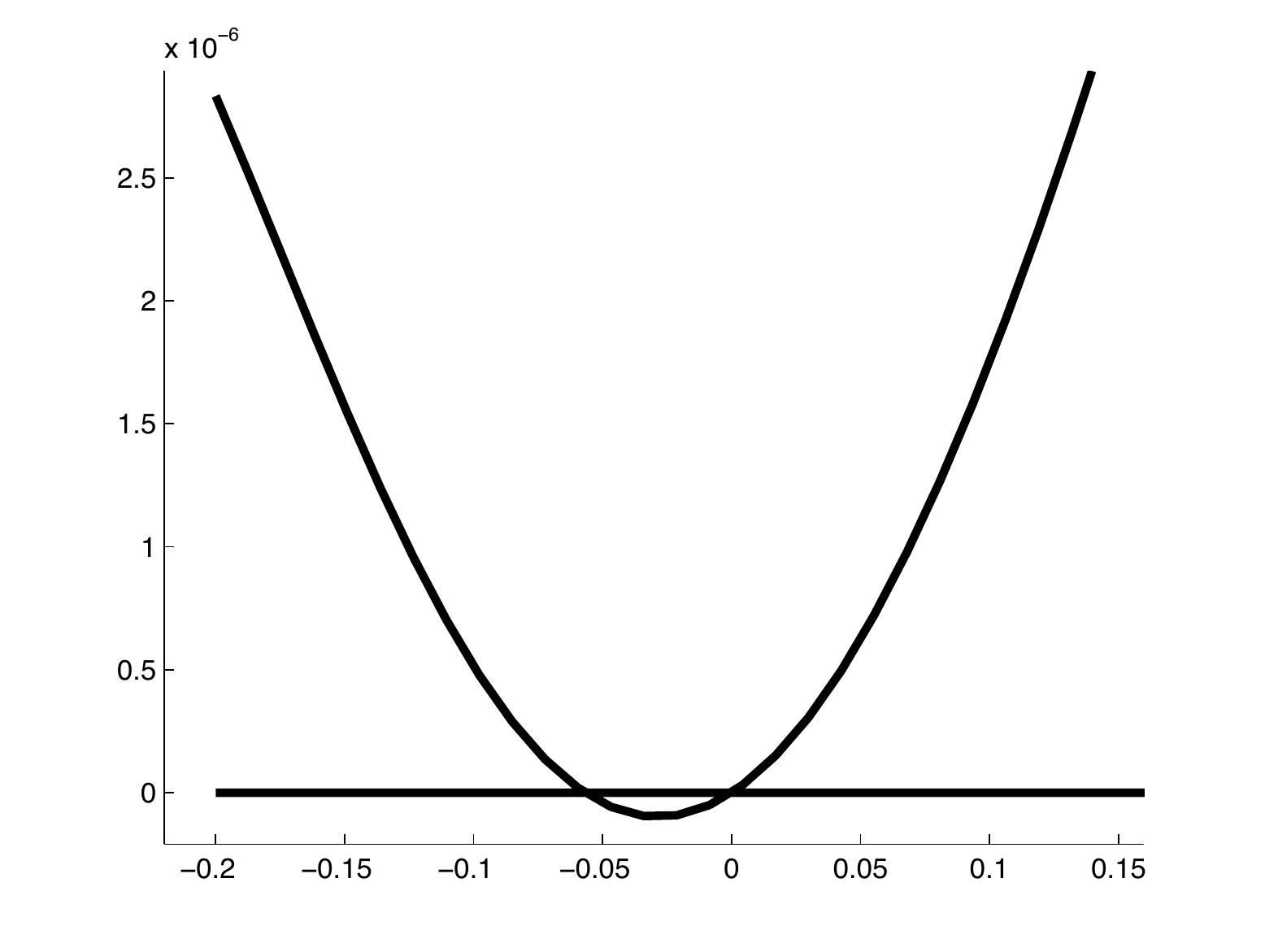}\quad (c)  \includegraphics[scale=.25]{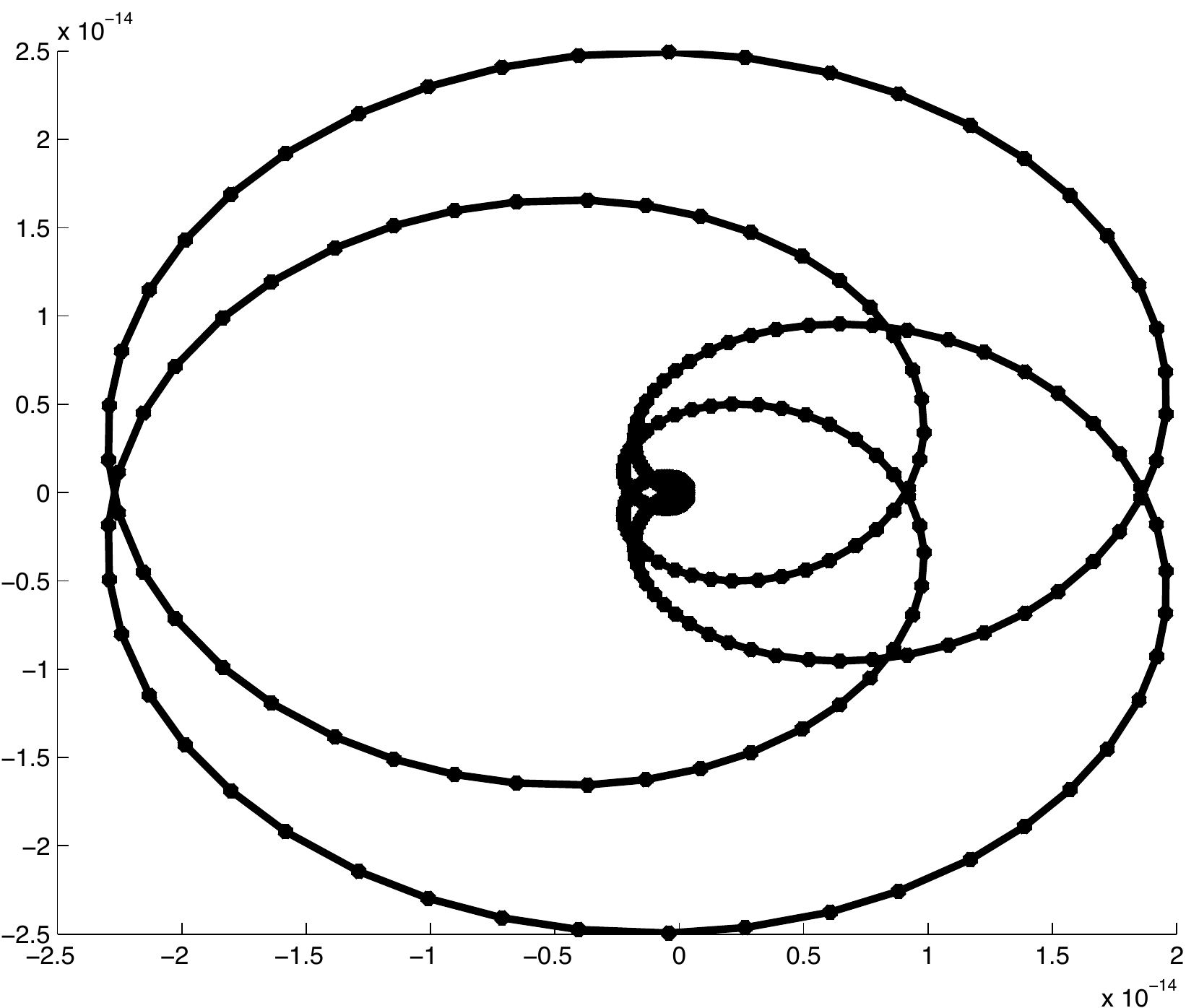}\\
  (d) \includegraphics[scale=0.25]{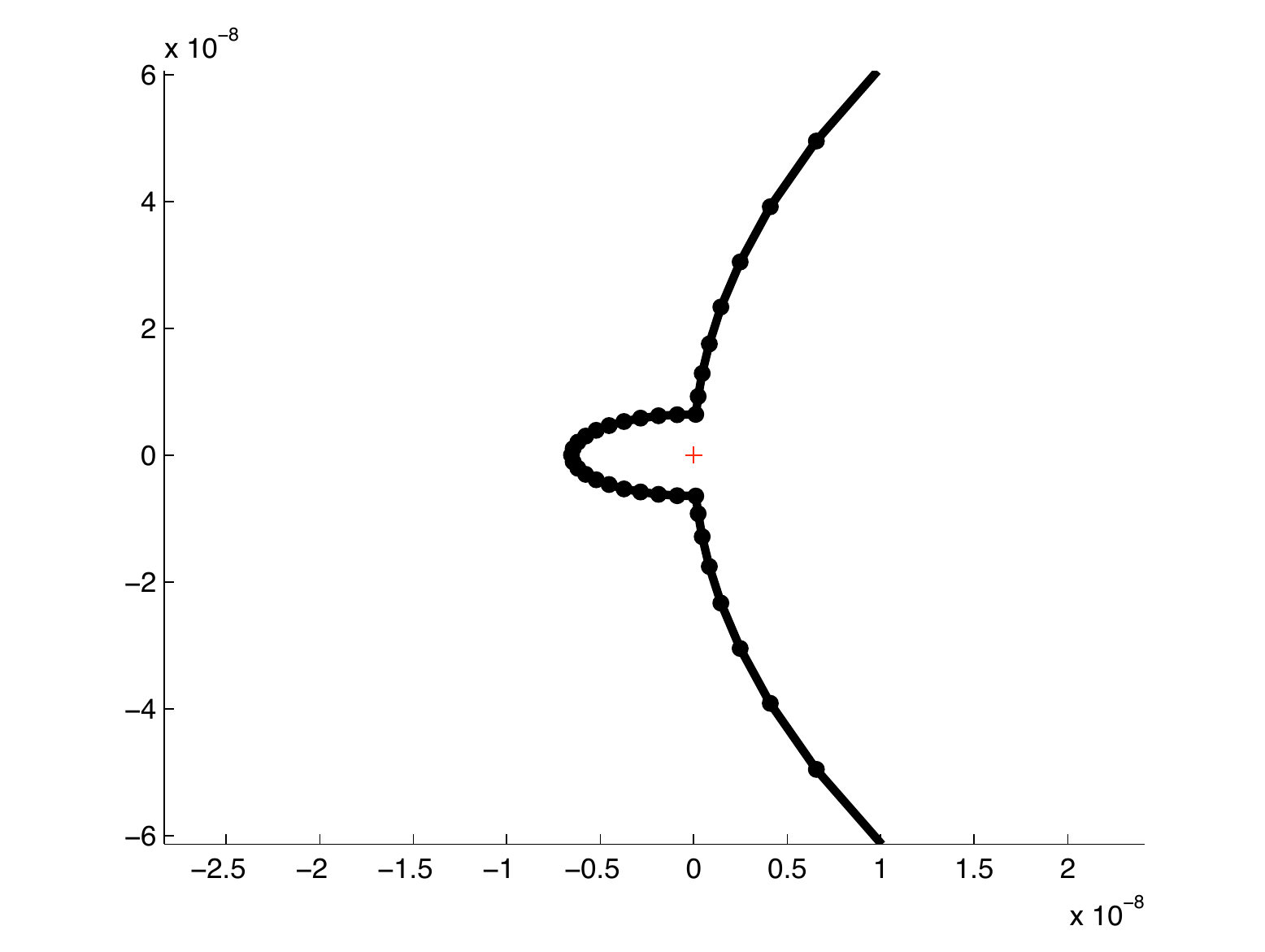}\quad (e)\includegraphics[scale=0.25]{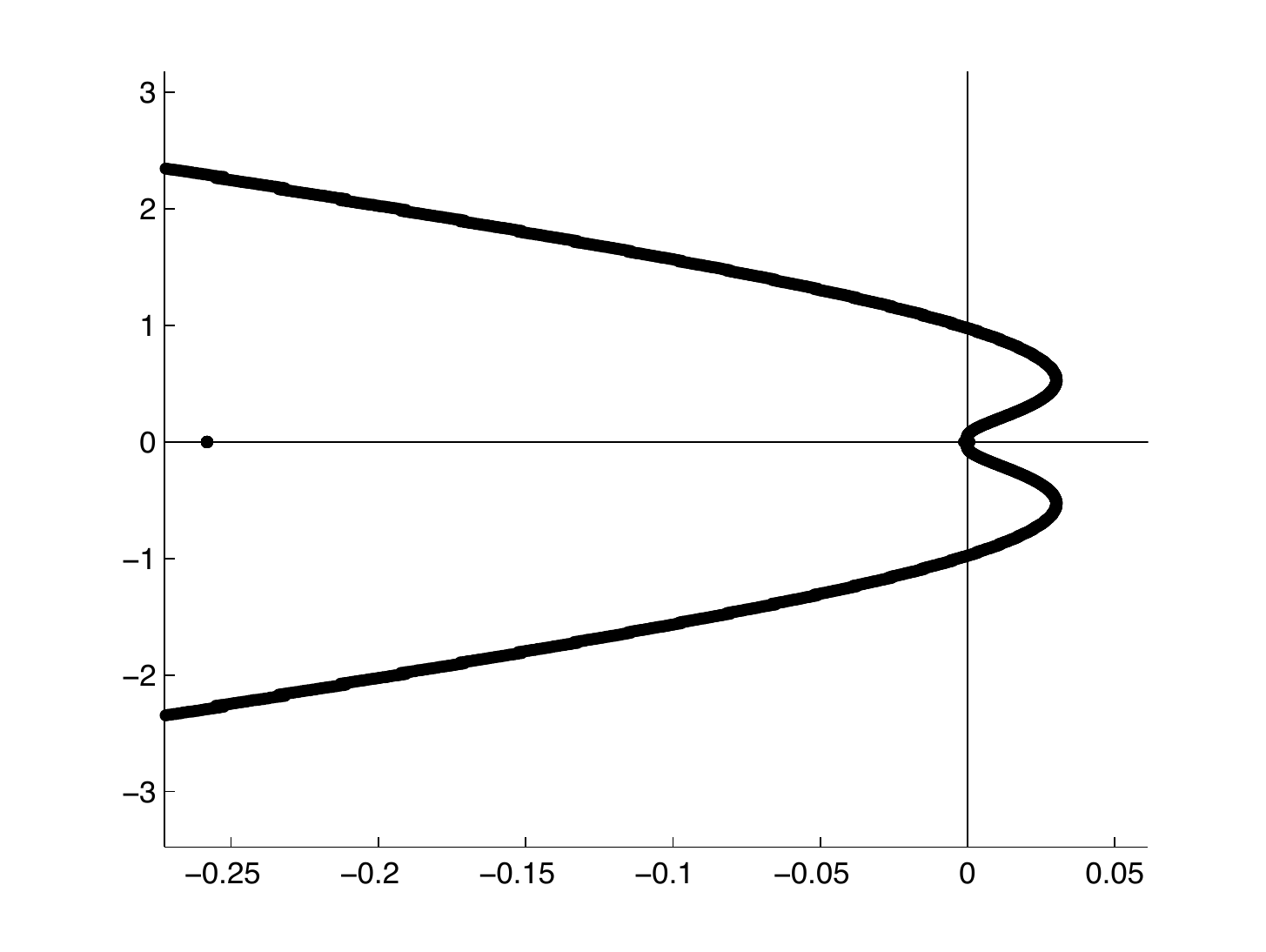}
 \end{array}
 $
\end{center}
  \caption{Evans function ouput for St. Venant system with $r=2$ and $s=0$ corresponding to a homoclinic profile. Here $c\approx 0.7849$, $q=1+c$, $F=9$, and
  $\nu=0.1$. In Figure (a) we have the phase portrait plotting $\tau$ verse $\tau'$.  In Figure (b) we plot the Evans function evaluated on the real line. 
  In Figures (c) we show the Evans function output evaluated on a semicircle of radius $R=308$ with a small inner circle of radius $10^{-3}$, where
  we zoom in on the origin in Figure (d). The winding number is zero in these computations.  Finally, Figure (e) depicts our approximation of the essential spectrum
  as generated by SepctrUW.}
\label{venant_evan_homoclinic}
\end{figure}

\begin{figure}[htbp]
\begin{center}
$
\begin{array}{lr}
  (a)  \includegraphics[scale=.25]{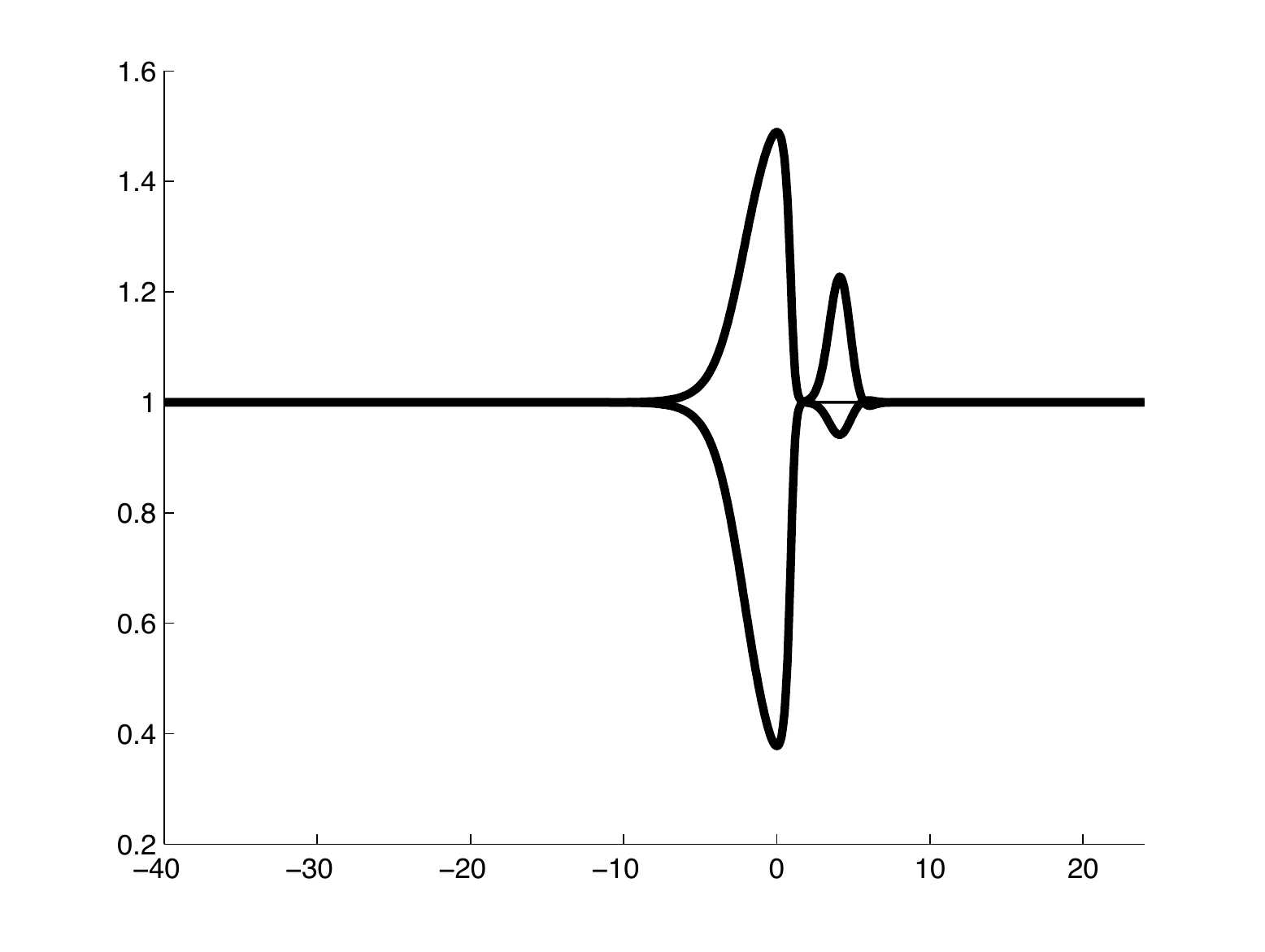}&(b) \includegraphics[scale=0.25]{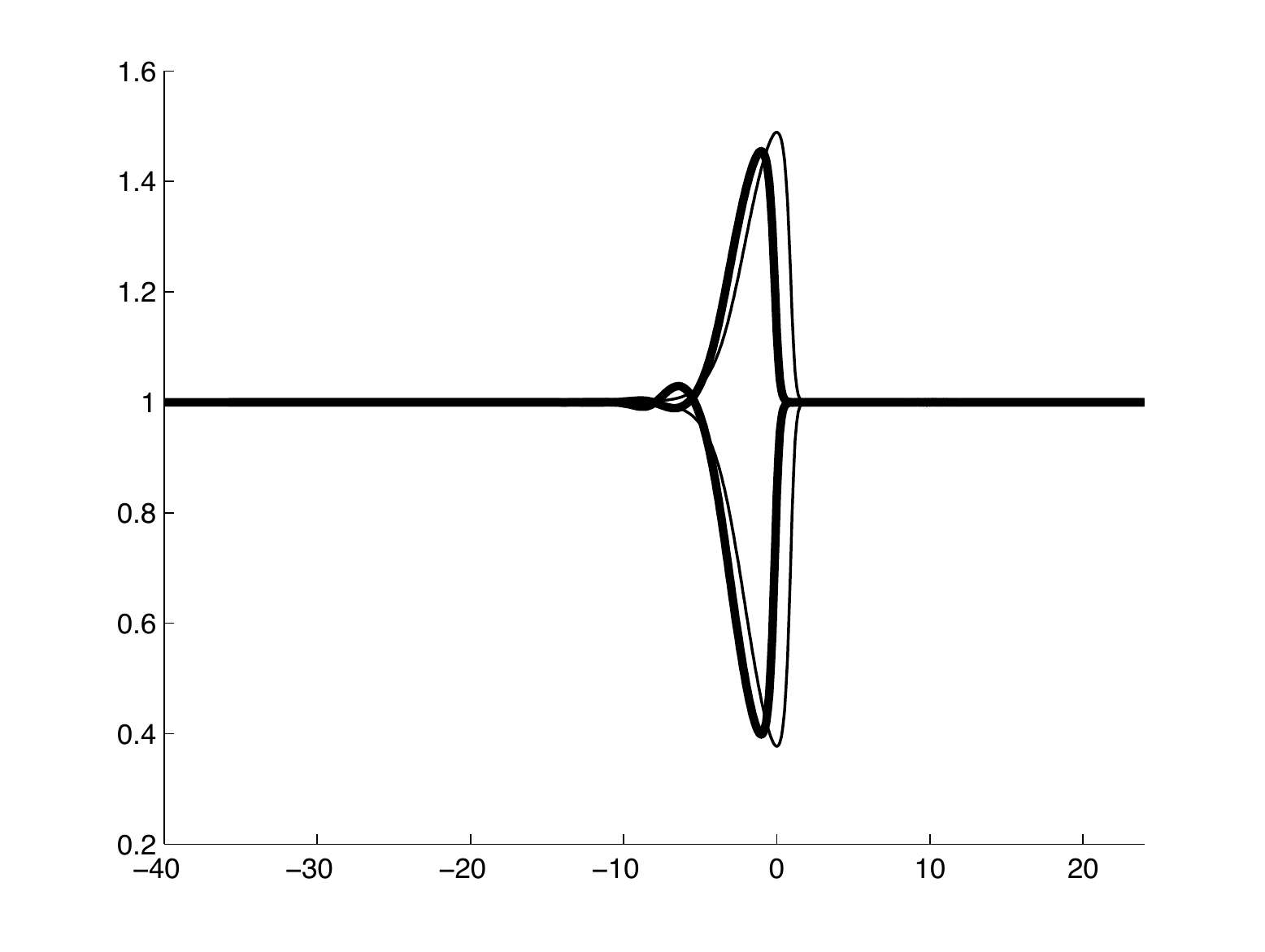}\\
  (c)  \includegraphics[scale=.25]{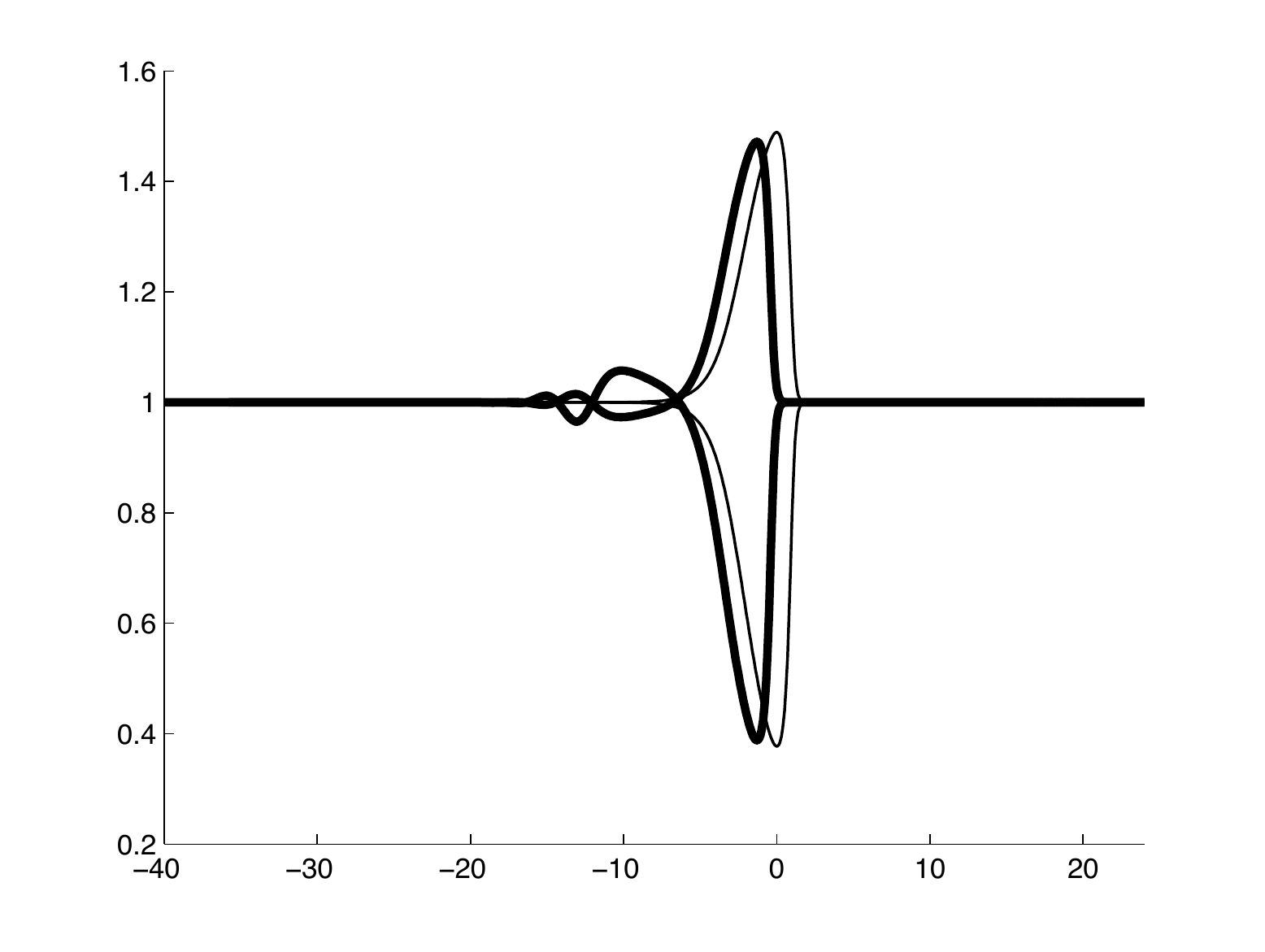}&(d) \includegraphics[scale=0.25]{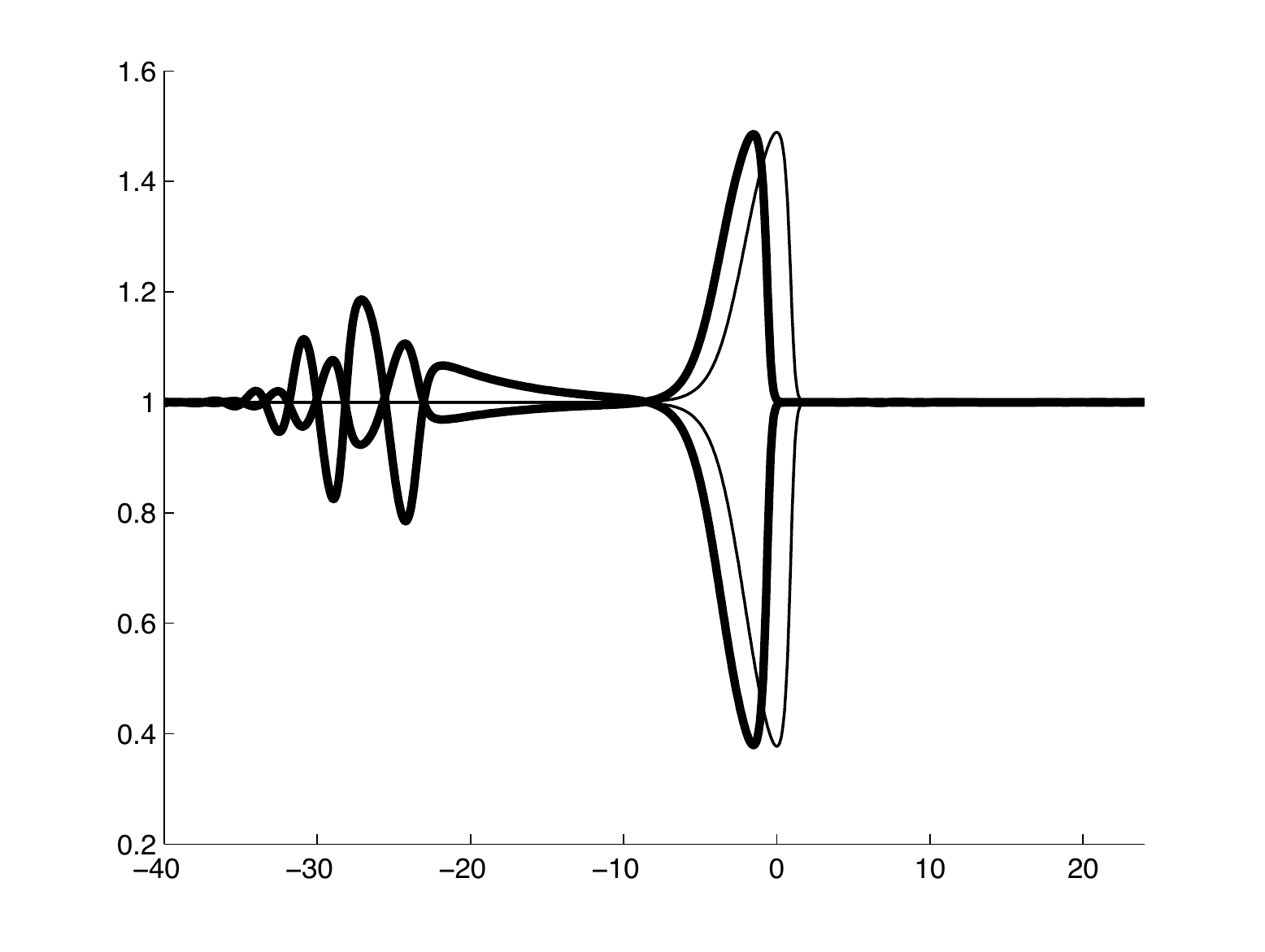}
 \end{array}
 $
\end{center}
  \caption{Time evolution
snap shots for St. Venant equations with $r=2$, $s=0$.
In Figure~(a) we have a perturbation to the right of the profile solution. In figure (b) we see the perturbation moving left emerge from the profile and grow as Gaussian curves displayed in Figures (c) and (d). The solution settles to a translate of the profiles (thin lines). Here $c\approx 0.7849$, $q=1+c$, $F=9$, and $\nu=0.1$.    }
 \label{time_evol}
\end{figure}

\section{Dynamical stability and stabilization of unstable waves}\label{s:dyn}

As noted in the introduction, similar metastable phenomena to that observed in Section \ref{Case2} have been observed
by Pego, Schneider, and Uecker \cite{PSU} for the related
fourth-order diffusive Kuramoto-Sivashinsky model
\be\label{KSeq}
u_t+\partial_x^4u+\partial_x^2u+\frac{\partial_x u^2}{2}=0,
\ee
an alternative model for thin film flow down a ramp.
They describe asymptotic behavior of solutions of this model as dominated
by trains of solitary pulses.  
%
Indeed, such wave trains are easily observed on any rainy day in runoff
down a rough asphalt gutter
as found by many roads in the U.S.
(e.g., near the last author's home), as are small oscillations
between pulses as might be suggested by the analyses here and in \cite{PSU},
corresponding to convective instabilities.

Thus, both of these sets of results
may be regarded as partial explanation of the somewhat surprising
phenomenon that asymptotic behavior of actual inclined thin-film flow
is dominated by trains of pulse solutions which are themselves unstable.
Regarding this larger issue, we have a
further observation that
we believe completes the explanation of this interesting puzzle,
at least at a level of heuristic understanding.

The key is to reformulate the question as:
how can we explain observed stable behavior of trains of solitary waves
that are in isolation exponentially unstable?
Or, more pointedly: {\it how can a train of solitary pulses
stabilize the convective instabilities shed from their neighbors?}
Phrased in this way, the question essentially answers itself: it must be
that the local dynamics of the waves
are such that convected perturbations
are {\it diminished} as they cross each solitary pulse,
counterbalancing the growth experienced as they traverse the
interval between pulses, on which they behave as perturbations
of an unstable constant solution.
%
This diminishing effect is clearly apparent visually in time-evolution studies;
see Figure \reff{time_evol}.
However, it is a challenge to quantify it mathematically.  In particular, it is
only partially but not wholly encoded by the point spectrum that we
usually think of as determining local dynamics of the wave \cite{He,GZ,ZH}.

Rather, the relevant entity
appears to be the {\it dynamic spectrum}, defined
as the spectrum of the periodic-coefficient linearized operator about
 the periodic wave obtained by pasting together copies of a suitably
truncated solitary pulse.
Here, the choice of truncation is not uniquely specified, but should
intuitively be at a point where the wave profile has ``almost converged''
to its limiting endstate.
This dynamic spectrum would govern the behavior of an arbitrarily closely
spaced array of solitary pulses, so captures the diminishing property
if there is one.
A bit of thought reveals the difficulty of trying to capture diminishing
instead by decrease in some specified norm.  For, the decay we are expecting is
the diffusive decay of a solution of a heat equation, which does not occur
at any specified rate when considered from a given norm to itself, but
shows up in long-time averaged behavior from a more localized
(e.g., $L^1$) norm to a less localized (e.g., $L^2$ or $L^\infty$) norm,
and whose progress is difficult to measure by a ``snapshot'' at
an intermediate stage.

In Figure \reff{dynamic_spectrum} we depict the dynamic spectrum of the
profile used in the time evolution study of Section \ref{Case2}, along with
the periodically extended version of this profile used to carry out the
necessary numerics.
We see that the dynamic spectrum indeed appears to be quite stable,
despite the instability of the essential spectrum of the linearized
operator about the wave (corresponding to the convective instabilities
discussed above), supporting our picture of pulse profiles as
``de-amplifiers'' that can mutually stabilize each other when placed
in a sufficiently closely spaced wave train.

\begin{figure}[htbp]
\begin{center}
$
\begin{array}{lr}
  (a) \includegraphics[scale=0.35]{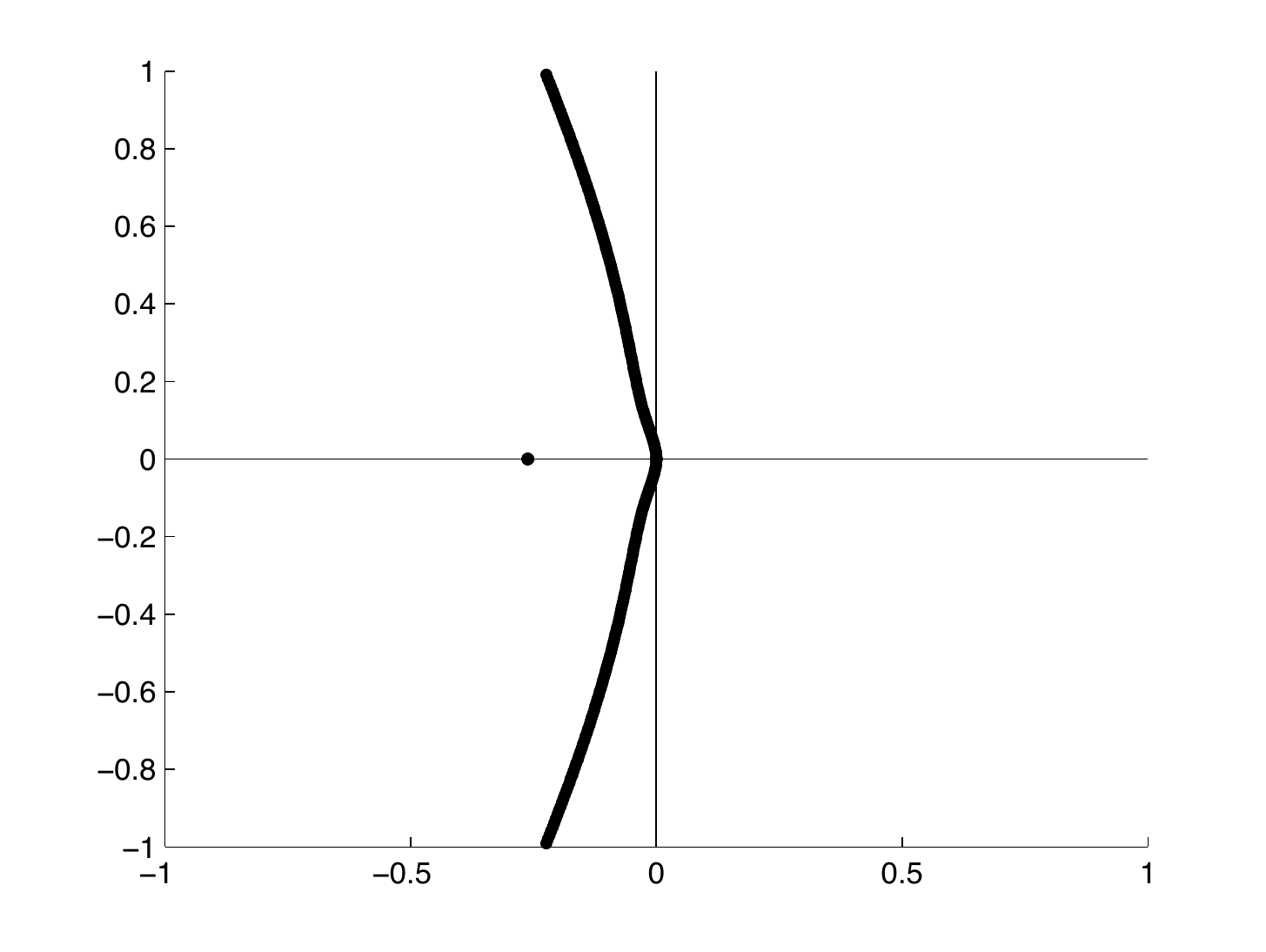} \quad (b)\includegraphics[scale=0.25]{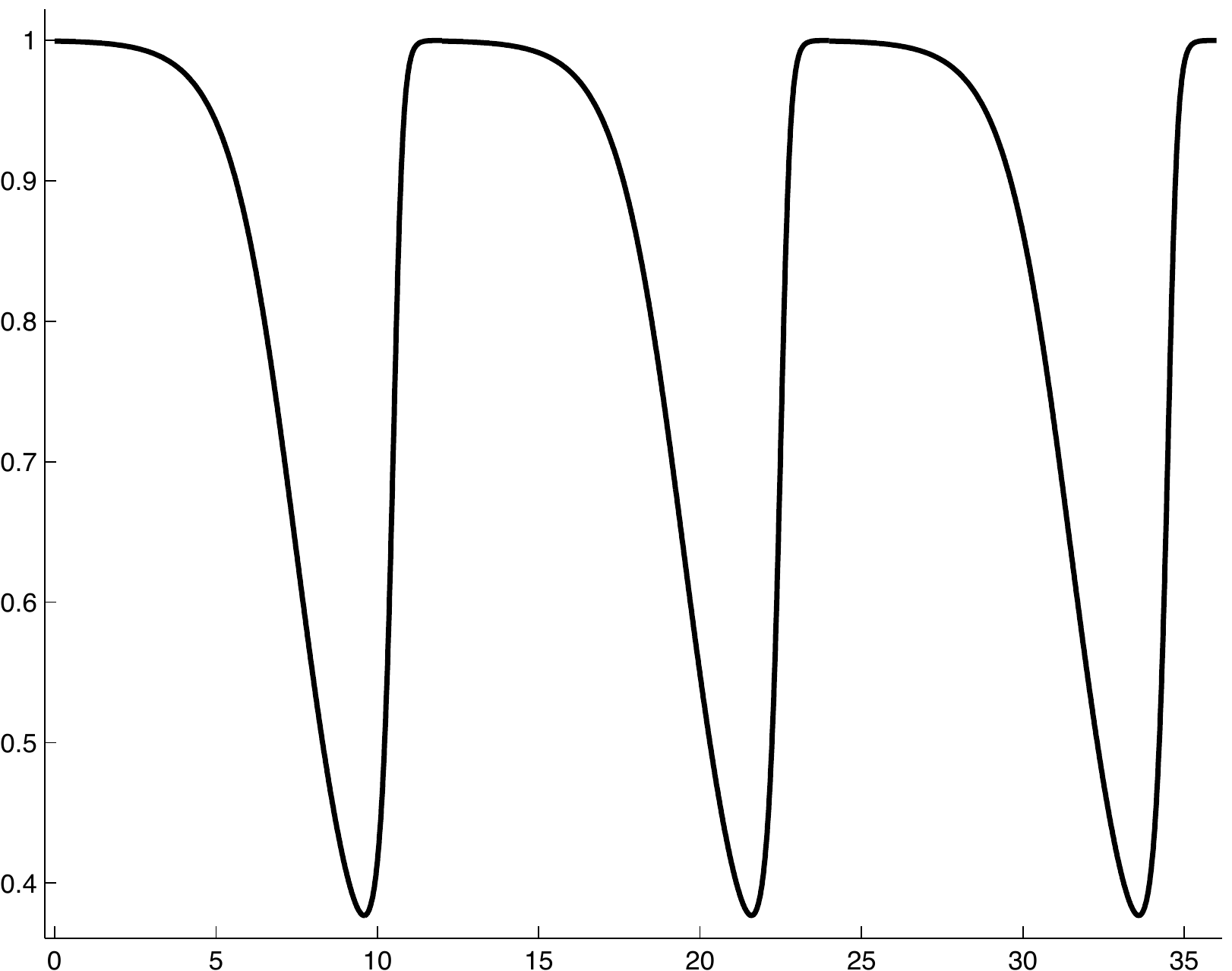}
 \end{array}
 $
\end{center}
  \caption{
In Figure (a), we plot the periodic spectrum of the periodically extended
version of the profile found in Section \ref{Case2} given in Figure (b).
This dynamic spectrum of the associated homoclinic profile is seen
to be stable, in contrast to its essential spectrum which was seen
to be unstable in Figure \ref{venant_evan_homoclinic}.
This spectrum was found using again the SpectrUW package developed at University of
Washington.  Note that the dynamic spectrum includes also small (stable) loops close to the point
spectrum of the homoclinic \cite{G}.
}
\label{dynamic_spectrum}
\end{figure}

This suggests also that there should exist nearby periodic wave trains
that are stable, despite the instability of a single solitary wave.
In companion papers \cite{BJNRZ1,BJNRZ2}, we show that this is indeed the case,
establishing spectral, linearized, and nonlinear stability of periodic
wave trains in a band near the homoclinic limit.
We point to the dynamic spectrum, or
similar quantification of de-amplification properties, as an
interesting direction for further development;
see \cite{BJNRZ1,BJNRZ2} for further discussion of this and related topics.
A further interesting direction for future study would be to
obtain pointwise Green function
bounds as in \cite{JZN,MaZ3}
for the metastable case, generalizing the bounds obtained in
\cite{OZ1} for perturbations of unstable constant solutions.
A related problem is to obtain rigorous nonlinear instability results
in the metastable case.
(Recall
that essential instabilitiy does not immediately imply nonlinear
instability due to the absence of a spectral gap.)
\appendix

\section{High Frequency Bounds}

Here, we record the quantities necessary to compute the high frequency bounds of Corollary~\ref{HFcorollary} in terms
of the original homoclinic orbit and of quantity $\bar\alpha$ defined in \eqref{alphaII}.
\begin{align*}
\Theta^0_{++}&=  \bp \frac{\bar\alpha\bar\tau^2}{c\nu}&-\frac{\bar\tau^2}{\nu}\\
-\frac{\bar\alpha\bar\tau^2}{2\nu}&\frac{\bar\tau'}{\bar\tau}-\frac{c\bar\tau^2}{2\nu}\ep,
\quad \Theta^1_{+-}= \bp
-\frac{2\bar\tau'}{\sqrt{\nu}}+\frac{\bar\tau^3}{\nu^{3/2}}(\frac{\bar\alpha}{c}+c)\\
-\frac{r\bar\tau^{s+2}\left(q-c\bar\tau\right)^{r-1}}{2\sqrt{\nu}}-\frac{\bar\alpha}{2}\frac{\bar\tau^3}{\nu^{3/2}}
\ep,
\\
\Theta^1_{++}&= \bp 0&
-\frac{2\bar\tau'}{\sqrt{\nu}}+\frac{\bar\tau^3}{\nu^{3/2}}(\frac{\bar\alpha}{c}+c)\\
\frac{\bar\tau}{2\sqrt{\nu}}\left((s+1)\bar\tau^s\left(q-c\bar\tau\right)^r+c\bar\alpha\frac{\bar\tau^2}{\nu}\right)&
\frac{r\bar\tau^{s+2}\left(q-c\bar\tau\right)^{r-1}}{2\sqrt{\nu}}+\frac{\bar\alpha}{2}\frac{\bar\tau^3}{\nu^{3/2}}
\ep,
\\
\Theta^2_{++}&=\bp -\frac{(s+1)\bar\tau^{s+2}\left(q-c\bar\tau\right)^r}{\nu}-\frac{c\bar\alpha}{\sqrt{\nu}}\frac{\bar\tau^3}{\nu^{3/2}}&
-\frac{r\bar\tau^{s+3}\left(q-c\bar\tau\right)^{r-1}}{\nu}\\
0&\frac{(s+1)\bar\tau^{s+2}\left(q-c\bar\tau\right)^r}{2\nu}+\frac{c\bar\alpha}{2\sqrt{\nu}}\frac{\bar\tau^3}{\nu^{3/2}}
\ep,
\\
\Theta^3_{++}&=\bp0&-\frac{(s+1)\bar\tau^{s+3}\left(q-c\bar\tau\right)^r}{\nu^{3/2}}-\frac{c\bar\alpha}{\sqrt{\nu}}\frac{\bar\tau^4}{\nu^2}\\
0&0
\ep,\quad
\Theta^0_{+-}= \bp \frac{\bar\tau^2}{\nu}\\
\frac{\bar\tau'}{2\bar\tau}-\frac{c}{2}\frac{\bar\tau^2}{\nu}\ep,
\\
\Theta^2_{+-}&= \bp
\frac{r\bar\tau^{s+3}\left(q-c\bar\tau\right)^{r-1}}{\nu}\\
\frac{(s+1)\bar\tau^{s+2}\left(q-c\bar\tau\right)^r}{2\nu}+\frac{c\bar\alpha}{2\sqrt{\nu}}\frac{\bar\tau^3}{\nu^{3/2}}
\ep,\quad
\Theta^3_{+-}= \bp
-\frac{(s+1)\bar\tau^{s+3}\left(q-c\bar\tau\right)^r}{\nu^{3/2}}-\frac{c\bar\alpha}{\sqrt{\nu}}\frac{\bar\tau^4}{\nu^2}\\
0
\ep,
\\
\Theta^0_{-+}&=  \bp \frac{\bar\alpha\bar\tau^2}{2\nu}&
\frac{\bar\tau'}{\bar\tau}-\frac{c\bar\tau^2}{2\nu}  \ep,\quad
\Theta^2_{-+}= \bp
0&\frac{(s+1)\bar\tau^{s+2}\left(q-c\bar\tau\right)^r}{2\nu}+\frac{c\bar\alpha}{2\sqrt{\nu}}\frac{\bar\tau^3}{\nu^{3/2}}
\ep,
\\
\Theta^1_{-+}&=  \bp
\frac{\bar\tau}{2\sqrt{\nu}}\left((s+1)\bar\tau^s\left(q-c\bar\tau\right)^r+c\bar\alpha\frac{\bar\tau^2}{\nu}\right)&
\frac{r\bar\tau^{s+2}\left(q-c\bar\tau\right)^{r-1}}{2\sqrt{\nu}}+\frac{\bar\alpha}{2}\frac{\bar\tau^3}{\nu^{3/2}}\ep,\quad
\Theta^0_{--}=\bp\frac{\bar\tau'}{\bar\tau}-\frac{c\bar\tau^2}{2\nu}\ep,
\\
\Theta^1_{--}&= \bp-\frac{(s+1)\bar\tau^{s+2}\left(q-c\bar\tau\right)^r}{2\nu}+\frac{c\bar\alpha}{2\sqrt{\nu}}\frac{\bar\tau^3}{\nu^{3/2}}\ep,\quad
\Theta^2_{--}=\bp\frac{(s+1)\bar\tau^{s+2}\left(q-c\bar\tau\right)^r}{2\nu}+\frac{c\bar\alpha}{2\sqrt{\nu}}\frac{\bar\tau^3}{\nu^{3/2}}\ep.
\end{align*}


\end{document}